\documentclass[reqno,11pt]{amsart}

\usepackage{hyperref}
\usepackage{amscd, amssymb , amsmath,epsfig,subfig,fullpage}
\usepackage{cancel}
\usepackage{mathrsfs}
\usepackage{epsfig}
\usepackage{color}
\definecolor{brightlavender}{rgb}{0.75, 0.58, 0.89}
\definecolor{carnelian}{rgb}{0.7, 0.11, 0.11}
\usepackage[all]{xy}
\usepackage{verbatim}

\newcommand{\ptr}{\operatorname{ptr}}
\newcommand{\tr}{\operatorname{tr}}
\newcommand{\et}{\quad\text{and}\quad} 
\newtheorem{Df}{Definition}
\newtheorem{definition}[Df]{Definition}
\newtheorem{theorem}[Df]{Theorem}

\newtheorem{proposition}[Df]{Proposition}
\newtheorem{lemma}[Df]{Lemma}

\newtheorem{remark}[Df]{Remark}

\newtheorem{corollary}[Df]{Corollary}
\newtheorem{conjecture}[Df]{Conjecture}
\newcommand{\tcoev}{\stackrel{\longleftarrow}{\operatorname{coev}}}
\newcommand{\tev}{\stackrel{\longleftarrow}{\operatorname{ev}}}
\newcommand{\ev}{\stackrel{\longrightarrow}{\operatorname{ev}}}
\newcommand{\coev}{\stackrel{\longrightarrow}{\operatorname{coev}}}

\newcommand{\diag}{\operatorname{diag}}
\newcommand{\bp}[1]{{\left(#1\right)}}

\newcommand{\oo}{{\infty}}
\newcommand{\ro}{{r}}
\newcommand{\rk}{{n}}
\newcommand{\Uqg}{\ensuremath{\mathcal U}}
\newcommand{\UqgH}{\ensuremath{\Uqg^{H}}}

\newcommand{\cat}{\mathscr{C}}

\newcommand{\Hh}{H\hspace{-1ex}\,H^h}

\newcommand{\La}{{L_W}}
\newcommand{\Proj}{{\mathsf{Proj}}}

\newcommand{\Int}{\operatorname{Int}}
\newcommand{\I}{\mathcal{I}}

\newcommand{\ang}[1]{{\left\langle{#1}\right\rangle}}

\renewcommand{\SS}{\Sigma}

\newcommand{\Ffun}{\ensuremath{\mathsf{F}}}

\newcommand{\qt}{\operatorname{\mathsf{t}}}

\newcommand{\ideal}{\I}

\newcommand{\wtb}{\mu}
\newcommand{\h}{\ensuremath{\mathfrak{h}}}
\newcommand{\roots}{\Delta}
\newcommand{\e}{\operatorname{e}}
\newcommand{\wb}{\overline}
\newcommand{\wt}{\widetilde}
\newcommand{\wh}{\widehat}
\newcommand{\Spec}{\operatorname{Spec}}
\newcommand{\Graph}{\operatorname{Gr}}

\newcommand{\Gr}{\ensuremath{\mathcal{G}}}
\newcommand{\D}{\ensuremath{\mathcal{D}}}
\newcommand{\X}{\ensuremath{\mathcal{X}}}

\newcommand{\C}{\ensuremath{\mathbb{C}}}
\newcommand{\Z}{\ensuremath{\mathbb{Z}}}
\newcommand{\R}{\ensuremath{\mathbb{R}}}
\newcommand{\N}{\ensuremath{\mathbb{N}}}
\newcommand{\Uhg}{\ensuremath{U_{h}(\g)}}
\newcommand{\g}{\ensuremath{\mathfrak{g}}}
\newcommand{\slt}{\ensuremath{\mathfrak{sl}(2)}}

\newcommand{\M}{\ensuremath{\mathcal{M}}}

\newcommand{\End}{\operatorname{End}}
\newcommand{\Hom}{\operatorname{Hom}}
\newcommand{\Aut}{\operatorname{Aut}}

\newcommand{\unit}{\ensuremath{\mathbb{I}}}
\newcommand{\Id}{\operatorname{Id}}
\newcommand{\qdim}{\operatorname{qdim}}
\newcommand{\FK}{{\Bbbk}}
\newcommand{\qn}[1]{{\left\{#1\right\}}}
\newcommand{\qN}[1]{{\left[#1\right]}}
\newcommand{\qd}{\operatorname{\mathsf{d}}}

\newcommand{\A}{\ensuremath{\mathsf{A}}}

\newcommand{\sll}{\mathfrak{sl}}

\newcommand{\LL}{\mathcal{L}}

\newcommand{\bb}{\operatorname{\mathsf{b}}}

\newcommand{\T}{{\mathcal T}}

\newcommand{\epsh}[2]
         {\begin{array}{c} \hspace{-1.3mm}
        \raisebox{-4pt}{\epsfig{figure=#1,height=#2}}
        \hspace{-1.9mm}\end{array}}

        \linespread{1.1}
        \addtolength{\textwidth}{-8pt}
\addtolength{\hoffset}{4pt}

\newcounter{exo} \newcounter{numexercice}
\renewcommand{\theexo}{\arabic{exo}}

\begin{document}

\title
{The trace on projective representations of quantum groups}

\author[N. Geer]{Nathan Geer}
\address{Mathematics \& Statistics\\
  Utah State University \\
  Logan, Utah 84322, USA}
\email{nathan.geer@gmail.com}

\author[B. Patureau-Mirand]{Bertrand Patureau-Mirand}
\address{Univ. Bretagne - Sud,  UMR 6205, LMBA, F-56000 Vannes, France}
\email{bertrand.patureau@univ-ubs.fr}

\date{\today}

\begin{abstract}
For certain roots of unity, we consider the categories of weight modules over three quantum groups:  small, un-restricted and unrolled.  The first main theorem of this paper is to show that there is a modified trace on the projective modules of the first two categories.  The second main theorem is to show that category over the unrolled quantum group is ribbon.  Partial results related to these theorems were known previously.    
\end{abstract}

\maketitle
\setcounter{tocdepth}{3}

\section*{Introduction}
For an odd ordered root of unity $\xi$ and lattice $L$, let
$\Uqg_\xi^L$, $\UqgH_\xi$ and $\wb\Uqg_\xi^L$ be the unrestricted,
unrolled and small quantum groups, respectively (see Section
\ref{S:RepQG}).  Let $\cat_{odd}$ (resp. $\cat^H_{odd}$,
resp. $\wb\cat_{odd}$) be the category of $\Uqg_\xi^L$ (resp. $\UqgH$,
resp. $\wb\Uqg_\xi^L$) weight modules.
The usual construction of quantum invariants do not directly apply to
these categories because of the following obstructions: the categories
are not semi-simple and have vanishing quantum dimensions.  Partial
results overcoming these obstructions have been obtained in
\cite{CGP3, GKP1, GKP2, GP3, GPT1, GPV}.  In this paper we generalize
some of these results using a new concept called generically
semi-simple (loosely meaning the category is graded and semi-simple on
a dense portion of the graded pieces).  In \cite{BBG, BGPR}, the
results of this paper will be used in future work to construct
topological invariants.

The first part of this paper contains a general theory with two main
theorems which extend properties of generic simple objects to general
properties in the full category.  The first (Theorem~\ref{T:GenTwist})
loosely says that if a category is generically semi-simple, pivotal
and braided such that there is a twist for every generic simple object
then the full category has a twist and so the category is ribbon.  The
second (Theorem \ref{T:GenericDimTrace}) loosely says that if a
category is generically semi-simple and pivotal with a right trace on
its projective objects whose modified dimension satisfies
$\qd(V)=\qd(V^*)$ for all generic simple objects $V$ then the right
trace is a (two sided) trace.

We apply these theorems to the categories of modules over the different
quantum groups mentioned above.  It was previously known that
$\cat^H_{odd}$ is a braided pivotal category and $\cat_{odd}$ is a
pivotal category with a right trace, see \cite{GP3} and \cite{GKP2},
respectively.  
Here we remark that $\cat^H_{odd}$ and $\cat_{odd}$ are generically
semi-simple (see Subsection \ref{SS:SemiSimple}).  In Subsection
\ref{SS:RibbonStr} we show that each generic simple module in
$\cat^H_{odd}$ has a twist and thus Theorem~\ref{T:GenTwist} implies
$\cat^H_{odd}$ is ribbon.  Subsection \ref{SS:modifiedDim} contains a
proof that the modified dimension satisfies $\qd(V)=\qd(V^*)$ for all
generic simple objects $V$ of $\cat_{odd}$ and so Theorem
\ref{T:GenericDimTrace} implies the unique right trace (up to global
scalar) on the ideal of projective modules of $\cat_{odd}$ is a trace.
This unique trace (up to global scalar) induces a trace on
$\wb\cat_{odd}$.

The main theorems in this paper about $\cat^H_{odd}$ and $\cat_{odd}$
use deep results developed by De Concini, Kac, Procesi, Reshetikhin
and Rosso in the series of papers \cite{DK,DKP,DKP2,DPRR}.  In
particular, we use the quantum co-adjoint action.  In general, these
results hold for odd ordered roots of unity.  However, in Section
\ref{SS:ccl} we show directly that in the case of $\slt$ the results
discussed above hold for even and odd ordered roots of unity.  In
particular, we show $\cat_{\slt}$ is braided and $\cat^H_{\slt}$ and
$\wb\cat_{\slt}$ have unique traces.  Finally, in Section
\ref{S:ConjEven} we conjecture that Subsections \ref{SS:RibbonStr} and
\ref{SS:modifiedDim} generalize to even ordered root of unity.

\section{Tensor categories}\label{S:SphPsi}
In this section we give the basic definitions for tensor categories
and the functors induced by graphic calculus.

\subsection{Pivotal and ribbon categories}\label{SS:LinearCat}
We recall the definition of pivotal and ribbon tensor categories, for more details see
for instance, \cite{BW}.  In this paper, we consider strict tensor
categories with tensor product $\otimes$ and unit object $\unit$.  Let
$\cat$ be such a category.  
The notation $V\in \cat$
means that $V$ is an object of $\cat$.
 
The category $\cat$ is a \emph{pivotal category} if it has duality morphisms 
$$\coev_{V} : \:\:
\unit \rightarrow V\otimes V^{*} , \quad \ev_{V}: \:\: V^*\otimes V\rightarrow
\unit , \quad \tcoev_V: \:\: \unit \rightarrow V^{*}\otimes V\quad {\rm {and}}
\quad \tev_V: \:\: V\otimes V^*\rightarrow \unit$$ which satisfy compatibility
conditions (see for example \cite{BW, GKP2}).   In particular, the left dual and right dual of a morphism $f\colon V \to W$ in $\cat$ coincide:
\begin{align*}
f^*&= (\ev_W \otimes  \Id_{V^*})(\Id_{W^*}  \otimes f \otimes \Id_{V^*})(\Id_{W^*}\otimes \coev_V)\\
 &= (\Id_{V^*} \otimes \tev_W)(\Id_{V^*} \otimes f \otimes \Id_{W^*})(\tcoev_V \otimes \Id_{W^*}) \colon W^*\to V^*.
\end{align*}
Then there is a natural notion of right categorical (partial) trace in $\cat$: for any $V,W\in\cat$,
$$
\begin{array}{rcl}
  \tr_R:\End_\cat(V)&\to&\FK\\
  f&\mapsto&  \tev_V(f \otimes \Id_{V^*})\coev_V
\end{array}\et$$$$
\begin{array}{rcl}
\ptr_R:\End_\cat(V\otimes W)&\to&\End_\cat(V)\\
f&\mapsto & (\Id_V \otimes \tev_W)(f\otimes \Id_{W^*})(\Id_V \otimes \coev_W)
\end{array}
$$
and analogous left categorical (partial) trace $\tr_L$
(resp. $\ptr_L$) defined using $\ev$ and $\tcoev$.
\\

A \emph{braiding} on $\cat$ consists of a family of natural
isomorphisms $\{c_{V,W}: V \otimes W \rightarrow W\otimes V \}$
satisfying the Hexagon Axiom:
$$c_{U,V\otimes W}=(\Id_V\otimes c_{U,W})\circ(c_{U,V}\otimes\Id_W)\et
c_{U\otimes V,W}=(c_{U,W}\otimes \Id_V)\circ(\Id_U\otimes c_{V,W})$$
for all $U,V,W\in\cat$.  We say $\cat$ is \emph{braided} if it has a
braiding.  If $\cat$ is pivotal and braided, one can define a family
of natural automorphisms
$$\theta_V=\ptr_R(c_{V,V}):V\to V.$$
We say that $\cat$ is \emph{ribbon} and the morphism $\theta$ is a
\emph{twist} if
\begin{equation}
  \label{eq:twistc}
  \theta_{V^*}=(\theta_V)^*
\end{equation}
for all $V\in\cat$.
\begin{remark}
  An equivalent definition of a ribbon category is a braided
  left rigid balanced category.  Left rigid means that there are left
  duals $(V,V^*,\ev_V,\coev_V)$ and the balance $\theta$ is a natural
  automorphism of the identity functor satisfying $\theta_{U\otimes
    V}=(\theta_{U}\otimes \theta_V)\circ c_{V\otimes U}\circ c_{U\otimes
    V}$ and Equation~\eqref{eq:twistc}.  
\end{remark}

\subsection{$\FK$-categories}
Let $\FK$ be a integral domain.  
A \emph{$\FK$-category} is a category $\cat$ such that its hom-sets are left $\FK$-modules, the composition of morphisms is $\FK$-bilinear, and the canonical $\FK$-algebra map $\FK \to \End_\cat(\unit), k \mapsto k \, \Id_\unit$ is an isomorphism.
A \emph{tensor $\FK$-category} is a tensor category $\cat$ such that $\cat$ is a $\FK$-category and the tensor product of morphisms is $\FK$-bilinear.  
An object $V$ of $\cat$ is \emph{simple} if $\End_\cat(V)= \FK \Id_V$.
Let $V$ be an object in $\cat$ and let $\alpha:V\to W$ and $\beta:W\to V$ be morphisms.  The triple
$(V,\alpha,\beta)$ (or just the object $V$) is a \emph{retract} of $W$ if
$\beta\alpha=\Id_{V}$.  An object $W$ is a \emph{direct sum} of the finite
family $(V_i)_i$ of objects of $\cat$ if there exist retracts
$(V_i,\alpha_i,\beta_i)$ of $W$ with $\beta_i\alpha_j=0$ for $i\neq j$
and $\Id_W=\sum_i\alpha_i\beta_i$.
An object which is a direct sum of simple objects is called {\em
  semi-simple}.

\subsection{Traces on ideals in pivotal categories}\label{SS:trace}
Here we recall the definition of a (right) trace on an (right) ideal in a pivotal $\FK$-category $\cat$, for more details see \cite{GPV}.   By a \emph{right ideal} of  $\cat$ we mean a full subcategory $\ideal$ of $\cat$ such that:  
\begin{enumerate}
\item  If $V\in \ideal$ and $W\in \cat$, then $V\otimes W \in \ideal$.
\item If $V\in \ideal$ and if $W\in \cat$ is a retract of $V$,
  then $W\in \ideal$.  
\end{enumerate}
One defines similarly the notion of a \emph{left ideal} by replacing in the above definition $V\otimes W \in \ideal$ by  $W\otimes V \in \ideal$.
A full subcategory $\ideal$ of $\cat$ is an \emph{ideal} if it is both a
right and left ideal.

If $\ideal$ is a right ideal in $\cat$ 
then a \emph{right trace} on $\ideal$ is a family of linear functions
$$\{\qt_V:\End_\cat(V)\rightarrow \FK \}_{V\in \ideal}$$
such that following two conditions hold:
\begin{enumerate}
\item  If $U,V\in \ideal$ then for any morphisms $f:V\rightarrow U $ and $g:U\rightarrow V$  in $\cat$ we have 
\begin{equation*}
\qt_V(g f)=\qt_U(f  g).
\end{equation*} 
\item \label{I:Prop2Trace} If $U\in \ideal$ and $W\in \cat$ then for any $f\in \End_\cat(U\otimes W)$ we have
\begin{equation*}
\qt_{U\otimes W}\bp{f}=\qt_U \bp{\ptr_R(f)}.
\end{equation*}
\end{enumerate}
The notion of a left trace on a left ideal is obtained by
replacing \eqref{I:Prop2Trace}  in the above definition with $\qt_{W\otimes U}\bp{f}=\qt_U \bp{\ptr_L(f)}$ for all $ f\in \End_\cat(W\otimes
U)$.  A family
$\qt=\{\qt_V \}_{V\in \ideal}$ is a \emph{trace} if $\ideal$ is an
ideal and $\qt$ is both a left and right trace.

The class of projective modules $\Proj$ in $\cat$ is an ideal.   In a pivotal category projective and injective objects coincide (see \cite{GPV}).  The ideal $\Proj$ is an important example which we will consider later in this paper.
\subsection{Colored ribbon graph invariants}
Let $\cat$ be a pivotal category.  A morphism
$f:V_1\otimes{\cdots}\otimes V_n \rightarrow W_1\otimes{\cdots}\otimes
W_m$ in $\cat$ can be represented by a box and arrows:
$$ 
\xymatrix{ \ar@< 8pt>[d]^{W_m}_{... \hspace{1pt}}
  \ar@< -8pt>[d]_{W_1}\\
  *+[F]\txt{ \: \; $f$ \; \;} \ar@< 8pt>[d]^{V_n}_{... \hspace{1pt}}
  \ar@< -8pt>[d]_{V_1}\\
  \: }
$$
Boxes as above are called {\it coupons}. 
By a ribbon graph in an
oriented manifold $\SS$, we mean an
oriented compact surface embedded in $\SS$ which decomposed into
elementary pieces: bands, annuli, and coupons (see \cite{Tu}) and is the thickening of an oriented graph.  
In particular, the
vertices of the graph lying in $\Int \SS=\SS\setminus\partial \SS$ are
thickened to coupons.  A $\cat$-colored ribbon graph is a ribbon graph
whose (thickened) edges are colored by objects of $\cat$ and whose
coupons are colored by morphisms of~$\cat$.  
The intersection of a $\cat$-colored ribbon graph in $\Sigma$ with $\partial \Sigma$ is required to be empty or to consist only of vertices of valency~1.  
When $\SS$ is a surface the ribbon graph is just a tubular neighborhood of the graph.

A $\cat$-colored ribbon graph in $\R^2$ 
is
called \emph{planar}. A $\cat$-colored ribbon graph in
$S^{2}=\R^2\cup\{\infty\}$ is called \emph{spherical}.  A $\cat$-colored ribbon graph in $\R^3$ or  $\R^2\times [0,1]$ are called \emph{spatial}. 

For $i\in\{2,3\}$, the $\cat$-colored ribbon graphs in $\R^{i-1}\times [0,1]$  form a category $\Graph^i_\cat$ as follows: objects of
$\Graph^i_\cat$ are finite sequences of pairs $(X,\varepsilon)$, where $X\in
\cat$ and $\varepsilon=\pm$.  Morphisms of $\Graph^i_\cat$ are isotopy classes
of $\cat$-colored ribbon graphs in $\R^{i-1}\times [0,1]$. 
 By a \emph{(1,1)-ribbon graph} in $\Graph^i_\cat$ we mean a $\cat$-colored ribbon graph which is an endomorphism of an object $(V,+)$ in $\Graph^i_\cat$.  
  Let $\Ffun: \Graph^i_\cat \to \cat$ be the Reshetikhin-Turaev $\FK$-linear
functor (see \cite{GPT2}).
If $\cat$ is a ribbon category, the functor on planer graphs $\Ffun: \Graph^2_\cat \to \cat$ extends to the functor on spatial graphs $\Ffun: \Graph^3_\cat \to \cat$.  

\subsection{Renormalized colored ribbon graph invariants}\label{SS:renorm}
The motivation of this paper is to provide the underpinnings for the construction of topological invariants.  With this in mind, 
 in this subsection, we recall the notion of \emph{re-normalized}
colored ribbon graph invariants introduced and studied in
\cite{GP3,GKP1,GPT1, GPT2, GPV}. 
 This subsection is independent of the rest of the paper.  
 The theory of re-normalized
invariants produces non-trivial invariants in some situations when the
standard approaches fail.  In particular, these invariants can be
non-trivial when quantum dimension vanish.  The re-normalized
invariant of closed $\cat$-colored ribbon graph can be computed in
three steps: 1) cut a special edge of a closed $\cat$-colored ribbon
graph, 2) apply $\Ffun$ to the resulting graph to obtain an
endomorphism and 3) apply to the endomorphism a $\FK$-linear
functional to obtain a number.  If this functional has certain
properties then the number is an invariant of the ribbon graph.  Here
is a more precise definition:

 Let $\cat$ be a pivotal (resp., ribbon) category.  Let $\T_{adm}$ be
 a class of planar (resp. spatial) $\cat$-colored (1,1)-ribbon graphs.
 We denote an element of $\T_{adm}$ by $T_V$ where $V$ is the object
 of $\cat$ which colors the open edge of the (1,1)-ribbon graph.  We
 call $V$ the section of $T_V$.  Given such a graph $T_{V}$ the right
 braid closure gives a well defined equivalence class of a closed
 ribbon graph $ \wh {T_{V}}$ in $\R^2$ (resp. in $\R^3$).  Let
 $\LL_{adm}$ be the class of right braid closures of elements of
 $\T_{adm}$.  Let $\qt=\{\qt_V:End_\cat(V)\to \FK\}_V$ be a family of
 linear maps where $V$ runs over all the sections of elements $T_V\in
 \T_{adm}$.  Suppose that $\qt$ satisfies the condition:
\begin{verse}
  If $T_V,T'_W\in \T_{adm}$ such that $ \wh {T_{V}}$ is isotopic to $
  \wh {T'_{W}}$ then $\qt_V \bp{\Ffun(T_{V})}=\qt_W
  \bp{\Ffun(T'_{W})}$.
\end{verse}
We call the function 
$$\Ffun':\LL_{adm}\to\FK \text{ defined by } \Ffun'\bp{\wh{T_V}}=\qt_V\bp{\Ffun(T_V)}
$$
the \emph{re-normalized invariant} associated to $\LL_{adm}$ and $\qt$.   

We will now give some examples of re-normalized invariants. 
\begin{description}
\item[Example 1] \emph{T-ambi pair.}  
Let $\A$ be a class of simple objects in a  pivotal
$\FK$-category $\cat$.  The classes $\T_{adm}$ and $\LL_{adm}$ are formed by
the trivalent ribbon planar graphs whose edges are colored by elements
of $\A$.   The family $\qt=(\qt_V)_{V\in \A}$ is determined by a mapping
$\qd:\A\to\FK^\times$ constant on isomorphism classes of objects.
Then the map $\qt_V$ is determined by
$\qt_V\bp{\lambda\Id_V}=\lambda\qd(V)$.  If $\Ffun'$ is invariant by
isotopy in the sphere $S^2$ (here we consider an isomorphism
$S^2\simeq\R^2\cup\{\oo\}$) then we say that $(\A,\qd)$ is a
\emph{trivalent ambidextrous pair} or \emph{t-ambi} for short.  
A (modified) $6j$-symbol  is the value of a tetrahedron under  $\Ffun'$.  These $6j$-symbols  are the elementary algebraic ingredients of a  re-normalized Turaev-Viro type invariant of $3$-manifolds defined by state sums on triangulations (\cite{GPT2,GP3}). 
\item[Example 2] \emph{Right trace.} Let $\qt$ be a right trace on a right ideal $\I$  in a pivotal $\FK$-category $\cat$, see Subsection \ref{SS:trace}.  Let $\T_{adm}$ be all the
$\cat$-colored (1,1)-ribbon planar graphs whose sections are in $\I$.
Then $\Ffun'$ is a invariant of planar isotopy but in general it is not an
invariant of isotopy in the sphere $S^2$.  Nevertheless, for $V\in\I$
one can set $\qd(V)=\qt_V\bp{\Id_V}$ then Corollary 7 of \cite{GPV} implies that 
for $\A=\{V\in\I:V\text{ is simple, }V^*\in\I,\, \qd(V)=\qd(V^*)\}$,
$(A, \qd)$ is a trivalent-ambidextrous pair and the restriction of
$\Ffun'$ to $\A$-colored graphs is an invariant of isotopy in the
sphere $S^2$.
\item[Example 3]\emph{(Two-sided) trace.} 
 Let $\cat$ be a pivotal (resp. ribbon) $\FK$-category and let
$\qt$ be a trace on an ideal $\I$.  Again, let $\T_{adm}$ be all $\cat$-colored (1,1)-ribbon planar
(resp. spatial) graphs whose sections are in $\I$.  By setting $\qd(V)=\qt_V\bp{\Id_V}$ for  $V\in\I$ then Theorem 5 of \cite{GPV} implies $\Ffun'$ is  an invariant of spherical  (resp. spatial) ribbon graphs.  Moreover, $\LL_{adm}$ is formed by the $\cat$-colored
ribbon graphs with at least one edge in $\I$.
\end{description}

\subsection{$\Gr$-graded and generically $\Gr$-semi-simple
  categories}\label{SS:GSpher}
We now fix a group $\Gr$.  
\begin{definition}Grading:\\ A pivotal $\FK$-category is {\em $\Gr$-graded} if
  for each $g\in \Gr$ we have a non-empty full subcategory $\cat_g$ of
  $\cat$ stable by retract such that
  \begin{enumerate}
  \item  $\cat=\bigoplus_{g\in\Gr}\cat_g$, 
  \item  if $V\in\cat_g$,  then  $V^{*}\in\cat_{g^{-1}}$,
  \item  if $V\in\cat_g$, $V'\in\cat_{g'}$ then $V\otimes
    V'\in\cat_{gg'}$,
  \item  if $V\in\cat_g$, $V'\in\cat_{g'}$ and $\Hom_\cat(V,V')\neq 0$, then
    $g=g'$.  
  \end{enumerate}
\end{definition}
For a subset $\X\subset\Gr$ we say:
\begin{enumerate}
\item $\X$ is \emph{symmetric} if $\X^{-1}=\X$,
\item $\X$ is \emph{small} in $\Gr$ if the group $\Gr$ can not be covered by a
  finite number of translated copies of $\X$, in other words, for any
  $ g_1,\ldots ,g_n\in \Gr$, we have $ \bigcup_{i=1}^n (g_i\X) \neq\Gr$.
\end{enumerate}
\begin{definition}\label{D:genss}Semi-simplicity:
  \begin{enumerate}
  \item A $\FK$-category $\cat$ is \emph{semi-simple} if all its objects are semi-simple.
  \item A $\FK$-category $\cat$ is \emph{finitely semi-simple} if it is
    semi-simple and has finitely many isomorphism classes of simple
    objects.
  \item A $\Gr$-graded 
    category $\cat$ is a \emph{generically
    $\Gr$-semi-simple} category (resp. \emph{generically finitely
    $\Gr$-semi-simple} category) if there exists a small
    symmetric subset $\X\subset \Gr$ such that for each
    $g\in\Gr\setminus\X$, $\cat_g$ is semi-simple (resp. finitely
    semi-simple).  We call $\X$ the \emph{singular locus} of $\cat$.  By a
    \emph{generic simple} object we mean a  simple object of $\cat_g$ for some $g\in\Gr\setminus\X$.  
   \end{enumerate}
\end{definition}
\begin{remark}
  For a generically $\Gr$-semi-simple category $\cat$ with singular
  locus $\X$, its ideal $\Proj$ of projective objects contains all objects of
  $\cat_g$ for $g\in\Gr\setminus\X$.   In particular, generic simple
  objects of $\cat$ are projective.
\end{remark}

The notion of generically $\Gr$-semi-simple categories appears in
\cite{GPT2,GP3} through the following generalization of fusion
categories (in particular, fusion categories satisfy the following
definition when $\Gr$ is the trivial group, $\X=\emptyset$ and
$\qd=\bb=\qdim_\cat$ is the quantum dimension): 
\begin{definition}[Relative $\Gr$-spherical category]\label{D:G-spherical} 
Let $\cat$ be a generically finitely $\Gr$-semi-simple 
  pivotal 
 $\FK$-category with
  singular locus $\X\subset\Gr$ and let $\A$ be the class of generic
  simple objects of $\cat$. We say that $\cat$ is
  \emph{$(\X,\qd)$-relative $\Gr$-spherical} if
  \begin{enumerate}
  \item \label{ID:G-sph6} there exists a map $\qd:\A\rightarrow
    \FK^{\times}$ such that $(\A,\qd)$ is a t-ambi pair,
  \item \label{ID:G-sph7} there exists a map $\bb:\A\to \FK$ such that
    $\bb(V)=\bb(V^*)$, $\bb(V)=\bb(V')$ for any isomorphic objects $V, V'\in
    \A$ and for any $g_1,g_2,g_1g_2\in \Gr\setminus\X$ and $V\in \Gr_{g_1g_2}$
    we have
  \begin{equation*}\label{eq:bb}
    \bb(V)=\sum_{V_1\in irr(\cat_{g_1}),\, V_2\in irr(\cat_{g_2})}
    \bb({V_1})\bb({V_2})\dim_\FK(\Hom_\cat(V, V_1\otimes V_2))
  \end{equation*}
  where $irr(\cat_{g_i})$ denotes a representing set of isomorphism classes
  of simple  objects of $\cat_{g_i}$.
  \end{enumerate}
\end{definition}

We finish this section by recalling the following theorem and corollary which we use later to show certain algebras are semi-simple.   For a proof of the theorem see \cite{EtGHLSVY}.  

\begin{theorem}[the Density Theorem] Let $A$ be an algebra over an algebraically closed field $k$.  
Let $\{V_i\}_i$ be a set of  irreducible pairwise nonisomorphic finite dimensional modules over $A$. Then the map $\oplus_{i}\rho_i : A \to \bigoplus_i End_k(V_i)$ is surjective.
\end{theorem}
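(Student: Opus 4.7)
The plan is to derive this statement from the Jacobson density theorem applied to a suitable semisimple $A$-module.  First I would reduce to a finite family $V_1,\dots,V_n$:  any element of $\bigoplus_i\End_k(V_i)$ has only finitely many nonzero components, so it suffices to prove surjectivity of $\bigoplus_{i=1}^n\rho_i\colon A\to\prod_{i=1}^n\End_k(V_i)$ for every finite subfamily.  Fixing such a subfamily, set $W=V_1\oplus\cdots\oplus V_n$, viewed as an $A$-module via the diagonal action.

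Next I would compute the commutants that control the density argument.  Because $k$ is algebraically closed and each $V_i$ is a finite-dimensional irreducible $A$-module, Schur's lemma gives $\End_A(V_i)=k\,\Id_{V_i}$; since the $V_i$ are pairwise nonisomorphic, it also gives $\Hom_A(V_i,V_j)=0$ for $i\neq j$.  Hence $\End_A(W)=\bigoplus_{i=1}^n k\cdot e_i$, where $e_i\in\End_k(W)$ is the projector onto the $V_i$-summand.  A direct calculation then identifies the commutant of $\End_A(W)$ inside $\End_k(W)$ with the block-diagonal subalgebra $\bigoplus_{i=1}^n\End_k(V_i)$, which plainly contains the image of $A$.

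The core of the proof, and the only real technical step, is to upgrade this inclusion to an equality.  Given $f$ in the commutant and a basis $w_1,\dots,w_d$ of $W$, I need $a\in A$ with $\rho(a)w_j=f(w_j)$ for every $j$, which forces $\rho(a)=f$.  I would run the standard density argument on $W^d$: the cyclic $A$-submodule $M=A\cdot(w_1,\dots,w_d)\subset W^d$ is a direct summand by semisimplicity of $W^d$ (a finite sum of copies of the $V_i$), hence the image of an idempotent $p\in\End_A(W^d)=\Mat_d(\End_A(W))$; the endomorphism $f^{\oplus d}$ commutes with this matrix algebra, hence with $p$, so $f^{\oplus d}(w_1,\dots,w_d)=p\bigl(f^{\oplus d}(w_1,\dots,w_d)\bigr)\in M$, producing the required $a$.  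The splitting of $W$ into several non-isomorphic isotypic pieces causes no obstruction: both the semisimplicity of $W^d$ and the $A$-linearity of the projection onto $M$ follow from Schur's lemma, and the finite-dimensionality of $W$ converts the density conclusion into equality.
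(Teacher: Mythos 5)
Your argument is correct. Note that the paper itself does not prove this statement; it is quoted as a known result with a pointer to the Etingof et al. textbook, so there is no in-paper proof to compare against. What you give is the classical Jacobson density argument specialized to the finite-dimensional semisimple situation: Schur's lemma over the algebraically closed field $k$ identifies $\End_A(W)$ with $\bigoplus_i k\,e_i$ and its commutant with the block-diagonal algebra $\bigoplus_i \End_k(V_i)$, and the $W^d$ trick (the cyclic submodule $M=A\cdot(w_1,\dots,w_d)$ is a direct summand of the semisimple module $W^d$, cut out by an idempotent $p\in\Mat_d(\End_A(W))$ with which $f^{\oplus d}$ commutes) upgrades the inclusion to surjectivity. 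All steps check out; the only implicit hypothesis is that $A$ is unital, so that $(w_1,\dots,w_d)\in M$ and hence $f^{\oplus d}(w_1,\dots,w_d)=p\,f^{\oplus d}(w_1,\dots,w_d)$ lies in $M$ -- this is the standing convention in the cited reference and certainly holds for the quantum groups to which the paper applies the theorem. The textbook proof proceeds instead by an induction on the number of vectors for a single irreducible followed by an independence argument for several nonisomorphic ones, but the two routes are equivalent in substance and yours is complete as written.
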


\begin{corollary}  \label{C:DensityThm}
Let $\cat$ be the category of finite dimensional modules of the finite dimensional algebra $A$.  Let $\{V_i\}_i$ be a set of  irreducible pairwise non-isomorphic finite dimensional modules over $A$.   
If 
$$\dim(A)=\sum_i \dim(V_i)^2$$ 
  then $\cat$ is semi-simple.   
\end{corollary}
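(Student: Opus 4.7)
The plan is to leverage the Density Theorem together with the dimension hypothesis to show that the map $\rho=\bigoplus_i\rho_i$ is not merely surjective but an isomorphism of algebras, whereupon Artin--Wedderburn (or a direct argument) identifies $A$ with a product of matrix algebras and hence finishes the proof.

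First, I would observe that the hypothesis $\dim(A)=\sum_i\dim(V_i)^2$ is finite, so the indexing set $\{V_i\}_i$ is automatically finite. Set $B=\bigoplus_i \End_k(V_i)$. By the Density Theorem, the algebra homomorphism $\rho=\bigoplus_i\rho_i\colon A\to B$ is surjective. Since $\dim_k B=\sum_i\dim(V_i)^2=\dim_k A$, surjectivity forces $\rho$ to be a $k$-linear isomorphism; as $\rho$ is already an algebra map, it is an isomorphism of $k$-algebras.

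Next I would use that $B$, being a finite product of full matrix algebras over the algebraically closed field $k$, is a semi-simple algebra; its simple left modules are precisely $V_1,\dots,V_n$ up to isomorphism, and every finite dimensional left $B$-module is isomorphic to a finite direct sum of these. Transporting along $\rho^{-1}$, the same is true for $A$: every object of $\cat$ (the category of finite dimensional $A$-modules) decomposes as a finite direct sum of simple modules, so $\cat$ is semi-simple.

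The only delicate point is really the application of the Density Theorem, which the excerpt already grants; beyond that the argument is a straightforward dimension count and a standard structural fact about finite products of matrix algebras. The main obstacle, such as it is, is simply to notice that $\rho$ is forced to be an isomorphism rather than a mere surjection, which is where the sharp equality in the hypothesis gets used.
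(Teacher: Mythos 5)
Your proof is correct and follows essentially the same route as the paper: apply the Density Theorem to get surjectivity of $\bigoplus_i\rho_i$, use the dimension equality to upgrade this to an isomorphism onto a product of matrix algebras, and conclude semi-simplicity of $A$ and hence of $\cat$. Your additional remarks (finiteness of the index set, transporting modules along the isomorphism) are just careful elaborations of the same argument.
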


\begin{proof}
From the Density Theorem we know that $\oplus_{i}\rho_i : A \to \bigoplus_i End_k(V_i)$ is surjective.  The assumption on the dimensions implies that this map is a injective.  Thus, $A$ is isomorphic to the direct sum of matrix algebras which is semi-simple and it follows that $\cat$ is semi-simple.  
\end{proof}

\section{Extension of generic properties}\label{S:Thgen}  
 Let $\cat$ be a pivotal $\FK$-category. 
 In this
section we present two theorems which extend properties observed for
generic simple objects of $\cat$ to general properties in the full category.

\begin{proposition}\label{P:subrib}
  If $\cat$ is a braided pivotal $\FK$-category then 
  the class of objects
  $$
  \{V\in\cat:\theta_{V^*}=(\theta_V)^*\}
  $$ forms a full subcategory of
  $\cat$ which is ribbon.
\end{proposition}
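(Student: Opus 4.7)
The strategy is to verify that the class $\mathcal{R} := \{V \in \cat : \theta_{V^*} = (\theta_V)^*\}$ contains $\unit$ and is closed under tensor product and duality. Once these closure properties are established, the full subcategory on $\mathcal{R}$ inherits the braiding, pivotal structure, and twist from $\cat$, and its defining condition is precisely the ribbon axiom \eqref{eq:twistc}, so this full subcategory is ribbon.

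That $\unit \in \mathcal{R}$ is immediate: one has $c_{\unit,\unit} = \Id_\unit$ and $\tev_\unit \coev_\unit = \Id_\unit$, whence $\theta_\unit = \ptr_R(c_{\unit,\unit}) = \Id_\unit$, and after identifying $\unit^* \cong \unit$ the same holds for $\theta_{\unit^*}$. For closure under duality, if $V \in \mathcal{R}$, I would apply $(\cdot)^*$ to the equality $\theta_{V^*} = (\theta_V)^*$, obtaining $(\theta_{V^*})^* = (\theta_V)^{**}$, which under the pivotal identification $(\cdot)^{**} \cong \Id$ combined with naturality of $\theta$ becomes $\theta_{V^{**}} = (\theta_{V^*})^*$, i.e., $V^* \in \mathcal{R}$.

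The main step is closure under tensor product. For $V,W \in \mathcal{R}$, I would begin with the identity
\[
  \theta_{V \otimes W} \;=\; (\theta_V \otimes \theta_W)\circ c_{W,V}\circ c_{V,W},
\]
which holds in any braided pivotal $\FK$-category as a consequence of $\theta_V = \ptr_R(c_{V,V})$, the hexagon axiom, and naturality of the braiding. Dualizing this and using $(f \otimes g)^* = g^* \otimes f^*$ together with the standard duality--braiding compatibility $(c_{X,Y})^* = c_{Y^*,X^*}$ expresses $(\theta_{V\otimes W})^*$ in terms of $\theta_V^{*}, \theta_W^{*}$ and braidings on $V^*, W^*$. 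Applying the same tensor-product formula to $(V\otimes W)^* \cong W^* \otimes V^*$ expresses $\theta_{(V\otimes W)^*}$ in terms of $\theta_{V^*}, \theta_{W^*}$ and the same braidings. Invoking the hypothesis $V, W \in \mathcal{R}$ to substitute $\theta_{V^*} = \theta_V^{*}$ and $\theta_{W^*} = \theta_W^{*}$, and using naturality of the braiding to permute the resulting double braiding past the twist factors, the two expressions coincide, giving $V \otimes W \in \mathcal{R}$.

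The main obstacle is bookkeeping with the pivotal identifications $V \cong V^{**}$ and $(V\otimes W)^* \cong W^* \otimes V^*$, and invoking the precise form of the duality--braiding compatibility; these are all standard in pivotal braided categories but must be handled carefully in the non-strict setting. Once assembled, the proof reduces to a short diagram chase, and the conclusion that the restricted twist obeys \eqref{eq:twistc} on $\mathcal{R}$ is automatic.
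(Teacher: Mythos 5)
Your proof is correct, but it takes a genuinely different route from the paper's. The paper introduces the auxiliary natural automorphism $\mathscr E_V=\ptr_R(c^{-1}_{V,V})\circ\ptr_R(c_{V,V})$ of the identity functor, proves that it is monoidal ($\mathscr E_{V\otimes W}=\mathscr E_V\otimes\mathscr E_W$, by Reidemeister II/III alone) and satisfies $(\mathscr E_V)^*=\mathscr E_{V^*}^{-1}$, and then observes $(\theta_V)^*=(\mathscr E_V)^*\circ\theta_{V^*}$, so that the class in question is exactly $\{V:\mathscr E_V=\Id_V\}$; the closure properties then drop out immediately. You instead work with $\theta$ directly, via the balancing identity $\theta_{V\otimes W}=(\theta_V\otimes\theta_W)\circ c_{W,V}\circ c_{V,W}$ and the self-duality of the braiding, and your final commutation step (pushing the double braiding past $\theta_W^*\otimes\theta_V^*$ by naturality, after substituting $\theta_{V^*}=\theta_V^*$) does close the argument. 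The trade-off: your route leans on two standard but nontrivial facts that you assert rather than prove --- that $\ptr_R(c_{V,V})$ automatically satisfies the balancing axiom in a braided pivotal category, and that the pivotal dual of the braiding is again the braiding (note the correctly typed form is $(c_{X,Y})^*=c_{X^*,Y^*}\colon X^*\otimes Y^*\to Y^*\otimes X^*$, not $c_{Y^*,X^*}$ as you wrote) --- each of which requires a graphical argument of about the same weight as the paper's two lemmas on $\mathscr E$. The paper's packaging has the additional payoff that the criterion $\mathscr E_V=\Id_V$ and the monoidality of $\mathscr E$ are reused verbatim in the proof of Theorem~\ref{T:GenTwist}, whereas your formulation would need to be re-derived there.
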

\begin{proof}
  For each object $V\in\cat$, consider the automorphism $$\mathscr
  E_V=\ptr_R(c^{-1}_{V,V})\circ\ptr_R(c_{V,V})=\Ffun\bp{\epsh{fig2}{8ex}}\in\End_\cat(V).$$
  The family $(\mathscr E_V)_{V\in\cat}$ defines a natural transformation
  (as $\mathscr E_*$ is an automorphism of the identity functor, its
  naturality is just the fact that $\mathscr E_Vf=f\mathscr E_U$ for any $f:U\to V$).  Its inverse is given by $$\mathscr E_V^{-1}
  =\Ffun\bp{\epsh{fig3}{8ex}}=\ptr_L(c^{-1}_{V,V})\circ\ptr_L(c_{V,V})
  =\ptr_L(c_{V,V})\circ\ptr_L(c^{-1}_{V,V}).$$ The naturality of
  $c_{V,V}$ implies that the image by $\Ffun$ of a $\cat$-colored diagram
  is invariant by Reidemester II and III moves so that for any
  $V,W\in\cat$
  \begin{equation}
    \label{eq:T}
    \mathscr  E_{V\otimes W}=\Ffun\bp{\epsh{fig22}{8ex}}=
    \Ffun\bp{\epsh{fig2}{8ex}\epsh{fig2}{8ex}}=
    \mathscr  E_{V}\otimes\mathscr  E_{W}\,,
  \end{equation}
  showing that $\mathscr E_*$ is a monoidal transformation.  Finally, 
  the dual of the right partial trace is given by the left partial
  trace so
  \begin{equation}
    \label{eq:D}
    (\mathscr E_V)^*=\mathscr E_{V^*}^{-1}.
  \end{equation}
  For any object $V\in\cat$, $(\theta_V)^*=(\mathscr
  E_V)^*\circ\theta_{V^*}$ so $(\theta_V)^*=\theta_{V^*}\iff \mathscr
  E_V=\Id_V$ and the class in the proposition is clearly a sub-pivotal
  category of $\cat$ because of properties \eqref{eq:T},\eqref{eq:D}
  of the natural automorphism $\mathscr E_*$.
\end{proof}

The following theorem generalizes an argument of Ha (see \cite[Proposition 3.7]{Ha}).
\begin{theorem}\label{T:GenTwist}
  Let $\cat$ be a generically $\Gr$-semi-simple 
  pivotal 
  braided category.    
  If $\theta(V)^*=\theta(V^*)$ holds for any
  generic simple object $V$, then $\cat$ is a ribbon category
  i.e. $\theta$ is a twist on the full category $\cat$.
\end{theorem}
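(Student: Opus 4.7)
The plan is to apply Proposition~\ref{P:subrib}. Let $\cat'$ be the full pivotal subcategory of objects $V \in \cat$ with $\mathscr E_V = \Id_V$, equivalently $\theta_{V^*} = (\theta_V)^*$; by Proposition~\ref{P:subrib}, $\cat'$ is ribbon. It therefore suffices to show $\cat' = \cat$.

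First I would verify that $\cat'$ is stable under finite direct sums: if $V$ is a direct sum of objects $V_i \in \cat'$ via retracts $(V_i,\alpha_i,\beta_i)$, then naturality of the automorphism $\mathscr E_*$ of the identity functor gives $\mathscr E_V \alpha_i = \alpha_i \mathscr E_{V_i} = \alpha_i$, and hence $\mathscr E_V = \mathscr E_V \bigl(\sum_i \alpha_i \beta_i\bigr) = \sum_i \alpha_i \beta_i = \Id_V$. Since $\cat_g$ is semi-simple for each $g \in \Gr \setminus \X$ and its simple objects are, by definition, generic simples (hence in $\cat'$ by hypothesis), this already yields $\cat_g \subset \cat'$ for every $g \in \Gr \setminus \X$.

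For a general $V \in \cat_g$ with $g \in \X$ I would invoke the smallness of $\X$: the symmetric set $\X \cup g^{-1}\X$ is a union of two translates of $\X$ and therefore cannot exhaust $\Gr$, so I can pick $g_1 \in \Gr \setminus (\X \cup g^{-1}\X)$ together with any generic simple $W \in \cat_{g_1}$. Since $g g_1 \notin \X$, the previous paragraph gives $V \otimes W \in \cat'$, i.e.\ $\mathscr E_{V \otimes W} = \Id_{V \otimes W}$; combined with the monoidality of $\mathscr E_*$ and $\mathscr E_W = \Id_W$, this yields
\[
\mathscr E_V \otimes \Id_W \;=\; \mathscr E_V \otimes \mathscr E_W \;=\; \mathscr E_{V \otimes W} \;=\; \Id_V \otimes \Id_W.
\]

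The main obstacle is the final cancellation of $\Id_W$ to deduce $\mathscr E_V = \Id_V$. A direct attempt is to apply the right partial trace with respect to $W$, producing $\qdim_R(W)\,\mathscr E_V = \qdim_R(W)\,\Id_V$ in $\End_\cat(V)$; this concludes immediately whenever some admissible $W$ has $\qdim_R(W) \neq 0$, but fails in the intended applications, in which the quantum dimensions of all generic simples vanish. My fallback plan is to reduce, via a Krull--Schmidt style decomposition into indecomposables together with the direct-sum closure of $\cat'$ from the first step, to the case where $V$ is simple; then $\mathscr E_V = \lambda\,\Id_V$ for some $\lambda \in \FK$, and the identity $(\lambda-1)\,\Id_{V \otimes W} = 0$ forces $\lambda = 1$ via the canonical isomorphism $\FK \cong \End_\cat(\unit)$ together with the nonvanishing of $\Id_{V \otimes W}$, which holds because $V \otimes W$ is a nonzero object of the nontrivial semi-simple component $\cat_{g g_1}$.
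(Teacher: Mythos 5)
Your overall strategy coincides with the paper's: pass to the natural monoidal automorphism $\mathscr E_*$ of Proposition~\ref{P:subrib}, note that $\mathscr E_V=\Id_V$ for generic simple $V$ and hence, by naturality and direct-sum closure, for every object of $\cat_g$ with $g\notin\X$, and then for $V\in\cat_g$ with $g\in\X$ use smallness of $\X$ to find a generic $W$ with $\mathscr E_{V\otimes W}=\Id_{V\otimes W}$, yielding $\mathscr E_V\otimes\Id_W=\Id_V\otimes\Id_W$. Up to this point the argument is correct, and you rightly observe that taking a partial trace cannot finish the job since the relevant quantum dimensions vanish.

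The gap is in your fallback for the final cancellation. A Krull--Schmidt decomposition reduces you to $V$ \emph{indecomposable}, not simple, and in the singular blocks $\cat_g$, $g\in\X$, these are genuinely different: the whole point of the failure of semi-simplicity is the existence of indecomposables with $\End_\cat(V)\neq\FK\,\Id_V$ (e.g.\ projective covers of simples). For such $V$ you only know $\End_\cat(V)$ is local, so $\mathscr E_V=\lambda\,\Id_V+n$ with $n$ in the radical; your scalar argument can still force $\lambda=1$ (because $n\otimes\Id_W$ is nilpotent while $(\lambda-1)\Id_{V\otimes W}$ is central), but it cannot kill the radical part: from $n\otimes\Id_W=0$ one cannot conclude $n=0$ without knowing that $\cdot\otimes W$ is a faithful functor. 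That faithfulness of tensoring with a non-zero object is precisely what the paper relies on when it cancels $\Id_V$ directly --- it is the standing hypothesis recorded just before Theorem~\ref{T:GenericDimTrace}, valid for categories of finite-dimensional modules over a Hopf algebra --- and it also justifies your unproved claim that $V\otimes W\neq0$. So the correct repair is not a cleverer algebraic reduction but an explicit faithfulness assumption on the tensor functors, after which the cancellation is immediate with no case analysis.
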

\begin{proof}
  Consider the natural automorphism $\mathscr E_*$ of the proof of
  Proposition \ref{P:subrib}.  By assumption we have $\mathscr
  E_V=\Id_V$ for any generic simple object $V$.  By naturality, this
  property is stable by direct sums so it is also true for any
  semi-simple object that is a direct sum of generic simple objects.
  Now for any homogenous object $W\in\cat_g$, let $h\in\Gr$ be such
  that $h,hg\notin\X$ and let $V\in\cat_h$.  Then $V\otimes
  W\in\cat_{hg}$ is semi-simple and
  $$\Id_V\otimes\Id_W=\mathscr E_{V\otimes W}=
  \mathscr E_{V}\otimes\mathscr E_{W}=\Id_V\otimes\mathscr E_{W}$$
  thus $\mathscr E_{W}=\Id_W$ and $\theta(W)^*=\theta(W^*)$.
\end{proof}

For the second theorem we first recall the relation between (right)
trace and duality:
\begin{proposition}(see \cite[Lemma 2 \& 3]{GPV}) 
  \begin{enumerate}
  \item If $\ideal$ is a right ideal then
    $\ideal^*=\{V\in\cat,\,V^*\in\ideal\}$ is a left ideal and
    $$\ideal\text{ is an ideal }\iff\ideal^*=\ideal.$$
  \item If $\qt$ is a right trace on the right ideal $\I$ then $\qt^*$
    is a left trace on $\I^*$ where by definition,
    $\qt_V^*(f)=\qt_{V^*}(f^*)$ for any $V\in\I$, any $f\in\End_\cat(V)$ and 
    $$\qt\text{ is a trace }\iff\qt^*=\qt.$$
  \end{enumerate}
\end{proposition}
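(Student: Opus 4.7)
The proof naturally splits along the two parts of the proposition, which I would tackle in sequence.

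For part (1), the plan is formal. To see $\ideal^*$ is a left ideal, observe that for $V\in\ideal^*$ and $W\in\cat$ one has $(W\otimes V)^*\cong V^*\otimes W^*\in\ideal$ since $\ideal$ is a right ideal, and dualization preserves retracts (a splitting $(\alpha,\beta)$ of $W$ inside $V$ dualizes to a splitting $(\beta^*,\alpha^*)$ of $W^*$ inside $V^*$). For the equivalence ``$\ideal$ ideal $\iff \ideal=\ideal^*$'', the direction $(\Leftarrow)$ is immediate from what was just shown, while $(\Rightarrow)$ uses the zigzag identity to realize $V^*$ as a retract of $V^*\otimes V\otimes V^*$ via $\Id_{V^*}\otimes\coev_V$ and $\ev_V\otimes\Id_{V^*}$; as $V^*\otimes V\otimes V^*\in\ideal$ by two-sidedness, this gives $V^*\in\ideal$, i.e.\ $V\in\ideal^*$, and the reverse inclusion follows by applying the same argument to $V^*$ together with pivotality $V\cong V^{**}$.

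For part (2), I would first verify that $\qt^*$ is a left trace on $\ideal^*$: linearity and cyclicity follow from $(gf)^*=f^*g^*$ combined with linearity and cyclicity of $\qt$, while the left partial-trace property translates, under $F\mapsto F^*$, to the right partial-trace property of $\qt$ via the standard pivotal identity $\ptr_R(F^*)=(\ptr_L(F))^*$ (provable from the definitions and the zigzag equations). For the equivalence ``$\qt$ is a trace iff $\qt^*=\qt$'', the direction $(\Leftarrow)$ is immediate: if $\qt=\qt^*$ then $\qt$ is simultaneously a right trace (by hypothesis) and a left trace (via $\qt^*$) on $\ideal=\ideal^*$, which is an ideal by part (1).

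The heart of the proof is the direction $(\Rightarrow)$, which requires the identity $\qt_V(f)=\qt_{V^*}(f^*)$ for every $V\in\ideal$ and $f\in\End_\cat(V)$. The plan is to introduce the auxiliary endomorphism
$$\Phi=(f\otimes\Id_{V^*})\circ\coev_V\circ\tev_V\in\End_\cat(V\otimes V^*),$$
which lies in $\ideal$ since $\ideal$ is a right ideal. A short zigzag computation gives $\ptr_R(\coev_V\circ\tev_V)=\Id_V$ and $\ptr_L(\coev_V\circ\tev_V)=\Id_{V^*}$; combining these with the pivotal naturality $(f\otimes\Id_{V^*})\circ\coev_V=(\Id_V\otimes f^*)\circ\coev_V$ then yields $\ptr_R(\Phi)=f$ and $\ptr_L(\Phi)=f^*$. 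Applying $\qt_{V\otimes V^*}$ to $\Phi$ via the two partial-trace properties of $\qt$ delivers the chain
$$\qt_V(f)=\qt_V(\ptr_R(\Phi))=\qt_{V\otimes V^*}(\Phi)=\qt_{V^*}(\ptr_L(\Phi))=\qt_{V^*}(f^*).$$
The main obstacle is thus identifying the right auxiliary $\Phi$ and executing the zigzag simplifications of its two partial traces; once these are in hand the remainder is bookkeeping with the trace and ideal axioms.
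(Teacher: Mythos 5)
Your proof is correct, and it follows the same route as the paper's cited source (the paper itself gives no proof, deferring to \cite[Lemmas 2 \& 3]{GPV}): dualization exchanges left and right ideals and partial traces via $\ptr_R(f^*)=(\ptr_L(f))^*$, and the key identity $\qt_V(f)=\qt_{V^*}(f^*)$ for a two-sided trace is obtained exactly as you do, from an auxiliary endomorphism of $V\otimes V^*$ whose right and left partial traces are $f$ and $f^*$. No gaps.
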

\begin{definition}\ 
  \begin{enumerate}
  \item A pseudo-monoid class in a pivotal category $\cat$ is a class of objects
    stable by dual, retracts and tensor product.
  \item Let $\qt$ be a right trace on a right ideal $\I$.  The
    horizontal part of $\I$ for $\qt$ is the class
    $$\I^==\{V\in\I:V^*\in\I\text{ and }\qt_V=\qt^*_{V}\}.$$
  \end{enumerate}
\end{definition}
\begin{remark}
  An ideal of a pivotal category is a pseudo-monoid and the converse is
  partially true.  Indeed one could define the idealizer of a pseudo-monoid
  $\M$ to be the full subcategory of $\cat$ whose objects are
  $$\unit_\cat(\M)=\{V\in\cat: V\otimes \M\subset\M\supset\M\otimes V\}.$$
  Then one can prove that $\unit_\cat(\M)$ is a sub-pivotal category
  of $\cat$ which contains $\M$ such that $\M$ is an ideal in
  $\unit_\cat(\M)$.
\end{remark}

\begin{proposition}
  Let $\qt$ be a right trace on a right ideal $\I$.  Then the
  horizontal part $\I^=$ of $\I$ is a pseudo-monoid.  
  Furthermore, $\I^=$ is stable by direct sum in the following weak
  sense: if $\{V_i\}_i$ is a finite family of object of $\I^=$ and
  $W\in\I\cap\I^*$ is isomorphic to their direct sum, then $W\in\I^=$.
\end{proposition}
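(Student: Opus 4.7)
The proposition asserts four closure properties of $\I^=$: stability under duals, retracts, tensor products, and weak direct sums. My plan is to verify each in turn, using the defining equation $\qt_V = \qt_V^*$ together with cyclicity of the right trace $\qt$ on $\I$ (and symmetrically, by the preceding proposition, the left trace $\qt^*$ on $\I^*$).

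For duals, if $V \in \I^=$, then the pivotal identification $V \simeq V^{**}$ places $V^{**}$ in $\I$, hence $V^* \in \I^*$; and writing an arbitrary $g \in \End_\cat(V^*)$ as $g = f^*$ with $f = g^* \in \End_\cat(V)$ (via the pivotal $V \simeq V^{**}$), the equation $\qt_V(f) = \qt_{V^*}(f^*)$ rearranges to $\qt^*_{V^*}(g) = \qt_{V^{**}}(g^*) = \qt_V(g^*) = \qt_{V^*}(g)$. For a retract $(W,\alpha,\beta)$ of $V \in \I^=$, retract stability of $\I$ yields $W, W^* \in \I$ (with $(W^*, \beta^*, \alpha^*)$ realising $W^*$ as a retract of $V^*$), and for $h \in \End_\cat(W)$ two applications of cyclicity sandwich the identity $\qt_V = \qt_V^*$: $\qt_W(h) = \qt_V(\alpha h \beta) = \qt_{V^*}(\beta^* h^* \alpha^*) = \qt_{W^*}(h^* \alpha^* \beta^*) = \qt_{W^*}(h^*)$, where the last step uses $\alpha^*\beta^* = (\beta\alpha)^* = \Id_{W^*}$.

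The tensor product case is the heart of the argument. Given $V_1, V_2 \in \I^=$ and $f \in \End_\cat(V_1 \otimes V_2)$, I would compute the two sides of the target equation $\qt_{V_1 \otimes V_2}(f) = \qt^*_{V_1 \otimes V_2}(f)$ via the trace axioms: the right-trace of $\qt$ on $V_1 \otimes V_2 \in \I$ (using $V_1 \in \I$) gives the left-hand side as $\qt_{V_1}(\ptr_R(f))$, while the left-trace of $\qt^*$ on $V_1 \otimes V_2 \in \I^*$ (using $V_2 \in \I^*$) combined with $V_2 \in \I^=$ rewrites the right-hand side as $\qt_{V_2}(\ptr_L(f))$. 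The remaining identity $\qt_{V_1}(\ptr_R(f)) = \qt_{V_2}(\ptr_L(f))$ I would prove by transporting the left side through the pivotal using $V_1 \in \I^=$ and the standard identity $(\ptr_R(f))^* = \ptr_L(f^*)$ to rewrite it as $\qt_{V_1^*}(\ptr_L(f^*))$, then applying cyclicity of $\qt$ inside the auxiliary object $V_2^* \otimes V_1^* \in \I$, and finally invoking $V_2 \in \I^=$ to land at $\qt_{V_2}(\ptr_L(f))$.

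For the weak direct sum, with decomposition $\Id_W = \sum_i \alpha_i \beta_i$ and $V_i \in \I^=$, cyclicity gives $\qt_W(h) = \sum_i \qt_{V_i}(\beta_i h \alpha_i)$; applying $V_i \in \I^=$ to each summand and reassembling via the dual decomposition $\Id_{W^*} = \sum_i \beta_i^* \alpha_i^*$ of $W^* \in \I^*$ recovers $\qt_{W^*}(h^*)$, so $W \in \I^=$. I expect the main obstacle to be the final cyclicity manoeuvre in the tensor case — equating $\qt_{V_1^*}(\ptr_L(f^*))$ and $\qt_{V_2^*}(\ptr_R(f^*))$ — since it requires threading the cyclic invariance of $\qt$ through the enlarged object $V_2^* \otimes V_1^*$ while simultaneously exploiting the $\I^=$-membership of both factors.
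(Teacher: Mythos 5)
Your treatment of duals, retracts and weak direct sums is correct and essentially coincides with the paper's proof: the same pivotal transport $V\simeq V^{**}$ for duals, the same cyclicity sandwich with $\alpha,\beta$ for retracts, and the same elimination of the cross terms $p_ihp_j$ for direct sums. The gap sits exactly where you predicted it: the middle step of the tensor-product argument. The identity you need, $\qt_{V_1^*}(\ptr_L(f^*))=\qt_{V_2^*}(\ptr_R(f^*))$ for $f^*\in\End_\cat(V_2^*\otimes V_1^*)$, is not an instance of cyclicity inside $\End_\cat(V_2^*\otimes V_1^*)$. Indeed, since the right-trace axiom already gives $\qt_{V_2^*\otimes V_1^*}(f^*)=\qt_{V_2^*}(\ptr_R(f^*))$, the identity you want is equivalent to $\qt_{V_2^*\otimes V_1^*}(f^*)=\qt_{V_1^*}(\ptr_L(f^*))$, i.e.\ to the \emph{left} partial-trace axiom for $\qt$ on this object --- a piece of the very two-sidedness the proposition is building toward. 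Cyclicity (axiom (1) of a right trace) only lets you move factors of a composition around between objects of $\I$; it never converts a right partial trace into a left one, and for a general right trace the desired identity fails without further input.

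The further input the paper uses is Lemma 4 of [GPV]: a right trace on a right ideal is automatically \emph{right ambidextrous}, meaning $\qt_{W}(\ptr_L(f))=\qt_{V^{*}}\bp{(\ptr_R(f))^{*}}$ for $f\in\End_\cat(V\otimes W)$ with $W\in\I$ and $V^*\in\I$. That lemma is itself proved by a cyclicity argument, but threaded through the larger auxiliary object $V^*\otimes V\otimes W$ --- which lies in $\I$ precisely because $V^*\in\I$ and $\I$ is a right ideal --- followed by a graphical computation identifying the resulting right partial trace with $(\ptr_R(f))^*$. This is where the hypothesis $V^*\in\I$, available here because $V\in\I^=$, does real work. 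If you either cite that lemma or reproduce its proof with the correct auxiliary object, your chain closes and your argument becomes the paper's.
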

\begin{proof}
  If $V$ is a retract of $W\in \I^=$ then there are $\alpha:V\to
  W,\,\beta:W\to V$ with $\beta\alpha=\Id_V$.  Then $V^*$ is a retract
  of $W^*\in \I^=$ so $V^*\in\I$.   For any $f\in\End_\cat(V)$, by definition $\qt_V^*(f)=\qt_{V^*}(f^*)$ but
  $$\qt_V(f)=\qt_{V}(f\beta\alpha) =\qt_{W}(\alpha f\beta) 
  =\qt_{W^*}(\beta^*f^*\alpha^*)= \qt_{V^*}(\alpha^*\beta^*f^*)=
  \qt_{V^*}(f^*).$$ So $V\in\I^=$ and $\I^=$ is stable by retract
  (recall in particular that an isomorphic object is a retract).

  Let $V\in\I^=$ and let $\phi_V:V\stackrel\sim\to V^{**}$ be the
  pivotal isomorphism.  Then for any $f\in\End_\cat(V)$,
  $\qt_V(f)=\qt_V(\phi_V^{-1}f^{**}\phi_V)=\qt_{V^{**}}(f^{**}\phi_V\phi_V^{-1})
  =\qt_{V^{**}}(f^{**})$.  Given $g\in\End_\cat(V^*)$ then $g$ is the dual
  of $f=\phi_Vg^*\phi_V^{-1}\in\End_\cat(V)$ so
  $$\qt_{V^*}(g)=\qt_{V^*}(f^*)=\qt_V(f)=\qt_{V^{**}}(f^{**})=\qt_{V^{*}}^*(g)$$
  and $\bp{\I^=}^*\subset\I^=$.

  Let $V,W\in\I^=$ and let $f\in \End_\cat(V\otimes W)$.  Then
  $V^*\in\I$ and $\qt$ is right ambidextrous by \cite[Lemma 4]{GPV} so
  $\qt(\ptr_R(f)^*)=\qt(\ptr_L(f'))=\qt(\ptr_L(f))$ where
  $f'=(\phi_V\otimes\Id_W)f(\phi_V^{-1}\otimes\Id_W)$ so
  $$\qt(f)=\qt(\ptr_R(f))=\qt(\ptr_R(f)^*)=\qt(\ptr_L(f))
  =\qt(\ptr_L(f)^*)=\qt(\ptr_R(f^*))=\qt(f^*)$$
  thus $V\otimes W\in \I^=$.

  If $W$ is a direct sum of objects of $\I^=$ with associated
  projectors $p_i=\alpha_i\beta_i$, then so is $W^*$ with projectors
  $p_i^*$.  Now for any $f\in\End_\cat(W)$,
  $f=\bp{\sum_ip_i}f\bp{\sum_ip_i}=\sum_{i,j}p_ifp_j$ so
  $$\qt(f)=\sum_i\qt(p_ifp_i)+\sum_{i\neq j}\cancel{\qt(fp_jp_i)}
  =\sum_i\qt\bp{\beta_if\alpha_i\cancel{\beta_i \alpha_i}}
  =\sum_i\qt\bp{\alpha_i^*f^*\beta_i^*}
  =\qt(f^*).$$
\end{proof}

An object $V\in\cat$ is non-zero if $\Id_V\neq0$.
For the next theorem, we will do the following assumption that is
true for example if $\cat$ is a category of finite dimensional
representations of a Hopf $\FK$-algebra: We will assume that $\cat$ is
a Krull-Schmidt category and that for any non-zero $V\in \cat$, 
$V\otimes\cdot$ and $\cdot\otimes V$ are faithful functors.  The
former implies that any object in $\cat$ is isomorphic to an unique
direct sum of indecomposable objects (an object is indecomposable if
it is not isomorphic to a direct sum of two non-zero objects).
\begin{theorem}\label{T:GenericDimTrace}
  Let $\cat$ be a generically $\Gr$-semi-simple 
  pivotal 
   category with
  the above assumption.  Let $\qt$ be a right
  trace on $\Proj$ and let $\qd=\{\qd(V)\in \FK\}_{V\in \Proj}$ denote the associated modified dimension.   If $\qd(V)=\qd(V^*)$ holds for any
  generic simple object $V$, then $\qt$ is a trace on $\Proj$.
\end{theorem}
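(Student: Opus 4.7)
The plan is to show that every object of $\Proj$ lies in $\Proj^=$, the horizontal part of $\Proj$ for $\qt$ introduced before the preceding proposition. By that proposition, $\Proj^=$ is a pseudo-monoid---closed under duals, retracts, and tensor products---and enjoys a weak direct-sum stability: any direct sum in $\Proj \cap \Proj^* = \Proj$ of objects in $\Proj^=$ belongs to $\Proj^=$.

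First I check that every generic simple $V$ lies in $\Proj^=$. Since $V$ is simple, $\End_\cat(V) = \FK\,\Id_V$, so both $\qt_V$ and $\qt_V^*$ are determined by their values at $\Id_V$: respectively $\qd(V)$ and $\qt_{V^*}(\Id_{V^*}) = \qd(V^*)$. The hypothesis $\qd(V) = \qd(V^*)$ then gives $\qt_V = \qt_V^*$, and $V,V^* \in \Proj$ by the remark following Definition~\ref{D:genss}. Applying the weak direct-sum stability, every object of $\cat_g$ for $g \notin \X$---being a finite direct sum of generic simples---lies in $\Proj^=$.

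Next, given an arbitrary indecomposable $P \in \Proj$ with $P \in \cat_g$, the smallness of $\X$ ensures the existence of $h \in \Gr$ with $h \notin \X$ and $hg \notin \X$ (since $\X \cup \X g^{-1}$ cannot cover $\Gr$). Any generic simple $V \in \cat_h$ then yields $V \otimes P \in \cat_{hg}$ in the generic locus, hence $V \otimes P \in \Proj^=$ by the previous paragraph; similarly $P \otimes V \in \Proj^=$.

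The heart of the proof is to pass from $V \otimes P,\ P \otimes V \in \Proj^=$ to $P \in \Proj^=$, i.e., to $\qt_P = \qt_P^*$. Expanding the equality $\qt_{V \otimes P} = \qt_{V \otimes P}^*$ via the right trace axiom for $\qt$ (using $V \in \Proj$) and the left trace axiom for $\qt^*$ (using $P \in \Proj$) produces the master identity
\begin{equation}\label{EQ:planmaster}
\qt_V\bigl(\ptr_R(g)\bigr) = \qt_P^*\bigl(\ptr_L(g)\bigr), \qquad g \in \End_\cat(V \otimes P),
\end{equation}
and analogously, starting from $P \otimes V$,
$$\qt_P\bigl(\ptr_R(g')\bigr) = \qt_V^*\bigl(\ptr_L(g')\bigr), \qquad g' \in \End_\cat(P \otimes V).$$
Since $\qt_V = \qt_V^*$ by the first step, the right-hand sides of these two identities are values of a single functional on $\End_\cat(V)$ evaluated on partial traces. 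Coupled with the faithful embedding $\End_\cat(P) \hookrightarrow \End_\cat(V \otimes P)$ given by $f \mapsto \Id_V \otimes f$ (from the faithfulness hypothesis) and the Krull-Schmidt decomposition of $V \otimes P$ into generic simples---which makes $\End_\cat(V \otimes P)$ a finite-dimensional semisimple matrix algebra---this information pins down $\qt_P$ and $\qt_P^*$ as pullbacks of matching functionals, yielding $\qt_P = \qt_P^*$. Krull-Schmidt then extends the conclusion from indecomposable $P$ to arbitrary objects of $\Proj$.

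The main obstacle is this last step. Applying \eqref{EQ:planmaster} naively to $g = \Id_V \otimes f$ collapses it to $\tr_R(f)\,\qd(V) = \dim_L(V)\,\qt_P^*(f)$, which is vacuous precisely when $\dim_L(V) = 0$---a typical feature of generic simples in the non-semisimple settings for which the modified trace was designed. One must therefore exploit morphisms $g \in \End_\cat(V \otimes P)$ that non-trivially mix the $V$ and $P$ tensor factors, arising from the Krull-Schmidt decomposition of $V \otimes P$ into generic simples, to extract enough linear data to recover the equality $\qt_P = \qt_P^*$ on all of $\End_\cat(P)$.
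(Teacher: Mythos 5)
Your first three steps track the paper's argument: generic simples lie in $\Proj^=$ because $\qd(V)=\qd(V^*)$, weak direct-sum stability places every object of a generic graded piece $\cat_g$ ($g\notin\X$) in $\Proj^=$, and smallness of $\X$ supplies a generic $h$ so that $V\otimes P$ lands in a semisimple piece. But the step you yourself call the heart of the proof --- passing from $V\otimes P\in\Proj^=$ to $P\in\Proj^=$ --- is a genuine gap, and it is not how the paper proceeds. You correctly diagnose that the obvious attempt (testing on $g=\Id_V\otimes f$, or invoking the right-trace axiom on $P\otimes V$) collapses when the categorical dimension of $V$ vanishes, which is exactly the situation here; but your proposed remedy ("exploit morphisms mixing the tensor factors\dots pins down $\qt_P$ and $\qt_P^*$ as pullbacks of matching functionals") is an assertion, not an argument. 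You exhibit no such morphisms and no mechanism by which the semisimple structure of $\End_\cat(V\otimes P)$ would force $\qt_P=\qt_P^*$ on all of $\End_\cat(P)$. A tensor-cancellation property of this kind is not among the closure properties of $\I^=$ established in the preceding proposition, and there is no reason to expect it to follow from them.

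The missing idea is to use the projectivity of $P$ itself, not merely its membership in the ideal. Since $P$ is projective and $V$ is nonzero, $P$ is a retract of $V\otimes U$ for a suitable $U$: the paper phrases this as $P\in\I_V=\Proj$, and concretely $\tev_V\otimes\Id_P\colon V\otimes V^*\otimes P\to P$ is an epimorphism which splits because $P$ is projective. By the grading and the Hom-orthogonality of distinct graded pieces one may replace $U$ by its summand in $\cat_{h^{-1}g}$ (with $h,h^{-1}g\notin\X$), which is then a direct sum of generic simples and hence lies in $\Proj^=$. The pseudo-monoid properties you already quoted now finish the proof: $V\otimes U\in\Proj^=$ by stability under tensor product, and $P\in\Proj^=$ by stability under retracts; Krull--Schmidt and direct-sum stability handle non-indecomposable objects. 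No cancellation lemma and no analysis of the trace on $\End_\cat(V\otimes P)$ is needed, and this is the only place where projectivity (rather than the ideal axioms) is actually used.
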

\begin{proof}
  By assumption we have that any generic simple object $V$ is in
  $\Proj^=$ as $\End_\cat(V)\simeq\FK\simeq\End_\cat(V^*)$ so the
  right trace on these endomorphisms 
  is determined by $\qd(V)=\qd(V^*)$.  Since  $\Proj^=$ is stable by
  direct sums it contains any semi-simple object that is a direct
  sum of generic simple objects.  Let $W$ be a indecomposable projective object.  Then there exists $g$ such that  $W\in\cat_g$.   Let $h\in\Gr$ be such that
  $h,h^{-1}g\notin\X$ (recall that $\X$ is small) and let
  $V\in\cat_{h}$.  Then $W\in \I_V=\Proj$ thus there exists $U\in\cat$
  such that $W$ is a retract of $V\otimes U$.  As
  $\cat=\bigoplus_{g\in\Gr}\cat_g$, we can assume up to replacing $U$
  by one of its summand that $U\in\cat_{h^{-1}g}$.  Then $U$ is a
  semi-simple direct sum of generic simple objects and an element of
  $\Proj^=$.  Since $\Proj^=$ is a pseudo-monoid then $V\otimes U\in\Proj^=$ and
  $W\in\Proj^=$ as $W$ is a retract.  In addition, since $\Proj^=$ is stable by direct
  sums we get that $W\in\Proj^=$ even if $W\in\Proj$ is not
  indecomposable. 
  Thus, $\Proj^==\Proj$ and $\qt=\qt^*$ on $\Proj$.  In other words, $\qt$ is a trace.
\end{proof}

\section{Quantum groups at roots of unity}\label{S:RepQG}
In this section we first recall some of the deep results established by De
Concini, Kac, Procesi, Reshetikhin and Rosso in the series of papers
\cite{DK},\cite{DKP},\cite{DKP2} and \cite{DPRR}.  
Then we observe that the twist $\theta$ and the modified dimension
$\qd$ are generically self dual.   
 As a consequence we get the existence of a ribbon structure associated to the unrolled quantum group and the existence of a trace on $\Proj$ for the categories of weight modules over the small, unrolled and unrestricted groups.

\subsection{The small, the unrolled and the unrestricted}\label{SS:Uqg(H)}
Let $\g$ be a simple finite-dimensional complex Lie algebra of rank
$\rk$ and  dimension $2N+\rk$ with the following:
\begin{enumerate}
\item  a Cartan subalgebra $\h$,
\item a root
system consisting in simple roots
$\{\alpha_1,\ldots,\alpha_\rk\}\subset\h^*$,
\item  a Cartan
matrix $A=(a_{ij})_{1\leq i,j\leq \rk}$,
\item a set $\roots^{+}$ of $N$
positive roots,
\item a root lattice $L_R=\bigoplus_i\Z\alpha_i\subset\h^*$,
\item a scalar product $\left\langle{\cdot,\cdot}\right\rangle$ on the real span of $L_R$ given by its matrix $DA=(\ang{\alpha_i,\alpha_j})_{ij}$ where $D=\diag(d_1,\ldots,d_\rk)$ and the minimum of all the $d_i$ is 1.
\end{enumerate}
The Cartan subalgebra has a basis $\{H_i\}_{i=1\cdots\rk}$ determined by
$\alpha_i(H_j)=a_{ji}$ and its dual basis of $\h^*$ is the fundamental
weights basis which generate the lattice of weights 
$L_W$.  
Let
$\rho=\frac12\sum_{\alpha\in\roots^{+}}\alpha\in \La$.
 
Let $q$ be an indeterminate and for $i=1,\ldots,\rk$, let
$q_i=q^{d_i}$.
 Let $\ell$ be an integer such that $\ell\geq 2$ (and $\ell\notin 3\Z$ if $\g=G_2$).  Let $\xi=\e^{2\sqrt{-1}\pi/\ell}$ and $r=\frac{2\ell}{3+(-1)^\ell}$.   
  For $x\in \C$ and $k,l\in \N$ we use the notation:
$$
\xi^x=\e^{\frac{2i\pi x}\ell},\quad \qn x_q=q^x-q^{-x},\quad \qN x_q=\frac{\qn
  x_q}{\qn 1_q},\quad\qN k_q!=\qN1_q\qN2_q\cdots\qN k_q,\quad{k\brack
  l}_q=\frac{\qN{k}_q!}{\qN{l}_q!\qN{k-l}_q!}.
$$    

\newcommand{\Fq}{{\mathbb K_\ell}} 
Let $\Fq$ be the subring of $\C(q)$ made of fraction that have no
poles at $\xi$ ($\Fq$ is a localization of $\C[q]$).  A $\Fq$-module
can be specialized at $q=\xi$ (the specialization is the tensor product
with the $\Fq$-module $\C$ where $q$ acts by $\xi$).

For each lattice $L$ with $L_R\subset L\subset \La$, there is an
associated quantum group which contains the group ring of $L$: 
define $\Uqg_q^L$ as the $\Fq$-algebra with generators
$K_\beta, \, X_i,\, X_{-i}$ for $\beta\in L, \,i=1,\ldots,\rk$ and
relations
\begin{eqnarray}\label{eq:rel1}
 & K_0=1, \quad K_\beta K_\gamma=K_{\beta+\gamma},\, \quad K_\beta X_{\sigma
    i}K_{-\beta}=q^{\sigma \ang{\beta,\alpha_i}}X_{\sigma i}, & \\
 \label{eq:rel2} &[X_i,X_{-j}]=
 \delta_{ij}\frac{K_{\alpha_i}-K_{\alpha_i}^{-1}}{q_i-q_i^{-1}}, &  \\
 \label{eq:rel3} & \sum_{k=0}^{1-a_{ij}}(-1)^{k}{{1-a_{ij}} \brack
   k}_{q_i} X_{\sigma i}^{k} X_{\sigma j} X_{\sigma i}^{1-a_{ij}-k} =0,
 \text{ if }i\neq j &
\end{eqnarray}
where $\sigma=\pm 1$.   
Drinfeld and Jimbo consider the quantum group corresponding to
$L_R$.  
The algebra $\Uqg_q^L$ is a Hopf algebra with coproduct
$\Delta$, counit $\epsilon$ and antipode $S$ defined by
\begin{align*}
  \Delta(X_i)&= 1\otimes X_i + X_i\otimes K_{\alpha_i}, &
  \Delta(X_{-i})&=K_{\alpha_i}^{-1} \otimes
  X_{-i} + X_{-i}\otimes 1,\\
  \Delta(K_\beta)&=K_\beta\otimes K_\beta, & \epsilon(X_i)&=
  \epsilon(X_{-i})=0, &
  \epsilon(K_{\alpha_i})&=1,\\
  S(X_i)&=-X_iK_{\alpha_i}^{-1}, & S(X_{-i})&=-K_{\alpha_i}X_{-i}, &
  S(K_\beta)&=K_{-\beta}.
\end{align*}
The \emph{unrestricted quantum group} $\Uqg_\xi^L$ is the $\C$-algebra
obtained from $\Uqg_q^L$ by specializing $q$ to $\xi$.

The \emph{unrolled quantum group} $\UqgH=\UqgH_\xi$ is the algebra generated by
$K_\beta, X_i,X_{-i},H_i$ for $\beta\in L, \,i=1,\ldots,\rk$ with Relations
\eqref{eq:rel1},\eqref{eq:rel2},\eqref{eq:rel3} where $q=\xi$ plus the relations
\begin{align}\label{eq:relH}
 [H_i,X_{\sigma j}]=\sigma a_{ij}X_{\sigma j},
  &&[H_i,H_j]=[H_i,K_\beta]=0
\end{align}
where $\sigma=\pm 1$.  The algebra $\UqgH$ is a Hopf algebra with coproduct
$\Delta$, counit $\epsilon$ and antipode $S$ defined as above on $K_\beta, \,
X_i,\, X_{-i}$ and defined on the elements $H_i$ for $i=1,\ldots,\rk$ by
\begin{align*}
  \Delta(H_i)= 1\otimes H_i + H_i\otimes 1, && \epsilon(H_i)=0, &&S(H_i)=-H_i.
\end{align*}

For a fixed choice of a convex order
$\beta_*=(\beta_1,\ldots,\beta_N)$ of $\roots^{+}$ define recursively
a convex set of root vectors $(X_{\pm\beta})_{\beta\in\beta_*}$ in
$\Uqg_\xi^L$ (see Section 8.1 and 9.1 of \cite{CP}).  The \emph{small
  quantum group} $\wb\Uqg_\xi^L$ is the quotient of $\Uqg_\xi^L$ by
the relations
$$X_{\pm\beta}^\ro=0 \text{ and } K_\gamma^{2\ro}=1\text{ for all }\beta
\in\Delta_+,\gamma\in L_R.$$
The dimension of $\wb\Uqg_\xi^L$ is $\ro^{2N}(2\ro)^n[L\!:\! L_R]$.
 From now on if it is clear we will not write the superscript $L$ in
$\Uqg_\xi^L$ or $\wb\Uqg_\xi^L$.

\subsection{Pivotal structures}\label{SS:pivotalSt}  
The element $\phi=K_{2\rho}^{1-r}$ in $U$ where $U$ is $\Uqg_\xi$, $\UqgH$ or
$\wb\Uqg_\xi$ satisfies $S^2(x)=\phi x\phi^{-1}$.  It follows that $U$ is a pivotal Hopf algebra and the category of finite dimensional $U$-modules is a pivotal
$\C$-categories, for details see \cite{Bi, GP3}.
Here the dual is given by the dual vector space $V^* =\Hom_\C(V,\C)$
equipped with the transpose action composed with antipodal morphism
$S$.  The duality morphisms $\ev$ and $\coev$ are the standard
evaluation and coevaluation whereas
$\tev=\ev\circ\tau\circ(\phi\otimes1)$ and
$\tcoev=\tau\circ(\phi^{-1}\otimes1)\circ\coev$
 where $\tau$ is the flip.  
 
 \begin{remark} \label{R:Pivotalhadic} The $h$-adic version of the
   quantum group $\Uhg=U_h$ is the $\C[[h]]$-topological Hopf algebra
   with generators $X_i,X_{-i},H_i$ for $i=1,\ldots,\rk$ and Relations
   \eqref{eq:rel2}, \eqref{eq:rel3} and \eqref{eq:relH} where $q$ and
   $K_{\alpha_i}$ are replaced by $\e^{h/2}$ and $q_i^{H_i}$,
   respectively.  The category of finite dimensional topologically
   free $U_h$-modules is a pivotal category where the pivotal element
   is normally give by $K_{2\rho}$.  In this paper we use
   $\phi=K_{2\rho}^{1-r}$ instead of $K_{2\rho}$ which also defines a
   pivotal structure.  The reason we make this choice is because the
   pivotal structure for $\phi=K_{2\rho}^{1-r}$ is compatible with the
   braiding and trace consider in future subsections.  The pivotal
   structure corresponding to $K_{2\rho}$ does not have such
   compatibilities: the left and right twist differ by a square and
   the modified dimensions of $V$ and $V^*$ are not
   equal.  
\end{remark}

\section{Quantum groups at odd ordered roots of unity}
\label{S:RepQGodd}
  
In this section we consider the case when $\ell$ is an odd integer.
In this case, $\ell=\ro$.  To make the notation clear we will write
$\cat_{odd}$, $\cat^H_{odd}$ and $\wb\cat_{odd}$ for the categories in
this section.

\subsection{Weight modules} \label{SS:weightmod} Let $Z_0$ be the
subalgebra of $\Uqg_\xi^L$ generated by
$\{X_{\pm\beta}^\ro, K_\gamma^\ro:\,\beta\in\Delta_+,\,\gamma\in L\}$.
Then $Z_0$ is a sub-Hopf algebra contained in the center of
$\Uqg_\xi^L$.  Moreover, $Z_0$ is isomorphic to the Hopf algebra of
regular functions of a group $\Gr=\Spec(Z_0)$ which is Poisson-Lie
dual to a complex simple Lie group with Lie algebra $\g$ (see
\cite{DKP}).  
Here 
$\Spec(Z_0)$ is the algebraic variety
of algebra homomorphisms from $Z_0$ to $\C$.  

By a $\Uqg_\xi$-\emph{weight module} we mean a finite dimensional
module over $\Uqg$ which restrict to a semi-simple module over $Z_0$.
Since $X_{\pm\beta}^\ro=0$ and $K_\gamma^{2\ro}=1$ in $\wb\Uqg_\xi$ we
say all finite dimensional $\wb\Uqg_\xi$-modules are \emph{weight
  modules}.  Finally, a finite dimensional $\UqgH$-module $V$ is a
$\UqgH$-\emph{weight module} if it is a semi-simple module over the
subalgebra generated by $\{H_i:i=1\cdots n\}$ and for any
$\gamma=\sum_ic_i\alpha_i\in L$ then $q^{\sum_i d_ic_i H_i}=K_\gamma$
as operators on $V$.  Let $\cat_{odd}$ (resp. $\cat^H_{odd}$,
resp. $\wb{\cat}_{odd}$) be the category of $\Uqg_\xi$ (resp. $\UqgH$,
resp. $\wb\Uqg_\xi$) weight modules.  As the action of each $K_\beta$
is determined by the action of the $H_i$'s it follows that
$\cat^H_{odd}$ does not depend on the lattice $L$.

For $g\in\Gr= \Hom_{alg}(Z_0,\C)$ let $\cat_g$ be the full subcategory
of $\cat_{odd}$ whose objects are modules where each $z\in Z_0$ act by
$g(z)$.  For $g\in \Gr$ with $g(X_{\pm\beta}^{\ro})=0$ for all
$\beta\in \Delta_+$ we say $g$ is \emph{diagonal} and the modules of
$\cat_g$ are \emph{nilpotent}.  Similarly, $\cat^H_{odd}$ is graded by
the group $\D$ of diagonal elements of $\Gr$.

The dual of a weight module is a weight module.  This implies that the category $\cat_{odd}$ (resp. $\cat^H_{odd}$, resp. $\wb{\cat}_{odd}$) is a sub-pivotal $\C$-category of the category of all finite dimensional modules.

\subsection{The braiding on $\cat^H_{odd}$}\label{SS:BraidingCatH}
In this subsection we briefly recall the existence of a
braiding for $\cat^H_{odd}$.  We
refer to Theorem 41 of \cite{GP3} for details.

Recall the $h$-adic quantum group $\Uhg=U_h$ defined in Remark
\ref{R:Pivotalhadic}.  For a root $\beta\in L_R$, let
$q_\beta=q^{\ang{\beta,\beta}/2}$.  Let
$\exp_q(x)=\sum_{i=0}^\infty
\frac{(1-q)^ix^i}{(1-q^i)(1-q^{i-1})\cdots(1-q)}$.
Consider the following elements of the $h$-adic version of the quantum
group $U_h$:
\begin{equation}
  \label{eq:Rmatrix}
  {\Hh}=q^{\sum_{i,j}d_i(A^{-1})_{ij}H_i\otimes H_j}, \;\;
  \check{R}^h=\prod_{\beta\in 
    \beta_*}\exp_{q_\beta^{-2}}\Big((q_\beta-q_\beta^{-1})X_\beta\otimes
  X_{-\beta}\Big) 
\end{equation}
where the product is ordered by the convex order $\beta_*$ of
$\roots^+$.  It is well known that $R^{h}={\Hh}\check{R}^h$ defines a
quasi-triangular structure on $U_h$ (see for example \cite{Ma}).  
\\
Let $\exp^<_q(x)=\sum_{i=0}^{r-1}
\frac{(1-q)^ix^i}{(1-q^i)(1-q^{i-1})\cdots(1-q)}$.
Replacing the occurrences of $\exp_{q_\beta^{-2}}$ with
$\exp_{q_\beta^{-2}}^<$ in the formula of $\check R ^h$ gives an
element of $\Uqg_q\otimes\Uqg_q$ (seen as a sub-algebra of
$U_h[h^{-1}]^{\otimes2}$) called the truncated quasi-R-matrix.
Specializing for $q=\xi$ we get an element
$\check R^<\in\Uqg_\xi\otimes\Uqg_\xi$.  The action of
$R^{<}={\Hh}\check{R}^<$ determines a braiding on $\cat^H$ given by
$c_{V,W}(v\otimes w)=\tau(R^{<}(v\otimes w))$ where $\tau$ is the
flip.  Summarizing we have:
\begin{proposition}\label{P:catBraidedpivotal}
The category $\cat^H_{odd}$ is a braided 
  pivotal 
category.  
\end{proposition}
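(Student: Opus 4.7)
The plan is to establish pivotality and braidedness separately, invoking earlier sections of the paper for the former and Theorem~41 of~\cite{GP3} for the latter.

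For the pivotality: Subsection~\ref{SS:pivotalSt} already shows that the pivotal element $\phi=K_{2\rho}^{1-r}\in\UqgH$ implements $S^2$, so the full category of finite dimensional $\UqgH$-modules is pivotal. The remaining step is to verify that $\cat^H_{odd}$ is closed under tensor products and duals. For a tensor product $V\otimes W$ of weight modules, $\Delta(H_i)=H_i\otimes 1+1\otimes H_i$ acts diagonalizably, and the compatibility $\xi^{\sum_i d_ic_iH_i}=K_\gamma$ is preserved because $\Delta$ is a Hopf algebra map. For the dual $V^*$, the relation $S(H_i)=-H_i$ preserves diagonalizability and $S(K_\gamma)=K_{-\gamma}$ preserves the compatibility after transposition. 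Hence $\cat^H_{odd}$ is a sub-pivotal category, as noted at the end of Subsection~\ref{SS:weightmod}.

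For the braiding: the plan is to realize $c_{V,W}(v\otimes w)=\tau(R^{<}(v\otimes w))$, with $R^{<}=\Hh\check R^{<}$ defined in~\eqref{eq:Rmatrix}, as a natural family of isomorphisms satisfying the hexagon axioms. The Cartan part $\Hh$ acts as the diagonal invertible operator $\xi^{\sum_{i,j}d_i(A^{-1})_{ij}\lambda_i\mu_j}$ on pairs of weight vectors of weights $\lambda,\mu$, via the convention $\xi^x=e^{2i\pi x/\ell}$. The truncated quasi-$R$-matrix $\check R^{<}$ is a finite polynomial in the operators $X_\beta\otimes X_{-\beta}$, hence acts on any finite dimensional tensor product without convergence issues. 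Naturality of $c_{V,W}$ then reduces to the intertwiner relation $R^{<}\Delta=\Delta^{op}R^{<}$, and the two hexagon identities to the standard $(\Delta\otimes 1)R^{<}=R_{13}^{<}R_{23}^{<}$ and $(1\otimes\Delta)R^{<}=R_{13}^{<}R_{12}^{<}$.

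The main obstacle is transferring these quasi-triangularity identities from the $h$-adic algebra $U_h$, where they are classical for the un-truncated $\check R^{h}$ (see~\cite{Ma}), to the specialized truncation $\check R^{<}$ acting on $\cat^H_{odd}$. One must verify that the discarded tail of each exponential $\exp_{q_\beta^{-2}}-\exp_{q_\beta^{-2}}^{<}$ contributes trivially to these identities on $V\otimes W\in\cat^H_{odd}$; this rests on the vanishing of the coefficients $(1-q^k)$ at $q=\xi$ for $k\geq r$ together with the eventual nilpotency of the weight-space action of $X_{\pm\beta}$ on any specific finite dimensional module. Rather than redoing this delicate computation, I would invoke Theorem~41 of~\cite{GP3}, which establishes exactly the required braiding on $\cat^H_{odd}$.
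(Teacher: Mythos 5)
Your proposal is correct and follows essentially the same route as the paper: pivotality is inherited from the pivotal element $\phi=K_{2\rho}^{1-r}$ of Subsection~\ref{SS:pivotalSt} together with the closure of weight modules under duals and tensor products, and the braiding $c_{V,W}=\tau\circ R^{<}$ coming from the truncated quasi-$R$-matrix is justified by deferring to Theorem~41 of \cite{GP3}, exactly as the paper does in Subsection~\ref{SS:BraidingCatH}. The extra detail you supply on why the truncation causes no loss is a reasonable gloss on that citation, not a departure from it.
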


\subsection{Semi-simple}\label{SS:SemiSimple}
Here we explain why $\cat_{odd}$ and $\cat_{odd}^H$ are generically
semi-simple categories.  The $g$-th graded piece of $\cat_{odd}$ can
be described using ideals of the algebra: $\cat_g$ is the category of
weight module whose annihilator contains the two sided ideal $I_g$
generated by $\{z-g(z):z\in Z_0\}$.  Hence $\cat_g$ is identified with
the category of finite dimensional modules of the finite dimensional
algebra $\Uqg_\xi/I_g$.  
For generic $g$, we will prove that the algebra $\Uqg_\xi/I_g$ is
semi-simple which will imply the category $\cat_g$ is semi-simple.

A diagonal element of $g\in\Gr$ is \emph{regular} if
$g(K_{\pm\beta}^{\ro})\neq \pm 1$ for all $\beta\in \Delta_+$.  For
regular $g$, the category $\cat_g$ has $\ro^n$ non-isomorphic highest
weight irreducible modules $V_i$.  These modules are also modules over
$\Uqg_\xi^{L_R}$ which is a subset of $\Uqg_\xi^{L}$.  Then Corollary
3.2 of \cite{DK} implies that these modules have dimension $\ro^N$.
Therefore, we have $\sum_i\dim(V_i)^2=\ro^{2N+n}$.  The PBW theorem implies $\dim(\Uqg_\xi^{L})=\sum_i\dim(V_i)^2$.  Thus, Corollary \ref{C:DensityThm} implies that for regular $g$, the category $\cat_g$ semi-simple.  
 
De Concini and Kac consider certain derivations $\underline{e}_i$ and
$\underline{f}_i$ for $i=1,...,n$.  For each $t\in\C$, in
\cite[Section 3.5]{DK}, it is shown these derivations give
automorphisms $\exp t\underline{e}_i$ and $\exp t\underline{f}_i$ of
(a completion of) 
the algebra $\Uqg_\xi$.  Let $\widetilde{G} $ be the
group generated by all these automorphisms.  This group leaves 
$Z_0$ invariant 
and acts as holomorphic transformations on the
algebraic variety $\Spec(Z_0)$.  The action of $\widetilde{G} $ on
$\Spec(Z_0)$ is called the quantum coadjoint action.  Theorem 6.1 of
\cite{DKP} considers the orbits of this action.  In particular, part
(d) of this theorem says that the union of all $\widetilde{G}$
orbits which contain at least one regular element of $\Gr$ is Zariski
open and dense in $\Gr$.  This can be reformulated as there exist a
set $\X$ with the following two properties: (1) $\Gr\setminus\X$ is a
Zariski dense open subset of $\Gr$ and (2) for each
$g\in\Gr\setminus\X$ there is a regular $d$ and an outer automorphism
of $\Uqg_\xi$ inducing an isomorphism of algebras
$\Uqg_\xi/I_g\rightarrow \Uqg_\xi/I_d$.  Therefore, for each
$g\in \Gr\setminus\X$ the algebra $\Uqg_\xi/I_g$ is semi-simple.  By
replacing $\X$ with $\X\cup \X^{-1}$ we can assume $\X$ is symmetric.
Thus, we have shown $\cat_{odd}$ is a generically finitely
$\Gr$-semi-simple category with singular locus $\X$.

\begin{remark}\label{R:regularOrbitDiag}
  By definition, every $\widetilde{G} $ orbit in $\Gr$ that contains
  an element of $\Gr\setminus\X$ also contains a regular element and thus a
  diagonal element of $\Gr$.
  \end{remark}

Recall $\cat_{odd}^H$ is graded by the group $\D$.  Let $\X_\D$ be all the non-regular elements of $\D$.  It follows from Lemma 7.1 of \cite{CGP1} that $\cat_{odd}^H$ is a generically $\D$-semi-simple category
with the singular locus $\X_\D$.  

The results described here and in Section 
\ref{S:RepQG} imply the following
proposition.  
\begin{proposition}\label{P:catHGenSSBraided}
The category $\cat_{odd}^H$ is a generically $\D$-semi-simple braided 
  pivotal 
category.  
\end{proposition}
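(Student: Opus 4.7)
The plan is to simply assemble the three ingredients already laid out in the paper. The braided pivotal property is exactly the content of Proposition~\ref{P:catBraidedpivotal}, so the only new content in Proposition~\ref{P:catHGenSSBraided} is the generic $\D$-semi-simplicity. For this, the two things to verify are that the singular locus $\X_\D\subset\D$ (the set of non-regular diagonal elements) is small and symmetric, and that $\cat_g$ is semi-simple for every $g\in\D\setminus\X_\D$.

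First I would recall the $\D$-grading: a $\UqgH$-weight module $V$ decomposes according to how $Z_0$ acts through the quotient onto the diagonal elements, using the compatibility $q^{\sum_i d_ic_iH_i}=K_\gamma$ between the $H_i$ and the $K_\gamma$. Setting $\cat_g$ to be the full subcategory on which $Z_0$ acts by $g\in\D$ gives the $\D$-grading, and the duality/tensor axioms for a graded category follow from the Hopf structure exactly as they do in $\cat_{odd}$.

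Next I would handle the singular locus. Recall a diagonal $g$ is regular if $g(K_{\pm\beta}^{\ro})\neq\pm 1$ for all $\beta\in\roots^+$. Thus $\X_\D$ is a finite union of proper algebraic subvarieties of the torus $\D$, which is easily seen to be small in $\D$ in the sense of Definition~\ref{D:genss}: no finite union of translates of such a union of hypersurfaces can cover the torus. Symmetry $\X_\D^{-1}=\X_\D$ is immediate since the defining conditions are invariant under $g\mapsto g^{-1}$.

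Finally, for semi-simplicity of $\cat_g$ when $g\in\D\setminus\X_\D$, I would cite Lemma~7.1 of~\cite{CGP1}, which gives that the $\ro^n$ highest weight modules in $\cat_g$ are simple, pairwise non-isomorphic and of dimension $\ro^N$, so that $\cat_g$ is semi-simple (alternatively, one could repeat the density-theorem argument of Subsection~\ref{SS:SemiSimple}, applying Corollary~\ref{C:DensityThm} to the quotient of $\UqgH_\xi$ by the two-sided ideal generated by the central characters specifying $g$). Combining these observations with Proposition~\ref{P:catBraidedpivotal} yields the claim. The only real content here is the semi-simplicity input from~\cite{CGP1}; everything else is bookkeeping.
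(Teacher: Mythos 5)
Your proposal is correct and follows essentially the same route as the paper: the braided pivotal structure is quoted from Proposition~\ref{P:catBraidedpivotal}, and the generic $\D$-semi-simplicity with singular locus $\X_\D$ (the non-regular diagonal elements) is the content of Lemma~7.1 of \cite{CGP1}, exactly as the paper invokes it at the end of Subsection~\ref{SS:SemiSimple}. The only difference is that you spell out the smallness and symmetry of $\X_\D$, which the paper leaves implicit.
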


\subsection{The ribbon structure in $\cat_{odd}^H$.}\label{SS:RibbonStr}  Let $V$ be a generic simple object of $\cat_{odd}^H$.  We will show that $\theta_{V^*}=(\theta_V)^*$.  Since $V$ is simple, the morphism $\theta_V$ is a scalar times the identity of $V$.  Moreover, $(\theta_V)^*$ is the same scalar times $\Id_{V^*}$.   The module  $V$ is determined by its highest weight which is of the form $\lambda+(\ro-1)\rho$.  Its dual $V^*$  has highest weight $-\lambda+(\ro-1)\rho$.      To compute the twist on $V$ notice that $\Hh$ is the only part of the R-matrix contributing to the following computation:
$$ \theta_V(v)=\ptr_R(c_{V,V})(v)=q^{\ang{\lambda,\lambda}-(\ro-1)^2\ang{\rho,\rho}}v  $$
where $v$ is a highest weight vector of $V$ (see \cite{GP3} for more details).  As $\ang{-\lambda,-\lambda}=\ang{\lambda,\lambda}$ then a similar calculation shows that the scalar determining $\theta_{V^*}$ is equal to the scalar determining both $\theta_V$ and $(\theta_V)^*$. 
Thus,  Proposition \ref{P:catHGenSSBraided}  and the results of this subsection  imply that $\cat_{odd}^H$ satisfies the hypothesis of Theorem \ref{T:GenTwist} and so we have:
\begin{theorem}
The category $\cat_{odd}^H$ is a ribbon category.  
 \end{theorem}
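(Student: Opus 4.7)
The plan is to deduce the theorem as a direct application of Theorem~\ref{T:GenTwist}. By Proposition~\ref{P:catHGenSSBraided} the category $\cat_{odd}^H$ is a generically $\D$-semi-simple braided pivotal $\C$-category, which supplies all the structural hypotheses of Theorem~\ref{T:GenTwist}. Therefore it suffices to verify the pointwise twist-duality relation $\theta_{V^*}=(\theta_V)^*$ for every generic simple object $V\in\cat_{odd}^H$.

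Next I would describe the generic simple objects explicitly. The singular locus $\X_\D\subset\D$ consists of the non-regular diagonal elements, so a generic simple $V$ is a highest weight module whose highest weight I write as $\lambda+(\ro-1)\rho$ for some $\lambda\in\h^*$; its dual $V^*$ is then the highest weight module with highest weight $-\lambda+(\ro-1)\rho$. Because $V$ is simple, Schur's lemma tells us $\theta_V=t_V\,\Id_V$ for a scalar $t_V\in\C^\times$, and hence $(\theta_V)^*=t_V\,\Id_{V^*}$; likewise $\theta_{V^*}=t_{V^*}\,\Id_{V^*}$. So the problem reduces to comparing the two scalars $t_V$ and $t_{V^*}$.

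To compute $t_V$ I would evaluate $\theta_V=\ptr_R(c_{V,V})$ on a highest weight vector $v\in V$. Since $c_{V,V}$ comes from the truncated R-matrix $R^{<}=\Hh\,\check R^<$ of Subsection~\ref{SS:BraidingCatH}, the only contribution that survives when one caps off with the coevaluation and the pivotal element $\phi=K_{2\rho}^{1-r}$ on a highest weight vector is the diagonal piece $\Hh=q^{\sum_{i,j}d_i(A^{-1})_{ij}H_i\otimes H_j}$; the strictly lower terms of $\check R^<$ act as zero after the trace pairing. This computation, carried out in \cite{GP3}, gives
\[
t_V=q^{\ang{\lambda,\lambda}-(\ro-1)^2\ang{\rho,\rho}}.
\]
Applying the same computation to $V^*$ with highest weight $-\lambda+(\ro-1)\rho$ and using the symmetry $\ang{-\lambda,-\lambda}=\ang{\lambda,\lambda}$ of the bilinear form, I obtain $t_{V^*}=t_V$, so $\theta_{V^*}=(\theta_V)^*$ holds on every generic simple object.

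The main obstacle is really the R-matrix computation of the previous paragraph: one has to justify carefully that, on a highest weight vector, only the diagonal factor $\Hh$ contributes to $\ptr_R(c_{V,V})$, that the pivotal correction coming from $\phi=K_{2\rho}^{1-r}$ (rather than the more usual $K_{2\rho}$, cf.\ Remark~\ref{R:Pivotalhadic}) is compatible with this computation, and that no convergence or specialization subtlety arises when passing from $U_h$ to $\UqgH_\xi$ via the truncated quasi-R-matrix $\check R^<$. Once this scalar identification is in hand, Theorem~\ref{T:GenTwist} applies verbatim and yields that $\theta$ is a twist on all of $\cat_{odd}^H$, completing the proof.
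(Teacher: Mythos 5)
Your proposal is correct and follows essentially the same route as the paper: it invokes Proposition~\ref{P:catHGenSSBraided} for the generically semi-simple braided pivotal structure, computes the twist scalar $q^{\ang{\lambda,\lambda}-(\ro-1)^2\ang{\rho,\rho}}$ on a highest weight vector using only the diagonal part $\Hh$ of the R-matrix, and concludes $\theta_{V^*}=(\theta_V)^*$ from $\ang{-\lambda,-\lambda}=\ang{\lambda,\lambda}$ before applying Theorem~\ref{T:GenTwist}. This matches the paper's argument in Subsection~\ref{SS:RibbonStr}.
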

Note, in \cite{GP3} we showed that a subcategory of $\cat^H_{odd}$ is ribbon.

\subsection{The trace on the ideal of projective $\Uqg_\xi$-modules} \label{SS:modifiedDim}
 From Theorem 4.7.1 of \cite{GKP2} the ideal $\Proj$ of projective $\Uqg_\xi$-modules in $\cat_{odd}$ admits a unique nontrivial right trace which we denote by $\{\qt_V \}_{V\in \Proj}$.  In this subsection we use Theorem \ref{T:GenericDimTrace} to show this right trace is a trace.  
 
 To apply the theorem we need to show $ \qd(V)=\qd(V^*)$ for any
 generic simple $\Uqg_\xi$-module $V$.  We now explain how we reduce
 this equality to a computation related to open Hopf links.  Let $V_0$
 be the weight $\Uqg_\xi$-module determined by the highest weight
 vector $v_0$ such that $X_iv_0=0$ for $i=1,...,n$ and
 $K_\gamma v_0=q^{\sum_i d_ic_i \ang{(r-1)\rho,\alpha_i}}v_0$ for
 $\gamma=\sum_ic_i\alpha_i\in L$.  The module $V_0$ is isomorphic to
 $V_0^*$.  To simplify notation, in the following computations we
 identify these modules with an isomorphism.  Given a simple module and
 a morphism $g:W\to W$ define $\ang{g}\in \C$ as $g= \ang{g}\Id_W$.
 Suppose $f:V_0 \otimes V \to V_0 \otimes V$ is a morphism in $\cat$.
 Then
\begin{equation}\label{E:Compqd}
  \qd(V)\ang{\ptr_L(f)}  =\qt_{V}(\ptr_L(f) )=\qt_{V_0^*}( (\ptr_R(f))^*)
  =\qd(V_0^*)\ang{(\ptr_R(f))^* }= \qd(V_0)\ang{\ptr_R(f) }
\end{equation}
where the second equality is from Lemma 4(b) of \cite{GPV} and the
final equality holds because $V_0$ is isomorphic to $V_0^*$.
Therefore, to compute $\qd(V)$ we need to give a morphism $f:V_0 \otimes V \to V_0 \otimes V$ 
and compute its left and right partial trace.  We will do this now
using the braiding of $\Uhg$.

Recall the algebra $U_h$ and its R-matrix given in Subsection
\ref{SS:BraidingCatH}. Let $V_0^h$ be the simple $U_h$-weight module
with highest weight $(r-1)\rho$.  So $V_0^h$ has dimension $r^N$ and a
highest weight vector $v_0$ such that
$ H_i v_0=\ang{(r-1)\rho,\alpha_i}v_0$ for $i=1,...,n$.  The module
$V_0^h\otimes\C[h^{-1}]$ contains its $\Uqg_q$ analog
$V_0^q=\Uqg_q.v_0$ which specialize at $q=\xi$ to the
$\Uqg_\xi$-module $V_0$.

We consider the $U_h$-module $V_0^h\otimes U_h$.  
Let $W_0\subset L_R$ 
be the set of weights of $V_0^h$ and $p^\lambda$ the projector of $V_0^h$ on the weight space of weight $\lambda$.
In general, if $v$ and $w$  are vectors of weight $\lambda$ and $\mu$, respectively then  ${\Hh}(v\otimes   w)=q^{\ang{\lambda,\mu}}v\otimes w=v\otimes K_\lambda w$.  So the action of $\Hh$ on $V_0^h\otimes U_h$ is given by
\begin{equation}\label{E:HhFormula}
\Hh=\sum_{\lambda\in W_0}p^\lambda\otimes K_\lambda.
\end{equation}

Consider the endomorphism $f_{U_h}$ of $V_0^h\otimes U_h$ given by the
square of the braiding, that is $f_{U_h}$ is the multiplication by
$R_{21}R_{12}$.  Since $X^r_{\pm\alpha}$ vanishes on $V_0^h$ the only
portion of the quasi R-matrix contributing to $f_{U_h}$ is formed by
terms of the truncated quasi R-matrix.  Equations \eqref{eq:Rmatrix}
and \eqref{E:HhFormula} imply $f_{U_h}$ defines an element
$f_{\Uqg_q}\in \Aut_\Fq(V_0^q)\otimes\Uqg_q$.  Moreover, these equation imply that $q$ can be 
specialized at $\xi$  to obtain an element $f_{\Uqg_\xi}\in \Aut_\C(V_0)\otimes \Uqg_\xi$. Then for
any representation $\rho_V: \Uqg_\xi\to \End(V)$,
$f_V=(\Id\otimes\rho_V)(f_{\Uqg_\xi})$ is an automorphism of the
$\Uqg_\xi$-module $V_0\otimes V$ ($f_*$ defines a natural automorphism
of the functor $V_0\otimes*$).

\begin{lemma}\label{L:PtrRfV}
For any generic simple $\Uqg_\xi$-module $V$ we have $\ptr_R(f_V)=r^{N}\Id_{V_0}$.
\end{lemma}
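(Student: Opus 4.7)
The plan is to use Schur's lemma and a direct highest-weight calculation. The module $V_0$ is the simple $\Uqg_\xi$-module of highest weight $(r-1)\rho$ (the Steinberg-type module, classically irreducible of dimension $r^N$ at the odd root of unity $\xi$), so $\ptr_R(f_V) \in \End_{\Uqg_\xi}(V_0) = \C\,\Id_{V_0}$, i.e., $\ptr_R(f_V) = c\,\Id_{V_0}$ for some $c \in \C$. It therefore suffices to show that the coefficient of $v_0$ in $\ptr_R(f_V)(v_0)$ equals $r^N$.

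First I would compute $f_V(v_0 \otimes v)$ for an arbitrary $v \in V$. Since $X_\beta v_0 = 0$ for every $\beta \in \Delta_+$, every term of $\check R^<$ beyond $1\otimes 1$ vanishes on $v_0$ in the first factor. Combined with \eqref{E:HhFormula} this yields
$$R_{12}(v_0 \otimes v) = v_0 \otimes K_{(r-1)\rho}\, v.$$
Writing $R_{21} = \Hh\cdot(\check R^<)_{21}$ and splitting $(\check R^<)_{21}$ into its $1\otimes 1$ part and the remaining terms in which some $X_{-\beta}^{n_\beta}$ (with $n_\beta > 0$) acts on $v_0$, one obtains
$$f_V(v_0 \otimes v) = v_0 \otimes K_{2(r-1)\rho}\, v + w(v),$$
where $w(v) \in V_0 \otimes V$ is supported on $V_0$-weights strictly below $(r-1)\rho$.

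Next, the pivotal element satisfies the cancellation
$$\phi\, K_{2(r-1)\rho} = K_{2\rho}^{1-r}\, K_{2\rho}^{r-1} = 1,$$
which is precisely the reason for the normalization $\phi = K_{2\rho}^{1-r}$ (cf.\ Remark~\ref{R:Pivotalhadic}). Using $\coev_V(1) = \sum_i v_i \otimes v_i^*$ and $\tev_V(v \otimes v^*) = v^*(\phi v)$, the remainder $w(v)$ contributes nothing to the $v_0$-coefficient of $\ptr_R(f_V)(v_0)$ (it is of strictly smaller $V_0$-weight), whereas the main term $v_0 \otimes K_{2(r-1)\rho}\,v$ contributes $\tr_V(\phi\, K_{2(r-1)\rho}) = \tr_V(\Id_V) = \dim V$. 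By Subsection~\ref{SS:SemiSimple}, every generic simple $\Uqg_\xi$-module has dimension $r^N$, so the $v_0$-coefficient equals $r^N$, and hence $c = r^N$ by Schur. The only substantive input is the simplicity of $V_0$ and the known dimension of generic simples; once those are in hand the statement reduces to the pivotal cancellation and a highest-weight vanishing argument, with no need to directly analyze the lower-weight remainder $w(v)$.
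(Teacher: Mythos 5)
Your argument is correct and follows essentially the same route as the paper's proof: reduce to the $v_0$-coefficient via simplicity of $V_0$, observe that only the $\Hh$-part of $R_{21}R_{12}$ survives at top weight on the highest-weight vector, and use the cancellation $\phi\,K_{2(r-1)\rho}=1$ to get $\tr_V(\Id_V)=\dim V=r^N$. The only difference is that you are slightly more explicit about discarding the lower-weight remainder and about where $\dim V=r^N$ comes from, both of which the paper leaves implicit.
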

\begin{proof}
Consider a highest weight vector $v_0$ of $V_0$.  
 Then Equation \eqref{E:HhFormula} implies that 
$$\check R^<(v_0\otimes x)=v_0\otimes K_{2(r-1)\rho}x+\bp{\text{terms}\in V_0'\otimes \Uqg_\xi}$$
where $x\in \Uqg_\xi$ and  $V_0'$ is the sum of weight spaces of $V_0$ of weight strictly less than $(r-1)\rho$.  
Therefore,  
$$\ptr_R(f_V)(v_0) =\ptr_R(p^{(r-1)\rho}\otimes\rho_V(K_{2(r-1)\rho}))(v_0)
=v_0\left( \sum_i x_i^*\left(K^{1-r}_{2\rho}K_{2(r-1)\rho}x_i\right)\right)
=r^{N}v_0$$
where $x_i$ is a basis of $V$ and   $x_i^*$ is its dual basis.  Since $V_0$ is simple then $\ptr_R(f_V)=r^{N}\Id_{V}$.
\end{proof}

Let $V$ be a simple module of $\cat_g$ where $g\in\Gr\setminus \X$.
Then there exists an outer automorphism $\gamma\in\wt G$ such that
$\rho_V\circ\gamma$ is a nilpotent representation of $\Uqg_\xi$ (see Remark \ref{R:regularOrbitDiag}).  We
say $\rho_V\circ\gamma$ is a {\em nilpotent deformation} of $V$.
\begin{lemma}\label{L:computingLeftptrf} 
  The element $\delta_q=\ptr_L(f_{\Uqg_q})$ belongs to the center of
  $\Uqg_q$  
 and can be specialized to obtain an element $\delta_\xi$ in the center of $\Uqg_\xi$.  
   Then for any generic simple $\Uqg_\xi$-module $V$
 we have $\ptr_L(f_{V})=\rho_V(\delta_\xi)$.  Moreover, 
  \begin{equation}
  \label{E:ptrnil}
  \ptr_L(f_{V})=\prod_{\alpha\in\roots^{+}}
  \dfrac{\qn{\ro\ang{\wtb,\alpha}}}{\qn{\ang{\wtb,\alpha}}}\Id_{V}
  \end{equation}
  where $\wtb+(r-1)\rho$ is the highest weight of a nilpotent
  deformation of $V$.
  \end{lemma}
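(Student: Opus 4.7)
The plan is to prove centrality, specialization, and the product formula separately, then assemble them. For centrality of $\delta_q \in \Uqg_q$, I would use the standard argument that $R_{21}R_{12}$ intertwines the coproduct, i.e.\ $R_{21}R_{12}\Delta(u) = \Delta(u)R_{21}R_{12}$ for all $u$, a formal consequence of the quasi-triangular axioms in the $h$-adic formulation. Applying $\tr_L(\phi\,\cdot\,) \otimes \Id$ to this identity converts it into $\delta_q u = u\delta_q$, so $\delta_q \in Z(\Uqg_q)$. Since the truncated quasi-$R$-matrix $\check R^<$ and the Cartan factor $\Hh$ are finite, explicit elements over $\Fq$, the element $f_{\Uqg_q}$ specializes at $q = \xi$ to $f_{\Uqg_\xi}$ and hence $\delta_q$ specializes to $\delta_\xi \in Z(\Uqg_\xi)$. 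The identity $\ptr_L(f_V) = \rho_V(\delta_\xi)$ is then immediate from $f_V = (\Id_{V_0} \otimes \rho_V)(f_{\Uqg_\xi})$ and the fact that $\ptr_L$ commutes with applying $\rho_V$ on the second factor.

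Next I would compute $\ptr_L(f_{V'})$ explicitly when $V'$ is a simple nilpotent $\Uqg_\xi$-module of highest weight $\wtb + (r-1)\rho$. Choosing highest weight vectors $v_0$ of $V_0$ and $v'$ of $V'$, every nontrivial term of $\check R^<_{12}$ acts on $v_0 \otimes v'$ by applying some $X_\beta$ to $v_0$, which vanishes; likewise every nontrivial term of $\check R^<_{21}$ applies $X_\beta$ to $v'$, which also vanishes. Hence on $v_0 \otimes v'$ the endomorphism $f_{V'} = R_{21}R_{12}$ reduces to the diagonal Cartan part $\Hh^2$. Expressing the left partial trace as a sum over a weight basis of $V_0$ weighted by the pivotal element $\phi = K_{2\rho}^{1-r}$, and applying the quantum Weyl character formula for $V_0$ (whose highest weight $(r-1)\rho$ identifies it with the quantum Steinberg module), yields after cancellation with the Weyl denominator the product $\prod_{\alpha \in \roots^{+}} \qn{r\ang{\wtb,\alpha}}/\qn{\ang{\wtb,\alpha}}$.

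Finally, to transfer the formula from the nilpotent deformation $V'$ to $V$ itself, let $\gamma \in \wt G$ be the outer automorphism with $\rho_{V'} = \rho_V \circ \gamma$. Then $\rho_V(\delta_\xi) = \ptr_L(f_V)$ while $\rho_V(\gamma(\delta_\xi)) = \ptr_L(f_{V'})$, so it suffices to show $\gamma(\delta_\xi) = \delta_\xi$. For this I would invoke the structural results from \cite{DKP,DKP2}: there is a quantum Harish-Chandra subalgebra $Z_{HC} \subset Z(\Uqg_\xi)$ pointwise fixed by the quantum coadjoint action of $\wt G$, and elements specialized from the generic center $Z(\Uqg_q)$ (which is itself the Harish-Chandra center in the generic setting) land in $Z_{HC}$. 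This places $\delta_\xi$ in the $\wt G$-invariant part of the center, so $\gamma(\delta_\xi) = \delta_\xi$ and hence $\rho_V(\delta_\xi) = \rho_{V'}(\delta_\xi)$, completing the argument. This final transfer step — recognizing the specialization of a generic central element as lying in the $\wt G$-invariant part of the root-of-unity center — is the main conceptual obstacle, as it rests on the deep structural results of De~Concini--Kac--Procesi on $Z(\Uqg_\xi)$.
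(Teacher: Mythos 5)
Your first and third steps track the paper's argument closely: centrality of $\delta_q$ via the intertwining property of $R_{21}R_{12}$ (the paper phrases this as: $\ptr_L(f_{U_h})$ is a module endomorphism of $U_h$ given by left multiplication by $\delta_h$, hence $\delta_h$ is central), and the transfer from $V$ to its nilpotent deformation via the fact that $\wt G$ fixes pointwise the specialization of the generic center --- which is exactly the paper's appeal to the definition of the derivations $\underline{e}_i,\underline{f}_i$. Those parts are sound.

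The genuine gap is in your second step, the explicit evaluation of $\ptr_L(f_{V'})$ for $V'$ nilpotent. You correctly observe that $R_{21}R_{12}$ reduces to the Cartan part $\Hh^2$ \emph{on the single vector} $v_0\otimes v'$, but the \emph{left} partial trace closes the $V_0$ strand, so you must act on $e\otimes v'$ for \emph{every} weight vector $e$ of $V_0$ and sum; for $e\neq v_0$ the terms of $\check R^<_{12}$ with nontrivial $X_\beta$ do not kill $e$, and after $\check R^<_{12}$ has lowered $v'$ the raising operators in $\check R^<_{21}$ no longer annihilate the second factor. These off-Cartan terms contribute to the coefficient of $v'$, and they are not negligible: they are precisely what produces the $\rho$-shift in the character evaluation point. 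A rank-one check makes this concrete: for the two-dimensional $\Uslt$-module closed on the left around a highest-weight module of weight $\lambda$, the Cartan-only computation gives $q^{\lambda-1}+q^{1-\lambda}$ while the true value of $\ptr_L(R_{21}R_{12})$ is $q^{\lambda+1}+q^{-\lambda-1}$. In the present setting the Cartan-only sum evaluates the character of $V_0$ at $q^{2(\wtb+2(r-1)\rho)}$ rather than at the point giving $\prod_{\alpha\in\roots^{+}}\qn{\ro\ang{\wtb,\alpha}}/\qn{\ang{\wtb,\alpha}}$, and these differ at $\xi$ (the numerators agree because $\xi^{r}=1$, but the denominators pick up a shift by $-2\ang{\rho,\alpha}$). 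You appear to have transplanted the argument that works for $\ptr_R(f_V)$ in Lemma~\ref{L:PtrRfV} --- where one closes $V$ and evaluates the resulting endomorphism of $V_0$ on its highest-weight vector, so the reduction to $\Hh$ is legitimate --- to the left closure, where it fails. To repair this step you must either carry out the genuine computation (via the Drinfeld map and the Harish-Chandra-type argument that identifies $(\tr_{V_0}(\phi^{-1}\,\cdot)\otimes\Id)(R_{21}R_{12})$ with a $\rho$-shifted character evaluation), or simply cite the nilpotent-case computation, which is what the paper does by invoking Proposition~45 of \cite{GP3}.
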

\begin{proof}
Recall the morphism  $f_{U_h}: V_0^h\otimes U_h\to V_0^h\otimes U_h$ is given by multiplication on the left by a truncated part  of $R_{21}R_{12}$.  Using  Equations \eqref{eq:Rmatrix}
and \eqref{E:HhFormula} a direct computation shows that $\ptr_L(f_{U_h})$ is a $U_h$-endomorphism given by left multiplication of an element $\delta_h$ of  $U_h$.  Thus, $\delta_h$ is an element of the center of $U_h$.  Moreover, the direct calculation above shows $\delta_h$ is a element of $\Uqg_q$ seen as a sub algebra of $U_h[h^{-1}]$.  By definition this element is equal to $\delta_q=\ptr_L(f_{\Uqg_q})$.  Therefore, $\delta_q$ is central since $\delta_h$ is central.   

Let $Z_\xi$ be the specialization at $q=\xi$ of the center of
$\Uqg_q$.  
Then by definition of the derivations  $\underline{e}_i$ and $\underline{f}_i$ it follows that the automorphisms of $\wt G$ acts trivially on $Z_\xi$.  
Since $\delta_\xi\in Z_\xi$, it acts as the
same scalar endomorphism on $V$ and on any of its nilpotent
deformation.  Hence the general formula in Equation \eqref{E:ptrnil} can be deduce from the case when $V$ is the nilpotent module with highest weight $\wtb+(r-1)\rho$ which was computed in Proposition 45 of \cite{GP3}.  
\end{proof}

\begin{theorem}\label{T:RightTraceIsTrace}
The right trace $\{\qt_V \}_{V\in \Proj}$ on $\Proj$ in $\cat_{odd}$ is a trace.  This trace is unique up to a global scalar since this is true for the right trace.   
\end{theorem}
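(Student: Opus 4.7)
The plan is to apply Theorem \ref{T:GenericDimTrace} to the pair $(\cat_{odd},\qt)$. The category $\cat_{odd}$ is pivotal by Subsection \ref{SS:pivotalSt}, generically $\Gr$-semi-simple by Subsection \ref{SS:SemiSimple}, and the right trace $\qt$ on $\Proj$ is provided by Theorem 4.7.1 of \cite{GKP2}. The only nontrivial hypothesis to verify is $\qd(V)=\qd(V^*)$ for every generic simple $V$. Once this is established, Theorem \ref{T:GenericDimTrace} yields that $\qt$ is a two-sided trace; uniqueness of the resulting trace up to a global scalar follows from the corresponding uniqueness of the right trace, since any trace is in particular a right trace.

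For the equality $\qd(V)=\qd(V^*)$, I combine Equation \eqref{E:Compqd} with Lemmas \ref{L:PtrRfV} and \ref{L:computingLeftptrf} to obtain
$$\qd(V)=\frac{r^N\,\qd(V_0)}{\ang{\ptr_L(f_V)}},\qquad\ang{\ptr_L(f_V)}=\prod_{\alpha\in\roots^+}\frac{\qn{\ro\ang{\wtb,\alpha}}}{\qn{\ang{\wtb,\alpha}}},$$
where $\wtb+(r-1)\rho$ is the highest weight of a nilpotent deformation of $V$. Since the singular locus $\X$ is symmetric, $V^*$ is itself a generic simple module and admits its own nilpotent deformation; by the standard fact that a simple nilpotent module of highest weight $\nu$ has dual of highest weight $-w_0\nu$ (with $w_0$ the longest element of the Weyl group) together with $w_0\rho=-\rho$, the relevant parameter for $V^*$ is $-w_0\wtb$.

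The main step is then a combinatorial observation: since $-w_0$ permutes $\roots^+$ bijectively and $\qn{\cdot}$ is odd, the substitution $\alpha\mapsto-w_0\alpha$ gives
$$\prod_{\alpha\in\roots^+}\frac{\qn{\ro\ang{-w_0\wtb,\alpha}}}{\qn{\ang{-w_0\wtb,\alpha}}}=\prod_{\alpha\in\roots^+}\frac{\qn{\ro\ang{\wtb,\alpha}}}{\qn{\ang{\wtb,\alpha}}},$$
so $\ang{\ptr_L(f_{V^*})}=\ang{\ptr_L(f_V)}$ and hence $\qd(V)=\qd(V^*)$.

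The step requiring most care is justifying that $V^*$ admits a nilpotent deformation with the claimed highest weight. Concretely, one must show that if $V\in\cat_g$ ($g\notin\X$) has a nilpotent deformation obtained through an outer automorphism $\gamma\in\wt G$ acting on a diagonal element in the $\wt G$-orbit of $g$, then $V^*\in\cat_{g^{-1}}$ admits an analogous nilpotent deformation. This follows from the compatibility of duality with the quantum coadjoint action of $\wt G$ on $\Spec(Z_0)$, or equivalently from the observation that the central element $\delta_\xi\in Z_\xi$ of Lemma \ref{L:computingLeftptrf} acts on $V^*$ by the same scalar as on $V$.
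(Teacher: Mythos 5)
Your overall strategy is exactly the paper's: verify $\qd(V)=\qd(V^*)$ for every generic simple $V$ by combining Equation~\eqref{E:Compqd} with Lemmas~\ref{L:PtrRfV} and~\ref{L:computingLeftptrf}, then invoke Theorem~\ref{T:GenericDimTrace}, with uniqueness inherited from the right trace. The one place you go astray is in identifying the highest weight of the dual of a nilpotent deformation: you invoke the classical rule $\nu\mapsto -w_0\nu$, but that rule governs the ``small'' simple modules (Weyl-type modules), whereas the generic nilpotent simples here all have dimension $\ro^N$ and their weights fill the box $\{\lambda-\sum_\beta k_\beta\beta:0\le k_\beta\le \ro-1\}$; hence the lowest weight is $\lambda-2(\ro-1)\rho$ and the dual has highest weight $-\lambda+2(\ro-1)\rho=-\wtb+(\ro-1)\rho$, i.e.\ the relevant parameter is $-\wtb$, not $-w_0\wtb$. (For $\slt$ one has $-w_0=\mathrm{id}$, so your formula would predict these modules are self-dual, which fails for generic $\wtb$.) Fortunately this slip is harmless: since $\qn{\cdot}$ is odd, each factor $\qn{\ro\ang{\wtb,\alpha}}/\qn{\ang{\wtb,\alpha}}$ is individually invariant under $\wtb\mapsto-\wtb$, so the product in Lemma~\ref{L:computingLeftptrf} is unchanged and $\qd(V)=\qd(V^*)$ follows without any permutation of $\roots^{+}$ by $-w_0$. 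Your closing remark that $\delta_\xi$ acts by the same scalar on $V$ and $V^*$ would indeed give a shortcut (it is essentially how the paper argues in the $\slt$ section via $S(\Omega)=\Omega$), but as stated it is an assertion rather than a proof for general $\g$; the explicit weight computation is the route that actually closes the argument.
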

\begin{proof}
Let $V$ be a generic simple $\Uqg_\xi$-module.  Combining Equation \eqref{E:Compqd} and Lemmas \ref{L:PtrRfV} and \ref{L:computingLeftptrf} we have 
\begin{equation}\label{E:qdV}
\qd(V)=\frac{ \qd(V_0)\ang{\ptr_R(f) }}{\ang{\ptr_L(f)}}= \qd(V_0)\ro^N\prod_{\alpha\in\roots^{+}}
  \dfrac{\qn{\ang{\wtb,\alpha}}}{\qn{\ro\ang{\wtb,\alpha}}}
\end{equation}
where  $\wtb+(r-1)\rho$ is the highest weight of a nilpotent
  deformation of $V$.  The dual of such a nilpotent module has highest weight $-\wtb+(r-1)\rho$.  Thus, Equation \eqref{E:qdV} implies  $\qd(V)=\qd(V^*)$.  Now from Subsections \ref{SS:pivotalSt} and \ref{SS:SemiSimple}, $\cat_{odd}$ is a generically $\Gr$-semi-simple 
  pivotal 
   category so the theorem follows from Theorem \ref{T:GenericDimTrace}.  
\end{proof}

 \begin{corollary}
   There exists a unique trace (up to global scalar) on the projective
   modules of $\wb\cat_{odd}$.
\end{corollary}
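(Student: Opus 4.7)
The plan is to realize $\wb\cat_{odd}$ as a pivotal direct summand of $\cat_{odd}$ and then transport the trace of Theorem~\ref{T:RightTraceIsTrace} along this inclusion.

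First I would observe that the defining relations $X_{\pm\beta}^{\ro}=0$ and $K_\gamma^{2\ro}=1$ of $\wb\Uqg_\xi$ lie entirely in the central subalgebra $Z_0\subset\Uqg_\xi$. Consequently, a weight $\Uqg_\xi$-module factors through $\wb\Uqg_\xi$ if and only if the character $g\in\Gr=\Spec(Z_0)$ by which $Z_0$ acts belongs to the finite set
$$S=\bigl\{g\in\Gr:g(X_{\pm\beta}^{\ro})=0\text{ and }g(K_\gamma^{2\ro})=1\text{ for all }\beta\in\roots^{+},\gamma\in L\bigr\}.$$
This identifies $\wb\cat_{odd}$ with the pivotal direct summand $\bigoplus_{g\in S}\cat_g$ of $\cat_{odd}$. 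In particular every projective object of $\wb\cat_{odd}$ is already a projective object of $\cat_{odd}$, so the trace $\qt$ supplied by Theorem~\ref{T:RightTraceIsTrace} restricts to a family $\bar\qt$ of linear maps on $\Proj(\wb\cat_{odd})$. Cyclicity and the partial-trace axiom are intrinsic to the pivotal structure and so pass unchanged to the subcategory, making $\bar\qt$ a trace. It is nonzero on the Steinberg module $V_0\in\wb\cat_{odd}$ of highest weight $(\ro-1)\rho$, which is simple and projective in both categories, with $\bar\qt_{V_0}(\Id_{V_0})=\qd(V_0)\neq 0$.

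For uniqueness up to global scalar, I would exploit the fact that $V_0$ generates $\Proj(\wb\cat_{odd})$ in the sense that every projective $P\in\wb\cat_{odd}$ is a retract of $V_0\otimes W$ for some $W\in\wb\cat_{odd}$; this is a classical feature of the Steinberg module, following from the fact that $V_0\otimes-$ lands in the projective ideal and that a suitable $W$ recovers any prescribed indecomposable projective as a summand. Given a second trace $\bar\qt'$ on $\Proj(\wb\cat_{odd})$ and retract data $\alpha:P\to V_0\otimes W$, $\beta:V_0\otimes W\to P$ with $\beta\alpha=\Id_P$, the trace axioms give
$$\bar\qt'_P(f)=\bar\qt'_{V_0\otimes W}(\alpha f\beta)=\bar\qt'_{V_0}(\ptr_R(\alpha f\beta))$$
for every $f\in\End_{\wb\cat_{odd}}(P)$. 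Since $V_0$ is simple, $\ptr_R(\alpha f\beta)$ is a scalar multiple of $\Id_{V_0}$, so $\bar\qt'_P(f)$ is entirely determined by the single value $\bar\qt'_{V_0}(\Id_{V_0})$. Comparing with the analogous formula for $\bar\qt$ shows that $\bar\qt'$ and $\bar\qt$ agree up to the global scalar $\bar\qt'_{V_0}(\Id_{V_0})/\bar\qt_{V_0}(\Id_{V_0})$.

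The step I expect to be the main obstacle is the verification of the Steinberg generation property across all blocks $\cat_g$, $g\in S$: once this is in place, both the existence (by restriction from $\cat_{odd}$) and the one-parameter uniqueness are formal consequences of the pivotal trace axioms and the simplicity of $V_0$.
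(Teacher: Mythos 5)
Your proposal is correct, and its existence half is exactly the paper's argument: both identify $\wb\cat_{odd}$ with the direct summand $\bigoplus_{g}\cat_g$ of $\cat_{odd}$ over the characters $g$ killing $X_{\pm\beta}^{\ro}$ and sending $K_\gamma^{\ro}$ to $\pm1$, observe that projectives of $\wb\cat_{odd}$ are therefore projective in $\cat_{odd}$, and restrict the trace of Theorem~\ref{T:RightTraceIsTrace}. Where you diverge is uniqueness: the paper simply cites Theorem~4.7.1 of \cite{GKP2}, which asserts that the projective ideal of $\wb\Uqg_\xi$-modules carries a unique nontrivial \emph{right} trace (and a two-sided trace is in particular a right trace), whereas you re-derive this from scratch via the Steinberg module $V_0$ generating $\Proj(\wb\cat_{odd})$ as a right ideal, together with simplicity of $V_0$. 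Your argument is essentially the proof underlying the cited theorem, so nothing is lost; what you gain is self-containment, what you lose is having to establish the generation property you flag as the main obstacle. That property does hold and is not hard: $V_0$ is simple, projective and self-dual (the highest weight $(\ro-1)\rho$ is fixed by $-w_0$), so $\Hom(V_0\otimes V_0^*,\unit)\neq0$ and the projective object $V_0\otimes V_0^*$ contains the projective cover $P(\unit)$ of the trivial module as a retract; since $P(\unit)\otimes P\twoheadrightarrow P$ splits for any projective $P$, every projective is a retract of $V_0\otimes V_0^*\otimes P$. One small caveat worth recording: your formula shows any right trace $\qt'$ satisfies $\qt'_P(f)=\lambda\,\qt'_{V_0}(\Id_{V_0})$ with $\lambda$ independent of $\qt'$, which gives uniqueness up to scalar among \emph{all} traces and identifies the restricted trace as nontrivial exactly when $\qd(V_0)\neq0$; this nonvanishing is part of the normalization built into Equation~\eqref{E:qdV} rather than something to be checked separately.
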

\begin{proof}
  Consider the projection morphism $\Uqg_\xi\to \wb\Uqg_\xi$.  Using
  this morphism each projective $\wb\Uqg_\xi$ weight module becomes a
  projective $\Uqg_\xi$ weight module.  In fact we have
  $\wb\cat_{odd}\cong\bigoplus_{g\in \Gr_0}\cat_g$ where
  $\Gr_0=\{g\in
  \Gr:g(X^r_{\pm\beta})=0,\,g(K_\beta^r)=\pm1,\,\beta\in\Delta_+\}$.

  It follows that the trace of Theorem \ref{T:RightTraceIsTrace}
  induces a trace on the idea of projective $\wb\Uqg_\xi$-module.
  This trace is unique because from Theorem 4.7.1. of \cite{GKP2}
  there exists unique nontrivial right trace on the projective ideal
  of $\wb\Uqg_\xi$-modules.
\end{proof}

\section{The case of $\slt$}\label{SS:ccl}
\newcommand{\ve}{{\varepsilon}}
\newcommand{\vp}{{\varphi}}
\newcommand{\Ch}{{\mathsf C}}
%
In the previous subsections we considered three versions of the
quantum groups associated to any Lie algebra $\g$, associated to
different lattices $L$ when $q$ was a root of unity of odd order.  The main reason the we do not treat even ordered roots of unity above is because we use results on the quantum coadjoint action which at this point require odd order roots of unity (see \cite{DKP}).   In
this section we treat the case of 
\emph{all} orders of roots of unity for the Drinfeld-Jimbo quantization of  $\slt$ (i.e. the root lattice $L_R$).  In this case we can prove the needed results involving the quantum coadjoint action directly using the casimir element.  

Let $\ell\ge2$, $\xi=\e^{2i\pi/\ell}$ and $r=\frac{2\ell}{3+(-1)^\ell}$.
As above we consider the three versions of quantum $\slt$ associated
with the root lattice:
$$\Uqg_\xi(\slt)=\Uqg=\ang{E,F,K^{\pm1}|KE-\xi^2EK=FK-\xi^2KF=[E,F]-\frac{K-K^{-1}}{\xi-\xi^{-1}}=0},$$
$$\Uqg_\xi^H(\slt)=\Uqg^H=\ang{\Uqg,H|[H,K]=[H,E]-2E=[H,F]+2F=0} \et$$
$$\wb \Uqg=\Uqg/(E^r,F^r,K^{2r}-1).$$
As above, let $\cat_{\slt}$, $\cat_{\slt}^H$ and $\wb \cat_{\slt}$  be  their categories of weight modules, respectively.  These categories are graded as follows: 
Let $Z_0$ be the subalgebra generated by $E^r,F^r$ and $K^r$.  
The center of $\Uqg$ is generated by $Z_0$ with
the casimir 
$$\Omega=\qn1^2EF+K\xi^{-1}+K^{-1}\xi=\qn1^2FE+K\xi+K^{-1}\xi^{-1}$$
which satisfies the polynomial equation
$\Ch_r\bp{\Omega}=\qn1^{2r}E^rF^r-(-1)^\ell (K^r+K^{-r})$ where $\Ch_r$
is the renormalized $r^{th}$ Chebyshev polynomial
(determined by $\Ch_r(2\cos\theta)=2\cos(r\theta)$).   

The set 
$$\Gr=\qn{ M\bp{\kappa,\ve,\vp}=\bp{\bp{\begin{array}{cc}
      1&\ve\\0&\kappa
    \end{array}},\bp{\begin{array}{cc}
      \kappa&0\\\vp&1
    \end{array}}}:\ve,\vp\in\C,\kappa\in\C^*}$$
is a group where the multiplication is given by component wise matrix multiplication.  
Also, as above $ \Hom_{alg}(Z_0,\C)$ is a group where the multiplication is given by $g_1g_2=g_1\otimes g_2\circ\Delta$.  Then the morphism $ \Hom_{alg}(Z_0,\C)\to \Gr$ given by  
$$M\bp{\kappa,\ve,\vp}(K^r)=\kappa,\quad M\bp{\kappa,\ve,\vp}(E^r)=\qn1^{-r}\ve\et
M\bp{\kappa,\ve,\vp}(F^r)=(-1)^\ell\qn1^{-r}\vp\kappa^{-1}.
$$
is a group homomorphism.  We use this morphism to identify $\Gr$ and $ \Hom_{alg}(Z_0,\C)$.

For each
$g\in\Gr$
let $\cat_g$ be the full subcategory of $\cat_{\slt}$ whose objects are modules where each $z\in Z_0$ act by $g(z)$.  As above, for $g\in \Gr$ with $g(E^{\ro})=g(F^{\ro})=0$ we say $g$ is \emph{diagonal} and the modules of $\cat_g$ are \emph{nilpotent}.   Also, a diagonal element $g$ is \emph{regular} if $g(K^r)\neq\pm1$.  
  The category $\cat_{\slt}^H$ is graded by the group $\D \simeq(\C/2\Z,+)\simeq \h^*/L_R\simeq(\C^*,\times)$ of diagonal elements of $\Gr$. Finally $\wb \cat_{\slt}$ is
$\Z/2\Z$-graded as a full subcategory of $\cat_{\slt}$.   

As above it is known that  the element $\phi=K_{2\rho}^{1-r}$ in $\Uqg$, $\UqgH$ and $\wb\Uqg$  satisfies $S^2(x)=\phi x\phi^{-1}$.  It follows that   $\cat_{\slt}$, $\cat^H_{\slt}$ and $\wb\cat_{\slt}$ are all  pivotal $\C$-categories.  Moreover, $\UqgH$ is braided, see \cite{GP3} for the odd case and \cite{GPT1} for the even.

\begin{lemma}\label{L:catsl2GenSS}
The category $\cat_{\slt}$ is generically $\Gr$-semi-simple category with
singular locus 
$$\X=\qn{
  M\bp{\kappa,\ve,\vp}:\kappa+\dfrac1\kappa-\dfrac{\ve\vp}\kappa=\pm2}.$$ 
\end{lemma}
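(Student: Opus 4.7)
The plan is to identify the singular locus $\X$ via the action of the Casimir $\Omega$, construct a maximal family of irreducible modules for each $g \in \Gr\setminus\X$, and invoke Corollary~\ref{C:DensityThm} to deduce semi-simplicity of $\cat_g$. First I would compute how the central element $\Ch_r(\Omega) = \qn{1}^{2r}E^rF^r - (-1)^\ell(K^r + K^{-r})$ acts on modules in $\cat_g$. Substituting the parametrization $g = M(\kappa,\ve,\vp)$ yields
\begin{equation*}
  g(\Ch_r(\Omega)) = -(-1)^\ell\bp{\kappa + \kappa^{-1} - \ve\vp\kappa^{-1}},
\end{equation*}
so on every module in $\cat_g$ the Casimir $\Omega$ is a root of the degree-$r$ polynomial $P(X):=\Ch_r(X) - g(\Ch_r(\Omega))$. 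Using $\Ch_r(2\cos\theta)=2\cos(r\theta)$ one finds $\Ch_r'(2\cos\theta)=r\sin(r\theta)/\sin\theta$, so the critical values of $\Ch_r$ over $\C$ are exactly $\pm 2$. Hence $P$ has $r$ distinct roots $\omega_1,\ldots,\omega_r\in\C$ if and only if $g(\Ch_r(\Omega))\neq\pm 2$, which is precisely the condition $g\notin\X$.

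Second, the PBW basis $\{F^aK^bE^c : 0 \le a,b,c < r\}$ of $A_g := \Uqg/I_g$, where $I_g$ is the two-sided ideal generated by $\{z - g(z) : z \in Z_0\}$, shows that $\dim_\C A_g = r^3$. For $g \notin \X$, I would exhibit $r$ pairwise non-isomorphic irreducible $r$-dimensional modules $V_1,\ldots,V_r \in \cat_g$, one per eigenvalue $\omega_i$ of $\Omega$. When $g$ is diagonal with $\kappa \neq \pm 1$, these are the usual highest-weight modules indexed by the $r$ solutions $\mu \in \C^*$ of $\mu^r = \kappa$; for general $g \notin \X$ they are cyclic (or semi-cyclic) modules on an explicit basis $v_0,\ldots,v_{r-1}$ on which $K, E, F$ act by tridiagonal matrices whose entries are tuned to reproduce the prescribed scalars $g(E^r), g(F^r), g(K^r)$ and Casimir eigenvalue $\omega_i$. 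Since $\Omega$ separates them, the $V_i$ are non-isomorphic, and $\sum_i(\dim V_i)^2 = r\cdot r^2 = r^3 = \dim A_g$, so Corollary~\ref{C:DensityThm} applies and $\cat_g$ is semi-simple.

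Finally, smallness of $\X$ is immediate since $\X$ is a proper algebraic subvariety of the irreducible three-dimensional variety $\Gr \cong \C^*\times\C^2$ (it is cut out by the single polynomial $(\kappa^2 + 1 - \ve\vp)^2 - 4\kappa^2$), hence has strictly smaller dimension; no finite union of left-translates, each also of strictly smaller dimension, can cover $\Gr$. For symmetry, the identities $S(E) = -EK^{-1}$, $S(F) = -KF$, $S(K) = K^{-1}$ give $S(\Omega) = \Omega$ by direct calculation, whence $g^{-1}(\Ch_r(\Omega)) = g(S(\Ch_r(\Omega))) = g(\Ch_r(\Omega))$, so $g \in \X \iff g^{-1} \in \X$. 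The main obstacle will be the explicit construction of the cyclic and semi-cyclic modules for non-diagonal $g$: one must exhibit tridiagonal matrices satisfying every defining relation of $\Uqg$ simultaneously and matching the three parameters $\kappa, \ve, \vp$ together with each prescribed Casimir eigenvalue $\omega_i$. This is the standard but technical classification of cyclic $\Uqg_\xi(\slt)$-modules and requires careful bookkeeping of the tridiagonal coefficients.
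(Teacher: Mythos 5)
Your proposal is correct in outline but takes a genuinely different route from the paper. You prove semi-simplicity of $\cat_g$ for \emph{every} $g\in\Gr\setminus\X$ directly, by exhibiting $r$ pairwise non-isomorphic $r$-dimensional irreducibles in $\cat_g$ (highest weight, semi-cyclic or cyclic according to the vanishing of $\varepsilon,\varphi$), separating them by the $r$ distinct roots of $\mathsf{C}_r(X)-g(\mathsf{C}_r(\Omega))$, and then invoking Corollary~\ref{C:DensityThm}; your computation of $g(\mathsf{C}_r(\Omega))$, the identification of $\pm2$ as the critical values of the Chebyshev polynomial, the dimension count $r\cdot r^2=\dim(\Uqg/I_g)$, and the smallness and symmetry arguments (the latter via $S(\Omega)=\Omega$ and $g^{-1}=g\circ S$) are all correct and in fact more explicit than the paper's ``direct calculation.'' The paper instead constructs the highest weight irreducibles only for \emph{regular diagonal} $g$ and then transports semi-simplicity to the rest of $\Gr\setminus\X$ using the quantum coadjoint action $\wt G$: an orbit containing a regular element gives an algebra isomorphism $\Uqg/I_g\to\Uqg/I_h$, and the Casimir, on which $\wt G$ acts trivially, is used to match such orbits with the complement of $\X$. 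What the coadjoint action buys is exactly the step you flag as your ``main obstacle'': one never has to write down the cyclic and semi-cyclic modules for non-diagonal $g$, verify the defining relations, check irreducibility, and show that each of the $r$ Casimir eigenvalues is realized by an $r$-dimensional irreducible in $\cat_g$. That classification is standard (it goes back to De Concini--Kac), so your route is viable, but as written this existence is asserted rather than proved; your argument is complete only modulo carrying out or citing that construction, whereas the paper's is complete modulo the coadjoint-action machinery it imports.
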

\begin{proof}
A direct calculations shows $\X$ is small and symmetric.  We consider the quantum coadjoint action $\wt G$ on $\Gr$, see \cite{DK} and above.  Let $B\subset G$ be the union of all $\wt G$-orbits which contain a regular element.  To prove the lemma we will show that the category $\cat_h$ is semi-simple for each $h\in B$  and that $\Gr\setminus \X\subset B$.     
To do this we follow an argument similar to the one in Subsection \ref{SS:SemiSimple}: 
 direct construction shows that  
for any regular $g$ the category $\cat_g$ has $\ro$ non-isomorphic highest weight modules of dimension $\ro$    
 (also see Corollary 3.2 of \cite{DK}).  Thus, the PBW Theorem and Corollary \ref{C:DensityThm} imply that for regular $g$, the category $\cat_g$ semi-simple. 

Now let $h= M\bp{\kappa_h,\ve,\vp}\in B$ then by definition of $B$ there exists regular $g=M\bp{\kappa_g,0,0}\in \Gr$ such that $g$ and $h$ are in the same $\wt G$-orbit.   Then the $\wt G$-action gives an outer automorphism of $\Uqg$ inducing an isomorphism of  algebras $ \Uqg/I_g\rightarrow \Uqg/I_h$ and so $\cat_h$ is semi-simple.   
Finally, the element  $\Omega_q=\qn1^2EF+Kq^{-1}+K^{-1}q$ is in the center of the $\Fq$-algebra $\Uqg_q$ and specializes to $\Omega$.   
    The quantum coadjoint action is constant on $\Omega=\Omega_\xi$ because as explained above this action is constant on the sub-algebra of $\Uqg$ corresponding to the center of $\Uqg_q$.  Therefore, if $V$ and $W$ are  modules of $\cat_g$ and $ \cat_h$, respectively then $\Ch_r\bp{\Omega}$ acts on both $V$ and $W$ by 
    $$-(-1)^\ell(\kappa_g+\kappa_g^{-1})=(-1)^\ell\ve\vp\kappa_h^{-1}-(-1)^\ell (\kappa_h+\kappa_h^{-1}).$$  
    Since $g$ is regular then $\kappa_g \neq \pm1$, so $-\ve\vp\kappa_h^{-1}+ \kappa_h+\kappa_h^{-1}\neq \pm2$.  Thus, $h\in \Gr\setminus \X$.  
    \end{proof}

\begin{corollary}\label{C:CatHRibbon}
The category $\cat_{\slt}^H$ is ribbon.  
\end{corollary}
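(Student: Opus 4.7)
The plan is to verify the four hypotheses of Theorem~\ref{T:GenTwist} for $\cat_{\slt}^H$, the remaining ingredient being the twist self-duality on generic simple objects, and then to conclude directly.

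First I would record what is already in hand from the preceding discussion in Section~\ref{SS:ccl}. The category $\cat_{\slt}^H$ is pivotal (via $\phi = K_{2\rho}^{1-r}$) and braided (cited from \cite{GP3} for odd $\ell$ and \cite{GPT1} for even $\ell$). It remains to establish generic $\D$-semi-simplicity and the twist self-duality on generic simple modules.

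For generic $\D$-semi-simplicity, I would imitate the argument in Subsection~\ref{SS:SemiSimple} but using the direct approach of Lemma~\ref{L:catsl2GenSS}. Write $\cat_{\slt}^H = \bigoplus_{d \in \D} \cat^H_d$, with $\D \simeq \C^*$ the group of diagonal characters of $Z_0$, and let $\X_\D \subset \D$ be the set of $d$ with $d(K^r) = \pm 1$ (clearly small and symmetric). For $d \notin \X_\D$, the category $\cat^H_d$ is a direct sum (over lifts in $\h^*/L_R$) of categories of modules for the corresponding finite dimensional quotient of $\Uqg^H$, and the same dimension-counting argument as in Lemma~\ref{L:catsl2GenSS} (highest weight modules of dimension $r$, together with Corollary~\ref{C:DensityThm}) yields semi-simplicity of each $\cat^H_d$ with $d \notin \X_\D$.

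Next I would verify $\theta_{V^*} = (\theta_V)^*$ for each generic simple object $V \in \cat_{\slt}^H$. A generic simple $V$ is a highest weight module with highest weight of the form $\lambda + (r-1)\rho$ for some $\lambda \in \h^*$, and its dual has highest weight $-\lambda + (r-1)\rho$. As in Subsection~\ref{SS:RibbonStr}, only the Cartan part $\Hh$ of the $R$-matrix contributes to the scalar by which $\theta_V$ acts on a highest weight vector, giving
\[
\theta_V = \xi^{\langle \lambda, \lambda \rangle - (r-1)^2 \langle \rho, \rho \rangle} \Id_V,
\]
and the identical computation for $V^*$ yields the same scalar, so $\theta_{V^*} = (\theta_V)^*$. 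The main step is confirming that the same formula for the $R$-matrix and the same computation go through for even $\ell$; here I would appeal to the braiding constructed in \cite{GPT1}, whose Cartan part is the same $\Hh$, so the calculation is unchanged.

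With these ingredients assembled, $\cat_{\slt}^H$ is a generically $\D$-semi-simple, pivotal, braided $\C$-category satisfying $\theta_{V^*} = (\theta_V)^*$ on every generic simple object. Theorem~\ref{T:GenTwist} then applies and shows that $\theta$ is a twist on the whole of $\cat_{\slt}^H$, so $\cat_{\slt}^H$ is ribbon. The only real subtlety is checking that no step in the odd-$\ell$ argument from Subsections~\ref{SS:SemiSimple}--\ref{SS:RibbonStr} secretly used the quantum coadjoint action of \cite{DKP}; for $\slt$ the casimir-based approach of Lemma~\ref{L:catsl2GenSS} bypasses that issue.
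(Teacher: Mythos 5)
Your proposal follows essentially the same route as the paper's proof: establish generic $\D$-semi-simplicity of $\cat_{\slt}^H$ via the $r$ non-isomorphic simple highest weight modules of dimension $r$ in each regular graded piece (using the forgetful functor to $\cat_{\slt}$, Lemma~\ref{L:catsl2GenSS} and Corollary~\ref{C:DensityThm}), combine this with the pivotal and braided structures already recorded, verify $\theta_{V^*}=(\theta_V)^*$ on generic simple objects, and conclude by Theorem~\ref{T:GenTwist}. The only cosmetic difference is that you write out the highest-weight twist computation explicitly, whereas the paper delegates it to \cite{GP3} and \cite{GPT1}; your computation matches the one in Subsection~\ref{SS:RibbonStr}, so the argument is the same.
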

\begin{proof}
Let $\X_H=\Z/2\Z \subset \C/2\Z$ and let $d\in \C/2\Z\setminus \X_H$.  The proof of Lemma \ref{L:catsl2GenSS} and the forgetful functor $\cat^H_{\slt}\to \cat_{\slt}$ imply $\cat_d$ has $\ro$ non-isomorphic highest weight modules of dimension $\ro$ 
(one can also see this by direct construction).    
 Then the PBW Theorem and Corollary \ref{C:DensityThm} imply that $\cat_d$ is semi-simple.  
Combining this with the results at the beginning of this subsection we have $\cat^H_{\slt}$ is generically $\Gr$-semi-simple pivotal braided category with singular locus $\X_H$.  
It is known that for a generic simple object $V$ that   $\theta(V)^*=\theta(V^*)$, see \cite{GP3} for the odd case and \cite{GPT1} for the even.  The corollary follows from Theorem~\ref{T:GenTwist}.
\end{proof}
Note when $\ell$ is ever Corollary \ref{C:CatHRibbon} was previously known, see \cite{Mur08,O, GPT1}.   When $\ell$ is odd it was known that $\cat^H_{\slt}$ contained a subcategory which is ribbon, see \cite{GP3}.  

Next we prove that $\cat_{\slt}$ has a trace and show the modified dimensions only depend on the action of the casimir element.  
First, the projective cover of the trivial module is self dual, see \cite{FGST1, FGST2} for the even case and \cite{GKP2} for the odd.  
Therefore, Corollary 3.2.1 of \cite{GKP2} implies the ideal $\Proj$ of projective $\Uqg_\xi$-modules admits a unique (up to global scalar) nontrivial right trace which we denote by $\{\qt_V \}_{V\in \Proj}$. 
We will show this right trace is a trace.  
As above let $U_h=U_h(\slt)$ be the h-adic quantum group generated by $E,F$ and $H$.   
Let $V_0^h$ be the highest weight module of $U_h$ with a highest weight vector $v_0$ such that $Hv_0=(r-1)v_0$. 
 Let  $V_0^q$ be the highest weight $\Uqg_q$-module with a highest weight vector $v$ such that $Kv_0=q^{r-1}v_0$.  Let $V_0$ be the highest weight $\Uqg$-module which is the specialization of $V_0^q$.  
Let $W_0$ be the set of weights of $V_0^h$.  Then  $W_0\subset L_R$ if $\ell$ is odd and $W_0\subset \frac12L_R$ if $\ell$ is even.  
Let $f_{U_h}$ and $f_{\Uqg_q}$ be as in Subsection \ref{SS:modifiedDim}.   As above in the odd case  
$q$ can be 
specialized at $\xi$  to obtain an element $f_{\Uqg_\xi}\in \Aut_\C(V_0)\otimes \Uqg$ and 
any for representation $\rho_V: \Uqg\to \End(V)$,
$f_V=(\Id\otimes\rho_V)(f_{\Uqg_\xi})$  is an automorphism of the
$\Uqg$-module $V_0\otimes V$.    

\begin{lemma}\label{L:S'g0}
For any generic simple $\Uqg_\xi(\slt)$-module $V$ we have $\ptr_R(f_V)=r\Id_{V_0}$.
\end{lemma}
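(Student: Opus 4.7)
The plan is to follow the proof of Lemma~\ref{L:PtrRfV} verbatim, specialized to $\slt$, where $N=1$ and the generic simple modules have dimension $r$. For $\slt$ there is a single positive root $\alpha$ with $K_\alpha = K$, and the truncated quasi-R-matrix reduces to the single factor $\check R^< = \exp_{\xi^{-2}}^{<}\bigl((\xi - \xi^{-1}) E \otimes F\bigr)$.

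Fix a highest weight vector $v_0 \in V_0$, so $E v_0 = 0$. Then only the $k=0$ term of $\check R^<$ contributes to $\check R^<(v_0 \otimes x)$, producing $v_0 \otimes x$ up to terms in $V_0' \otimes V$, where $V_0'$ denotes the sum of strictly lower weight spaces of $V_0$. Combining with $\Hh$ via \eqref{E:HhFormula}, only the projector $p^{(r-1)\rho}$ on $V_0$ survives after applying $\ptr_R$, so
$$\ptr_R(f_V)(v_0) = \ptr_R\bigl(p^{(r-1)\rho} \otimes \rho_V(K_{2(r-1)\rho})\bigr)(v_0).$$
Unpacking $\ptr_R$ using $\tev = \ev \circ \tau \circ (\phi \otimes 1)$, this becomes $v_0 \cdot \tr_V\bigl(\phi \, K_{2(r-1)\rho}\bigr)$. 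Since $\phi = K_{2\rho}^{1-r}$ and $K_{2(r-1)\rho} = K_{2\rho}^{r-1}$ (with $2\rho = \alpha$), the two group-like factors cancel to $1$, leaving $\dim(V)\,v_0$.

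By the proof of Lemma~\ref{L:catsl2GenSS}, every generic simple $\Uqg_\xi(\slt)$-module has dimension $r$ (realized as a highest-weight module at a regular parameter, transported by the quantum coadjoint action), so $\ptr_R(f_V)(v_0) = r\,v_0$. Since $V_0$ is simple, this forces $\ptr_R(f_V) = r\,\Id_{V_0}$. There is no substantive obstacle: the crucial ingredient is the cancellation $\phi K_{2(r-1)\rho} = 1$ built into the pivotal normalization $\phi = K_{2\rho}^{1-r}$ from Subsection~\ref{SS:pivotalSt}, which is precisely why Remark~\ref{R:Pivotalhadic} favors this choice over $K_{2\rho}$. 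The only parity-sensitive point to verify is that the specialization $\check R^h \to \check R^< \to f_{\Uqg_\xi}$ described in Subsection~\ref{SS:modifiedDim} goes through when $\ell$ is even, but this is automatic since $\check R^<$ already lives in $\Uqg_q \otimes \Uqg_q$ and admits an honest specialization at $q=\xi$ regardless of parity.
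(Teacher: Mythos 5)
Your proof is correct and follows essentially the same route as the paper, whose proof of this lemma consists of the single sentence that the proof of Lemma~\ref{L:PtrRfV} applies for general $\ell$. You have simply unpacked that argument for $\slt$ — the cancellation $\phi\,K_{2(r-1)\rho}=1$, the fact that generic simple modules have dimension $r$, and the parity-independence of the specialization — all of which check out.
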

\begin{proof}
The proof of Lemma \ref{L:PtrRfV} applies here for general $\ell$.  
\end{proof}

\begin{lemma}\label{L:S'0g}
  There exists a polynomial $P(X)\in \C[X]$ such that for any generic simple $\Uqg_\xi(\slt)$-module $V$ we have $\ptr_L(f_V)=P(\omega) \Id_V$ where
  $\omega=\ang{\rho_{V}(\Omega)}$. 
\end{lemma}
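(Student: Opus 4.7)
The proof plan is to follow the structure of Lemma~\ref{L:computingLeftptrf} but to replace its use of the quantum coadjoint action (which at the moment is only available at odd $\ell$) by a direct analysis of the center of $\Uqg_q(\slt)$, which is unusually simple and insensitive to the order of $\xi$.

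First I would show that $\delta_q := \ptr_L(f_{\Uqg_q})$ lies in the center of $\Uqg_q$. The argument is identical to the first part of the proof of Lemma~\ref{L:computingLeftptrf}: passing to the $h$-adic version $U_h$, the element $R_{21}R_{12}$ defines, for each $U_h$-module $W$, an endomorphism of $V_0^h \otimes W$ that is natural in $W$; taking $\ptr_L$ over $V_0^h$ therefore produces a natural endomorphism of the identity functor on $U_h$-modules, hence a central element $\delta_h \in U_h$. The truncation argument from \eqref{eq:Rmatrix} shows that $\delta_h$ in fact lives in $\Uqg_q \subset U_h[h^{-1}]$, giving the desired central $\delta_q$.

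Next I would invoke the classical fact that the center of $\Uqg_q(\slt)$ over the ring $\Fq$ is the polynomial ring $\Fq[\Omega_q]$. This is the standard PBW-type computation and uses only that $q-q^{-1}$ is invertible in $\Fq$, which is true for every $\ell \geq 2$ (since $\Fq$ is the localization of $\C[q]$ at $q=\xi$ and $\xi^2 \neq 1$). It follows that $\delta_q = P_q(\Omega_q)$ for some polynomial $P_q \in \Fq[X]$; the fact that the coefficients are really in $\Fq$ and not merely in $\C(q)$ follows from the $\Fq$-linear independence of the powers $\Omega_q^k$ inside $\Uqg_q$ (visible from the leading $(q-q^{-1})^{2k}E^kF^k$ PBW terms). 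Specializing at $q=\xi$ we obtain $P := P_\xi \in \C[X]$ with $\delta_\xi = P(\Omega)$ in $\Uqg_\xi$. On any generic simple $\Uqg_\xi(\slt)$-module $V$, Schur's lemma gives $\rho_V(\Omega) = \omega\,\Id_V$, hence $\ptr_L(f_V) = \rho_V(\delta_\xi) = P(\omega)\,\Id_V$, as required.

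The main obstacle is the second step. The subtlety is that the center of the \emph{specialized} algebra $\Uqg_\xi$ is much larger than the image of $\C[\Omega]$ under specialization; indeed, it contains the whole De~Concini--Kac subalgebra $Z_0$. So the reduction to $\Fq[\Omega_q]$ must be performed generically in $q$, and one must then check that $\delta_q$ remains regular at $q=\xi$, which is exactly the linear-independence verification above.
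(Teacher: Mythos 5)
Your proof is correct and rests on the same two pillars as the paper's: $\ptr_L(f)$ is left multiplication by a central element, and the relevant center is a polynomial ring in the Casimir. The difference is where you identify that polynomial. The paper works at the $h$-adic level, writing $\delta_h=P(\Omega_h)$ from the fact that the center of $U_h(\slt)$ is generated by $\Omega_h$, and must then reconcile the two partial traces: the pivotal elements of $U_h(\slt)$-mod and $\Uqg_q(\slt)$-mod differ by $K^{-r}$ (Remark \ref{R:Pivotalhadic}), which is the source of the factor $(-1)^{r-1}$ relating $\delta_h$ to $\delta_q$ in the paper's proof. You instead stay inside $\Uqg_q$, invoke that its center over $\Fq$ is $\Fq[\Omega_q]$, and control denominators by a downward induction on the leading PBW term $\qn1^{2k}F^kE^k$ of $\Omega_q^k$ before specializing at $q=\xi$. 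This buys you two things: you never need the pivotal comparison (harmless here, since the lemma only asserts existence of \emph{some} $P$ and a scalar is absorbed into it), and you make explicit the integrality-of-coefficients step that the paper glosses over when it passes from the $h$-adic center directly to $P(X)\in\C[X]$. Two small cautions. First, your parenthetical claim that $\xi^2\neq1$ for every $\ell\geq2$ fails at $\ell=2$, where $\xi=-1$ and $\qn1$ specializes to $0$; that case is degenerate throughout ($r=1$, $V_0$ trivial) and not really in play, but the inductive argument as stated needs $\ell\geq3$. Second, when you say the truncation shows that $\delta_h$ ``lives in $\Uqg_q$, giving the desired central $\delta_q$,'' keep in mind that $\delta_h$ and $\delta_q$ are partial traces taken with respect to different pivotal structures and hence agree only up to the sign $(-1)^{r-1}$; this affects neither centrality nor the existence of $P$, but it is why the paper's final expression carries that sign.
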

\begin{proof}
As in the proof of Lemma \ref{L:computingLeftptrf} the morphism $\ptr_L(f_{U_h})$ is given by left multiplication of a central element $\delta_h$ of $U_h(\slt)$.  The center of $U_h(\slt)$ is generated by the casimir element $\Omega_h$.  So there exists a polynomial $P(X)\in \C[X]$ such that $\delta_h=P(\Omega_h)$.
Here the partial closure is taken in $U_h(\slt)$.  On the other hand, the partial trace of the central element   $\delta_q=\ptr_L(f_{\Uqg_q})$ is taken in  $\Uqg_q(\slt)$-mod.    Recall that the pivotal structure of $U_h(\slt)$-mod and $\Uqg_q(\slt)$-mod differ by $K^{-r}$, see Remark \ref{R:Pivotalhadic}.  In particular, taking the left closer with respect to $V_0^h$ and $V_0^q$ differ by the constant $(-1)^{r-1} $ determined by $K^{-r}v_0=q^{-r(r-1)}v_0=(-1)^{r-1}v_0$.  
Therefore, $\delta_h$ viewed as an element of $\Uqg_q$ (i.e. seen as
the sub algebra of $U_h[h^{-1}]$) is equal to the element
$(-1)^{r-1}\delta_q$.  Now the morphism $\ptr_L(f_V)$ is given by left
multiplication by $\delta_\xi$ which is specialization a of $\delta_q$.
Thus, $\ptr_L(f_V)=(-1)^{r-1}P(\omega)\Id_{V}$ where
$\omega=\ang{\rho_V(\Omega)}$.
\end{proof}

\begin{corollary} \label{C:GenSimpleFormulaD} Let $V$ be a generic
  simple $\Uqg_\xi(\slt)$-module.  Then there exists
  $\alpha \in(\C\setminus \Z) \cup r\Z$ such that
  $\ang{\rho_{V}(\Omega)}=(-1)^r\bp{q^\alpha+q^{-\alpha}}$ and
 \begin{equation}\label{E:modNilCyc}
 \qd(V)=(-1)^{r-1}\prod_{j=1}^{r-1}
\frac{\qn{j}}{\qn{\alpha+r-j}}=(-1)^{r-1}\frac{r\,\qn{\alpha}}{\qn{r\alpha}}
=\frac{(-1)^{r-1}r}{q^{(1-r)\alpha}  +\cdots+q^{(r-3)\alpha}+q^{(r-1)\alpha}}.
\end{equation}
 Moreover, $ \qd(V)= \qd(V^*)$.   
 Here we have fixed the global scalar of the right trace by defining $\qd(V_0)=\qt_{V_0}(\Id_{V_0})=(-1)^{r-1}$.
\end{corollary}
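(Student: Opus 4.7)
The plan is to combine Equation~\eqref{E:Compqd} with Lemmas~\ref{L:S'g0} and~\ref{L:S'0g}, reduce the computation of the scalar $P(\omega)$ to a nilpotent representative via the quantum coadjoint action, and then rewrite everything in terms of the parameter~$\alpha$. Substituting $\ang{\ptr_R(f_V)}=r$ from Lemma~\ref{L:S'g0} and $\ptr_L(f_V)=P(\omega)\Id_V$ from Lemma~\ref{L:S'0g} into~\eqref{E:Compqd}, together with the normalization $\qd(V_0)=(-1)^{r-1}$, will give
$$\qd(V)=\frac{(-1)^{r-1}r}{P(\omega)}.$$
Because $\Omega$ lies in the $\wt G$-invariant subalgebra $Z_\xi$ of $\Uqg_\xi$, the scalar $P(\omega)$ is constant on $\wt G$-orbits, so I may replace $V$ by a nilpotent deformation of highest weight $\mu+(r-1)\rho$. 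Computing $\ptr_L(f_{U_h})$ on such a nilpotent module at the $U_h$-level and specializing at $q=\xi$ (the computation of Proposition~45 of~\cite{GP3} is insensitive to the parity of~$\ell$) then yields $P(\omega)=\qn{r\mu_0}/\qn{\mu_0}$ with $\mu_0=\ang{\mu,\alpha_1}$, and hence $\qd(V)=(-1)^{r-1}r\qn{\mu_0}/\qn{r\mu_0}$.

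Next I will produce $\alpha$ and convert this into the closed form of~\eqref{E:modNilCyc}. Acting by $\Omega=\qn1^2EF+K\xi^{-1}+K^{-1}\xi$ on a highest weight vector of $K$-weight $\mu_0+r-1$ gives $\omega=\xi^{\mu_0+r}+\xi^{-\mu_0-r}$, and the equation $(-1)^r(q^\alpha+q^{-\alpha})=\omega$ is a quadratic in $q^\alpha$ over~$\C$, so a solution $\alpha$ always exists. Using $\xi^r=(-1)^{\ell-1}$ and $q^{\ell/2}=-1$, one checks that the solutions are obtained from $\mu_0$ by a translation in $\ell\Z$ possibly shifted by $\ell/2$ (chosen to absorb the sign $(-1)^r$); the $\ell$-periodicity and oddness of $\qn\cdot$ then force $\qn\alpha/\qn{r\alpha}=\qn{\mu_0}/\qn{r\mu_0}$, which is the middle form of~\eqref{E:modNilCyc}. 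For the constraint $\alpha\in(\C\setminus\Z)\cup r\Z$, genericity of $V$ translates via Lemma~\ref{L:catsl2GenSS} into $\Ch_r(\omega)\neq\pm2$; applying $\Ch_r((-1)^r(q^\alpha+q^{-\alpha}))=(-1)^r(q^{r\alpha}+q^{-r\alpha})$, this is equivalent to $q^{r\alpha}\neq\pm1$, which excludes precisely the integer values of $\alpha$ not lying in $r\Z$.

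The three forms of~\eqref{E:modNilCyc} are equivalent by elementary $q$-arithmetic. The rightmost equality is the standard expansion $\qn{r\alpha}/\qn\alpha=\sum_{k=0}^{r-1}q^{(2k-r+1)\alpha}$, while the leftmost equality
$$\prod_{j=1}^{r-1}\frac{\qn{j}}{\qn{\alpha+r-j}}=\frac{r\qn\alpha}{\qn{r\alpha}}$$
follows because both sides are $\ell$-periodic rational functions of $q^\alpha$ of the same degree with matching zeros and poles, pinned down by the normalization $\prod_{j=1}^{r-1}\qn{j}=(-1)^{r-1}r$. Finally, $\qd(V)=\qd(V^*)$ holds because the nilpotent deformation of $V^*$ has highest weight $-\mu+(r-1)\rho$ (the negative of the lowest weight of the nilpotent deformation of~$V$), so $\mu_0^*=-\mu_0$, and the middle form of~\eqref{E:modNilCyc} is invariant under $\mu_0\mapsto-\mu_0$ because $\qn{-x}=-\qn x$. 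Equivalently, $\xi^{2r}=1$ forces $\omega_V=\omega_{V^*}$, and any admissible~$\alpha$ for~$V$ gives $-\alpha$ for~$V^*$.

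The hardest step will be the parity bookkeeping connecting $\mu_0$ and $\alpha$ uniformly across the odd and even $\ell$ cases, together with the verification of the product identity linking the three forms of~\eqref{E:modNilCyc}; modulo this, the corollary reduces to a direct application of the previously established lemmas.
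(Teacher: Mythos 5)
Your proposal is correct and follows essentially the same route as the paper: substitute Lemmas \ref{L:S'g0} and \ref{L:S'0g} into Equation \eqref{E:Compqd}, observe that $\qd(V)$ depends only on $\ang{\rho_V(\Omega)}$, reduce to a nilpotent representative in the quantum coadjoint orbit via Lemma \ref{L:catsl2GenSS}, and conclude $\qd(V)=\qd(V^*)$ from the invariance of $\omega$ under duality. The only difference is that you work out the $q$-arithmetic and parity bookkeeping relating $\mu_0$ and $\alpha$ explicitly, whereas the paper delegates the nilpotent-module formula to \cite{CGP3}.
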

\begin{proof}
From Equation \eqref{E:Compqd} we have $\qd(V)=\frac{\qd(V_0)\ang{\ptr_R(f)}}{\ang{\ptr_L(f)}}$.  So by Lemmas \ref{L:S'g0} and \ref{L:S'0g} we have $\qd(V)=\qd(W)$ for any generic simple $W$ such that $\ang{\rho_W(\Omega)}=\ang{\rho_V(\Omega)}$.   The proof of Lemma \ref{L:catsl2GenSS} implies there exists a nilpotent simple module $W$ in the quantum co-adjoint orbit of $V$.  Moreover, this proof implies that the action of $\Omega$ is the same on $V$ and $W$.  Since $W$ is nilpotent there exists $\alpha\in (\C\setminus \Z) \cup r\Z$ such that $
{\Omega}$ acts on $W$ by $(-1)^r\bp{q^\alpha+q^{-\alpha}}$.   
From \cite{CGP3} it is shown that the modified quantum dimension of $W$ is given by the formula in Equation \eqref{E:modNilCyc}.  Thus, Equation \eqref{E:modNilCyc} follows. 

To see the last statement of the corollary, notice $S(\Omega)=\Omega$ so $\ang{\rho_V(\Omega)}=\ang{\rho_{V^*}(\Omega)}$ and $\qd(V)= \qd(V^*)$. 
\end{proof}

Theorem \ref{T:GenericDimTrace}, Lemma \ref{L:catsl2GenSS} and Corollary \ref{C:GenSimpleFormulaD} imply the following corollary.  
\begin{corollary}
The right trace $\{\qt_V \}_{V\in \Proj}$ on $\Proj$ in $\cat_{\slt}$ is a trace. 
\end{corollary}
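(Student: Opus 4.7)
The plan is simply to invoke Theorem~\ref{T:GenericDimTrace} after assembling all the hypotheses that have been established in this section. I first record the existence of the right trace $\{\qt_V\}_{V\in\Proj}$ on the ideal of projectives of $\cat_{\slt}$, which follows from the self-duality of the projective cover of the trivial module together with Corollary 3.2.1 of \cite{GKP2}; this is already done in the paragraph preceding Lemma~\ref{L:S'g0} and fixes $\qt$ up to a global scalar.

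Next, I verify the structural assumptions on $\cat_{\slt}$ required by Theorem~\ref{T:GenericDimTrace}. The category $\cat_{\slt}$ is a pivotal $\C$-category (as noted at the beginning of Section~\ref{SS:ccl} using the pivotal element $\phi=K_{2\rho}^{1-r}$), and it is a category of finite-dimensional modules over the Hopf algebra $\Uqg_\xi(\slt)$, so it is Krull--Schmidt and tensor product by any nonzero object is faithful (this is the ``above assumption'' in Theorem~\ref{T:GenericDimTrace}). Lemma~\ref{L:catsl2GenSS} supplies the remaining structural input by showing that $\cat_{\slt}$ is generically $\Gr$-semi-simple with the explicit singular locus $\X$.

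The one nontrivial hypothesis is the equality $\qd(V)=\qd(V^*)$ on generic simple objects, and this is precisely the content of the last sentence of Corollary~\ref{C:GenSimpleFormulaD}, where it is obtained from $S(\Omega)=\Omega$ via the formula $\qd(V)=\qd(V_0)\ang{\ptr_R(f_V)}/\ang{\ptr_L(f_V)}$ combined with Lemmas~\ref{L:S'g0} and \ref{L:S'0g}: both partial traces of $f_V$ depend on $V$ only through the scalar $\ang{\rho_V(\Omega)}$, which is invariant under dualization.

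With all hypotheses of Theorem~\ref{T:GenericDimTrace} in place, that theorem immediately yields that the right trace $\{\qt_V\}_{V\in\Proj}$ coincides with its dual $\qt^*$, i.e.\ it is a two-sided trace on $\Proj$. I expect no genuine obstacle here: the work was really done in Lemma~\ref{L:catsl2GenSS} (semi-simplicity of the generic strata, which required the \slt-specific computation with the Casimir to bypass the quantum co-adjoint action results of \cite{DKP}) and in Corollary~\ref{C:GenSimpleFormulaD} (the self-duality of $\qd$), so the corollary itself is a one-line application of Theorem~\ref{T:GenericDimTrace}.
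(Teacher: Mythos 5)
Your proposal is correct and follows exactly the paper's own route: the paper derives this corollary in one line by combining Theorem~\ref{T:GenericDimTrace}, Lemma~\ref{L:catsl2GenSS} (generic $\Gr$-semi-simplicity of $\cat_{\slt}$), and Corollary~\ref{C:GenSimpleFormulaD} (the equality $\qd(V)=\qd(V^*)$ for generic simples), with the existence and uniqueness of the right trace already established via Corollary 3.2.1 of \cite{GKP2}. Your additional verification of the Krull--Schmidt and faithfulness hypotheses is a welcome bit of explicitness that the paper leaves implicit.
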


\begin{corollary}
There exists a unique (up to global scalar) trace on the projective modules of $\wb\cat_{\slt}$.
\end{corollary}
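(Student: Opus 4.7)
The plan is to mirror the proof of the analogous corollary at the end of Section~\ref{S:RepQGodd}. First, I would use the Hopf algebra projection $\Uqg\to\wb\Uqg$ (whose kernel is the two-sided ideal generated by $E^r$, $F^r$, and $K^{2r}-1$) to pull back each finite dimensional $\wb\Uqg$-weight module to a $\Uqg$-weight module. This identifies $\wb\cat_{\slt}$ with the full subcategory $\bigoplus_{g\in\Gr_0}\cat_g$ of $\cat_{\slt}$, where $\Gr_0=\{g\in\Gr: g(E^r)=g(F^r)=0,\; g(K^r)=\pm1\}$; the identification preserves the pivotal structure and is stable under tensor products and retracts.

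Next, I would argue that a projective object of $\wb\cat_{\slt}$ remains projective when regarded as an object of $\cat_{\slt}$. Because $\cat_{\slt}$ is $\Gr$-graded, projectivity can be tested component-wise in each $\cat_g$, so it suffices to note that projectivity inside a single graded piece $\cat_g$ with $g\in\Gr_0$ is insensitive to whether one works in $\wb\cat_{\slt}$ or in $\cat_{\slt}$. Restricting the two-sided trace of the previous corollary to this subcategory then yields a trace on the projective ideal of $\wb\cat_{\slt}$.

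For uniqueness, I would invoke Corollary~3.2.1 of \cite{GKP2}, which reduces the uniqueness (up to global scalar) of the right trace on the projective ideal of $\wb\Uqg$-modules to the self-duality of the projective cover of the trivial $\wb\Uqg$-module. This self-duality is established in \cite{FGST1,FGST2} for even $\ell$ and in \cite{GKP2} for odd $\ell$, so it holds in every case treated in this section. Since any right trace is unique up to scalar, the trace inherited from $\cat_{\slt}$ is automatically the unique such right trace, and is therefore a two-sided trace.

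The main obstacle I anticipate is a clean verification that projectivity transfers correctly between the two categories. The argument relies essentially on the compatibility of the $\Gr$-grading with the direct-sum decomposition $\wb\cat_{\slt}\cong\bigoplus_{g\in\Gr_0}\cat_g$: one must check that the projective objects of $\wb\cat_{\slt}$ are exactly the finite direct sums of the projectives of the individual $\cat_g$ for $g\in\Gr_0$, and that these coincide with their counterparts in $\cat_{\slt}$. Once this is in place, everything else is a direct specialization of the odd-case corollary.
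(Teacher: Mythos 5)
Your proposal is correct and follows essentially the same route the paper intends: the paper leaves this corollary without an explicit proof precisely because it is the verbatim analogue of the corollary for $\wb\cat_{odd}$, whose proof you mirror (identify $\wb\cat_{\slt}$ with $\bigoplus_{g\in\Gr_0}\cat_g$ via the projection $\Uqg\to\wb\Uqg$, restrict the two-sided trace on $\Proj\subset\cat_{\slt}$, and get uniqueness from \cite{GKP2} together with the self-duality of the projective cover of the trivial module from \cite{FGST1,FGST2} in the even case). Your extra care about the transfer of projectivity is sound and is settled by noting that $\Gr_0$ is finite here and $\wb\Uqg\cong\prod_{g\in\Gr_0}\Uqg/I_g$, so the blocks and their projectives coincide in the two categories.
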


\section{Conjectures for quantum groups at even ordered roots of unity}\label{S:ConjEven}
Let $\g$ be a simple finite-dimensional complex Lie algebra with data $\h, \roots^{+}, L_R, L_W,...$ discussed in Subsection \ref{SS:Uqg(H)}.   Let $\ell$ be an even integer such that $\ell\geq 2$ (and $\ell\notin 3\Z$ if $\g=G_2$).  Let $\xi=\e^{2\sqrt{-1}\pi/\ell}$ and $r=\frac{\ell}{2}$.  As above, for each  lattice $L$, consider the three versions of the quantum groups:  $\Uqg_\xi^L(\g), \UqgH_\xi(\g) $ and $ \wb\Uqg_\xi^L(\g)$.   

The case when $\ell$ is even the situation is not very well developed at this time and needs additional work to apply the results of this paper.  For example, 
the subalgebra $Z_0$ of $\Uqg_\xi^L(\g)$  generated by 
$\{X_{\pm\beta}^\ro, K_\gamma^\ro:\,\beta\in\Delta_+,\,\gamma\in L\}$ is not necessarily commutative:  if $\ang{\beta,\alpha_i}=1$ then Equation \eqref{eq:rel1} implies
$$
K_{\beta}^rX_j=q^{r\ang{\beta,\alpha_i}}X_jK_{\beta}^{r}=-X_jK_{\beta}^{r}.
$$
So one must first define an appropriate notion of weight modules for the algebras  $\Uqg_\xi^L(\g), \UqgH_\xi(\g) $ and $ \wb\Uqg_\xi^L(\g)$. 
Also, as mentioned above the quantum co-adjoint action is not worked out for case when $\ell$ is even.  Some work in this direction has been done for $\ell\in4\Z$, see \cite{Be}.

We are lead to the following question, ``For even ordered $\xi$, can one define categories of weight modules $\cat_{even}$, $\cat^H_{even}$ and $\wb\cat_{even}$ over $\Uqg_\xi^L(\g), \UqgH_\xi(\g) $ and $ \wb\Uqg_\xi^L(\g)$, respectively such that the following conjectures are true?'' 

\begin{conjecture}\label{Conj:catHRibbon}
For even ordered $\xi$, there exists a category $\cat^H_{even}$ of modules over $\UqgH_\xi(\g)$ which is ribbon.  
 \end{conjecture}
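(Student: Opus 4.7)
The plan is to exhibit a category $\cat^H_{even}$ of weight modules over $\UqgH_\xi(\g)$ for which the hypotheses of Theorem~\ref{T:GenTwist} are satisfied: generically $\D$-semi-simple, pivotal, braided, and $\theta_{V^*}=(\theta_V)^*$ on generic simple objects. I would define $\cat^H_{even}$ as the category of finite dimensional $\UqgH_\xi(\g)$-modules that are semisimple over $\C\ang{H_1,\ldots,H_\rk}$ and on which $\xi^{\sum_i d_ic_iH_i}=K_\gamma$ for each $\gamma=\sum_ic_i\alpha_i\in L$. As in the odd case this definition should be independent of the lattice $L$, and the pivotal structure with pivot $\phi=K_{2\rho}^{1-r}$ transfers directly from Subsection~\ref{SS:pivotalSt}.

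For the braiding, the truncated $R$-matrix $R^<=\Hh\check R^<$ built from Equation~\eqref{eq:Rmatrix} is a purely algebraic construction that does not use the parity of $\ell$; verifying that it still produces a braiding at even roots of unity should follow the template of Section~\ref{SS:ccl}, where the $\slt$ case was handled uniformly in $\ell$. A grading by the group of diagonal characters, playing the role of $\D$, is available at even roots as well.

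The decisive step is generic $\D$-semi-simplicity. I would mimic Subsection~\ref{SS:SemiSimple} by defining a singular locus $\X_\D$ cut out by irreducibility obstructions for highest weight modules, constructing for each $g\in\D\setminus\X_\D$ a family of $\ro^\rk$ pairwise non-isomorphic simple highest weight modules of dimension $\ro^N$, and invoking Corollary~\ref{C:DensityThm} after matching the total dimension $\sum_i\dim(V_i)^2$ with a PBW count on the appropriate finite dimensional quotient. For a generic simple $V$ with highest weight $\wta$, the twist is computed purely from $\Hh$ acting on a highest weight vector as a scalar of the form $\xi^{\ang{\wta,\wta}+c}\Id_V$; since $V^*$ has highest weight $-\wta$ up to the same constant shift and $\ang{-\wta,-\wta}=\ang{\wta,\wta}$, the scalars for $\theta_{V^*}$ and $(\theta_V)^*$ coincide, exactly as in Subsection~\ref{SS:RibbonStr}, and Theorem~\ref{T:GenTwist} concludes.

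The main obstacle is generic semi-simplicity: at even $\ell$ the quantum coadjoint arguments of De Concini--Kac--Procesi used in Subsection~\ref{SS:SemiSimple} to spread semi-simplicity from regular diagonal characters to a Zariski-dense open subset are not available. A substitute must be developed --- either an analogue of the coadjoint action for $\ell\in 4\Z$ along the lines of \cite{Be}, or a direct representation-theoretic analysis of $H$-weight modules at even $\ell$ --- together with a careful treatment of the parity subtleties arising from $K_\beta^\ro$ taking values in $\{\pm 1\}$ rather than $\{1\}$, as already encountered in the $\slt$ computation of Section~\ref{SS:ccl}.
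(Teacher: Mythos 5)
This statement is posed in the paper as a \emph{conjecture}, not a theorem: the paper offers no proof, and Section~\ref{S:ConjEven} explains precisely why one is not currently available. Your proposal is therefore not a proof but a research program, and you say as much yourself (``a substitute must be developed''). The reduction you describe --- define an appropriate category of $H$-weight modules, transfer the pivotal structure with pivot $\phi=K_{2\rho}^{1-r}$, establish a braiding from the truncated $R$-matrix, verify $\theta_{V^*}=(\theta_V)^*$ on generic simples via the $\Hh$-scalar $q^{\ang{\wta,\wta}}$, and invoke Theorem~\ref{T:GenTwist} --- is exactly the strategy the authors have in mind and carry out in the odd case in Subsections~\ref{SS:SemiSimple} and~\ref{SS:RibbonStr}. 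But the decisive step, generic $\D$-semi-simplicity, is left unproved, and that is the entire content of the conjecture.

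Concretely, the missing ingredients are: (1) the dimension count $\dim V_i=\ro^N$ for the simple highest weight modules at a regular character, which in the odd case comes from Corollary~3.2 of \cite{DK} and is what makes the density-theorem argument (Corollary~\ref{C:DensityThm}) close up --- no such result is established at even order for general $\g$; (2) the quantum coadjoint action of \cite{DKP}, used to spread semi-simplicity from regular diagonal characters to a Zariski-dense set, which is only developed for odd order (with partial results for $\ell\in4\Z$ in \cite{Be}); and (3) even the well-definedness of the weight-module category is delicate at even order, since $Z_0$ need not be commutative (the paper exhibits $K_\beta^rX_j=-X_jK_\beta^r$). The only place the paper actually proves the even-order statement is for $\g=\slt$ (Corollary~\ref{C:CatHRibbon}), where the coadjoint argument is replaced by a direct computation with the Casimir element --- a device that does not obviously generalize. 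So your proposal correctly identifies the architecture and the obstruction, but it does not prove the statement; the gap it defers is the same gap that makes this a conjecture.
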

 \begin{conjecture}
For even ordered $\xi$, there exists a category $\cat_{even}$ of weight modules over $\Uqg_\xi^L(\g)$ such that there exists a unique (up to global scalar) two sided trace on the ideal of projective modules of $\cat_{even}$.
 \end{conjecture}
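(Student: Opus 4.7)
The plan is to adapt the strategy of Section~\ref{S:RepQGodd} to the even-order setting by addressing the two specific difficulties flagged in the discussion above: the non-commutativity of the naive $Z_0$ and the current state of the quantum coadjoint action for even $\ell$.

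First, I would define an appropriate commutative central subalgebra $Z_0 \subset \Uqg_\xi^L(\g)$ by replacing the offending $K_\gamma^\ro$ with their squares, or equivalently by restricting $\gamma$ to the sublattice on which $\ang{\gamma,\alpha_i}$ is always even for every $i$. A judicious choice of this type should give a central sub-Hopf algebra whose spectrum is an abelian group $\Gr$, together with a finite parity refinement recording the eigenvalues of the extra square roots. I would then take $\cat_{even}$ to be the category of finite-dimensional $\Uqg_\xi^L(\g)$-modules that are semi-simple over $Z_0$ and observe that it decomposes as $\cat_{even} = \bigoplus_{g\in\Gr}\cat_g$, paralleling Subsection~\ref{SS:weightmod}.

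Second, I would establish that $\cat_{even}$ is generically $\Gr$-semi-simple. The argument in Subsection~\ref{SS:SemiSimple} consists of a semi-simplicity statement for regular diagonal characters via Corollary~\ref{C:DensityThm} plus a density statement for the quantum coadjoint orbits. The first half should extend with only minor adjustments: the derivations $\underline{e}_i, \underline{f}_i$ of De~Concini--Kac survive specialization, and after checking the PBW-based dimension count of the highest-weight modules attached to a regular diagonal point one obtains semi-simplicity on the regular locus. The second half is the hard one: Theorem~6.1 of~\cite{DKP} on the density of regular $\wt G$-orbits is not available in the literature for arbitrary even $\ell$. Here I would draw on~\cite{Be} for $\ell\in 4\Z$ and, in the remaining cases, develop just enough of the even-order quantum coadjoint action to show that the complement of the regular orbits is small in the sense of Definition~\ref{D:genss}. \emph{This is the main obstacle in the whole plan.}

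Third, I would produce a right trace on $\Proj$ and verify self-duality of $\qd$. Existence and uniqueness (up to a scalar) of a nontrivial right trace follow from Corollary~3.2.1 of~\cite{GKP2} once one knows the projective cover of the trivial module is self-dual; the strategy for $\wb\cat_{\slt}$ in Section~\ref{SS:ccl} together with~\cite{FGST1, FGST2} indicates how to obtain this in the even case. For $\qd(V)=\qd(V^*)$ on a generic simple $V$, I would copy the open Hopf link argument of Subsection~\ref{SS:modifiedDim}: let $V_0$ be the module of highest weight $(\ro-1)\rho$ (which is self-dual), consider $f_V \in \End_\cat(V_0 \otimes V)$ built from the square of the $h$-adic braiding, and use Lemma~4(b) of~\cite{GPV} to write $\qd(V)=\qd(V_0)\ang{\ptr_R(f_V)}/\ang{\ptr_L(f_V)}$. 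The proofs of Lemma~\ref{L:PtrRfV} and Lemma~\ref{L:computingLeftptrf} go through almost verbatim: $\ptr_R(f_V)$ is the scalar $\ro^N$ on $V_0$, while $\ptr_L(f_V)$ comes from a specialized central element of $\Uqg_q$ invariant under the coadjoint action, so its value agrees with that on a nilpotent deformation and is computed as in~\cite{GP3, CGP3}. The resulting formula depends on the nilpotent highest weight $\wtb+(\ro-1)\rho$ through the even expression $\prod_{\alpha\in\roots^+} \qn{\ang{\wtb,\alpha}}/\qn{\ro\ang{\wtb,\alpha}}$, and since the dual has nilpotent deformation of weight $-\wtb+(\ro-1)\rho$, the equality $\qd(V)=\qd(V^*)$ follows. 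Theorem~\ref{T:GenericDimTrace} then upgrades the right trace to a two-sided trace on $\Proj$, and uniqueness up to a global scalar is inherited from uniqueness of the right trace; modulo the coadjoint density step, the remainder of the argument is formally parallel to the odd case, with care needed only for the signs introduced by the parity of $\ro$ in the pivotal element $\phi=K_{2\rho}^{1-\ro}$.
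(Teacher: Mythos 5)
The statement you are trying to prove is presented in the paper as a \emph{conjecture}: the authors offer no proof, and Section~\ref{S:ConjEven} exists precisely to record that the even-order case is open. Your proposal is a research program rather than a proof, and you say as much yourself. The plan you outline is essentially the one the paper implicitly suggests (fix the definition of $Z_0$ and of weight modules, establish generic $\Gr$-semi-simplicity, produce a right trace via \cite{GKP2}, verify $\qd(V)=\qd(V^*)$ by the open Hopf link computation, and invoke Theorem~\ref{T:GenericDimTrace}), so the architecture is sound and correctly identified. But the argument has at least two genuine gaps that prevent it from being a proof.

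First, the step you flag as ``the main obstacle'' really is one: the density of regular quantum coadjoint orbits (the analogue of Theorem~6.1 of \cite{DKP}) is not available for general even $\ell$, and without it you cannot conclude that the singular locus is small, so generic semi-simplicity --- the hypothesis of Theorem~\ref{T:GenericDimTrace} --- is unestablished. Saying you would ``develop just enough of the even-order quantum coadjoint action'' is naming the problem, not solving it. Second, the existence and uniqueness of the nontrivial right trace on $\Proj$ rests on the self-duality of the projective cover of the trivial module; in the paper this is known for $\slt$ (via \cite{FGST1, FGST2} in the even case and \cite{GKP2} in the odd case), but for a general Lie algebra $\g$ at an even-order root of unity it is not established, and your appeal to ``the strategy for $\wb\cat_{\slt}$'' does not supply it. There are also smaller unverified points: that your modified $Z_0$ (squares of the offending $K_\gamma^{\ro}$) is actually a central sub-Hopf algebra with well-behaved spectrum, and that the highest-weight dimension count feeding Corollary~\ref{C:DensityThm} goes through at even order, where $\ro=\ell/2$ and the representation theory of \cite{DK} does not directly apply. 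Until these are filled in, the statement remains a conjecture.
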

 \begin{conjecture}\label{Conj:smallTrace}
For even ordered $\xi$, there exists a category $\wb\cat_{even}$ of modules over $ \wb\Uqg_\xi^L(\g)$ such that there exists a unique (up to global scalar) two sided trace on the ideal of projective modules of $\wb\cat_{even}$.
 \end{conjecture}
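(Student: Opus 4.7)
The plan is to mirror the two-step argument used in the odd case by the corollary to Theorem~\ref{T:RightTraceIsTrace}: first establish the existence of a (unique up to scalar) trace on $\Proj$ inside a suitable category $\cat_{even}$ of weight modules over $\Uqg_\xi^L$, and then induce the trace on projective $\wb\Uqg_\xi^L$-modules via the Hopf quotient $\Uqg_\xi^L\to \wb\Uqg_\xi^L$. Concretely, once $\cat_{even}$ is constructed and equipped with a trace, any projective $\wb\Uqg_\xi^L$-weight module pulls back to a projective $\Uqg_\xi^L$-weight module, so restriction of the trace along the quotient supplies the required trace on $\wb\cat_{even}$. Uniqueness up to a global scalar reduces to uniqueness of the right trace, which in turn follows from self-duality of the projective cover of the trivial module (to be checked directly using duality on generators in the spirit of \cite{FGST1, FGST2} and Section~\ref{SS:ccl}).

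The definitional first step is to fix the meaning of ``weight module'' when $\ell$ is even. Since the algebra generated by $\{X_{\pm\beta}^r, K_\gamma^r\}$ is not commutative for even $\ell$, one should take as $Z_0$ the sub-Hopf algebra generated by $\{X_{\pm\beta}^r, K_\gamma^{2r}\}$, which does lie in the centre of $\Uqg_\xi^L$, and define a weight module to be a finite dimensional module whose restriction to $Z_0$ is semisimple. The $\Z/2\Z$-sign ambiguity of $K_\beta^r$ then refines the $\Gr$-grading as in the $\slt$ treatment of Section~\ref{SS:ccl}, and $\wb\cat_{even}$ appears naturally as $\bigoplus_{g\in \Gr_0}\cat_g$ for the finite subgroup $\Gr_0\subset\Gr$ defined by $g(X_{\pm\beta}^r)=0$ and $g(K_\beta^{2r})=1$.

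The heart of the proof is then to verify the two hypotheses of Theorem~\ref{T:GenericDimTrace} for $\cat_{even}$: generic $\Gr$-semisimplicity and $\qd(V)=\qd(V^*)$ on generic simples. For semisimplicity one would imitate Subsection~\ref{SS:SemiSimple}: an appropriate dimension count for simple highest weight modules at regular diagonal $g$, combined with the PBW theorem and Corollary~\ref{C:DensityThm}, should force $\cat_g$ to be semisimple; then an even-order analogue of the quantum coadjoint action, extending the work of Beck \cite{Be} from $\ell\in 4\Z$ to all even $\ell$, would transport this to a Zariski-dense open locus $\Gr\setminus\X$. For the modified-dimension identity one would construct the even analogue of the self-dual module $V_0$ with highest weight $(r-1)\rho$, form the endomorphism $f_V$ via the truncated braiding of $\Uhg$ as in Subsection~\ref{SS:modifiedDim}, and derive a product formula for $\qd(V)$ indexed over $\roots^+$ whose manifest invariance under $\wtb\mapsto -\wtb$ yields $\qd(V)=\qd(V^*)$.

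The principal obstacle is unquestionably the even-order quantum coadjoint action: the derivations of De Concini--Kac and the orbit decomposition of Theorem~6.1(d) of \cite{DKP} are exactly the tools guaranteeing that every generic quotient $\Uqg_\xi^L/I_g$ is outer-isomorphic to one attached to a regular diagonal point, and no clean replacement currently exists in the even setting outside $\ell\in 4\Z$. A plausible workaround, patterned on Section~\ref{SS:ccl}, is to use central elements specialized from $\Uqg_q$ (the Casimir and its higher-rank analogues) to detect orbits indirectly: these are $\wt G$-invariant and should separate the generic locus from the singular one, so one can take $\X$ to be cut out by their joint vanishing behaviour and bypass the explicit construction of the coadjoint action. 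If this substitution succeeds, the remainder is a formal application of Theorem~\ref{T:GenericDimTrace} followed by the quotient argument above.
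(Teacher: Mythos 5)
This statement is a \emph{conjecture} in the paper: the authors give no proof, and Section~\ref{S:ConjEven} explicitly lists the obstructions that prevent one. Your proposal is a sensible research program that matches the strategy the authors themselves suggest (define weight modules, verify the hypotheses of Theorem~\ref{T:GenericDimTrace} for $\cat_{even}$, then push the trace down through $\Uqg_\xi^L\to\wb\Uqg_\xi^L$ as in the odd-case corollary), but it is not a proof, and the places where it says ``should'' are precisely the open problems.

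Concretely, three inputs your argument needs are only known for odd $\ell$ (or for $\slt$). First, generic semisimplicity of $\cat_g$ rests on Corollary~3.2 of \cite{DK} (generic simples have dimension $r^N$) together with Theorem~6.1(d) of \cite{DKP} (every orbit meeting the dense locus contains a regular diagonal point); both are odd-order results, and \cite{Be} covers only $\ell\in4\Z$ and does not supply the orbit statement. Second, the formula $\qd(V)=\qd(V^*)$ in the odd case is obtained by transporting the computation of $\ptr_L(f_V)$ from $V$ to a \emph{nilpotent deformation} via an outer automorphism in $\wt G$ acting trivially on the specialized center --- again the coadjoint action. Your proposed workaround (detecting orbits by specialized central elements) is exactly what the paper does for $\slt$, where the center is generated by $Z_0$ and a single Casimir satisfying an explicit Chebyshev relation; for higher rank you would need to prove that the specialized Harish-Chandra center separates the generic locus \emph{and} that every generic $g$ actually lies in the orbit of a regular diagonal point, neither of which follows from invariance of central elements alone. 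Third, uniqueness of the right trace on $\Proj$ requires self-duality of the projective cover of the trivial module (Theorem~4.7.1 / Corollary~3.2.1 of \cite{GKP2}), established only for odd $\ell$ in general rank and for $\slt$ via \cite{FGST1,FGST2} in the even case. Until these gaps are closed the statement remains a conjecture, which is how the paper presents it.
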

 
 The conjectures in this section are motivated by applications in
 low-dimensional topology.  In particular, when $\ell=4$  and $\g=\slt$ in
 \cite{BCGP} it is shown that the closed 3-manifold invariant of
 \cite{CGP1} associated to the category $\cat_{\slt}^H$ are a
 canonical normalization of Reidemeister torsion defined by Turaev
 which gives rise to a Topological Quantum Field Theory (TQFT).  It
 would be interesting to see what properties the analogous topological
 invariants have for other Lie algebras 
 at similar level.
 The first step
 in defining such invariants is a proof of Conjecture
 \ref{Conj:catHRibbon}.  Also, Conjecture \ref{Conj:smallTrace} would
 also be interesting in generalizing the work of \cite{BBG}.

\linespread{1}


\begin{thebibliography}{99}

\bibitem{BW} J. Barrett, B.  Westbury - {\em Spherical categories,} {
    Adv. Math.}  \textbf{143} (1999), 357--375.

\bibitem{Be} J. Beck - {\em Representations of quantum groups at even
    roots of unity} J. Algebra 167 (1994), no. 1, 29--56.
    
\bibitem{BBG} A. Beliakova, C. Blanchet, N. Geer - {\em
    Logarithmic Henning invariants for restricted quantum $sl(2)$}, in
  preparation.
    
\bibitem{Bi} J. Bichon - {\em Cosovereign Hopf algebras}, J. Pure
  Appl. Algebra 157, No.2-3 (2001), 121--133.
  
  \bibitem{BCGP} Blanchet, C., Costantino, F., Geer, N., and Patureau-Mirand, B., 
 Non-semi-simple TQFTs, Reidemeister torsion and Kashaev's invariants. \emph{Advances in Mathematics} 301 (2016), 1--78.
  
\bibitem{BGPR} C. Blanchet, N. Geer, B. Patureau-Mirand,
  N. Reshetikhin, - {\em Holonomic ribbon categories and their link
    invariants}, in preparation.

\bibitem{CP} V. Chari, A. Pressley, A guide to quantum groups, Cambridge
  University Press, Cambridge, 1994. 
  
\bibitem{CGP1} F.  Costantino, N. Geer, B. Patureau-Mirand - {\em
    Quantum invariants of 3-manifolds via link surgery presentations
    and non-semi-simple categories,} Journal of Topology (2014) 7 (4)
  1005--1053.

\bibitem{CGP3} F. Costantino, N. Geer, B. Patureau-Mirand - {\em Some
    remarks on the unrolled quantum group of $sl(2)$,}, J. Pure
  Appl. Algebra 219 (2015), no. 8, 3238--3262.
   
\bibitem{DK} C. De Concini, V.G. Kac - {\em Representations of quantum groups
    at roots of $1$.} In {\em Operator algebras, unitary representations,
    enveloping algebras, and invariant theory.} (Paris, 1989), 471--506,
  Progr. Math., 92, Birkhauser Boston, 1990.

\bibitem{DKP} C. De Concini, V.G. Kac, C. Procesi - {\em Quantum coadjoint
    action.}  J. Amer. Math. Soc. 5 (1992), no. 1, 151--189.

\bibitem{DKP2} C. De Concini, V.G. Kac, C. Procesi - {\em Some remarkable
    degenerations of quantum groups.} Comm. Math. Phys. 157
  (1993), no. 2, 405--427.

\bibitem{DPRR} C. De Concini, C. Procesi, N. Reshetikhin, M. Rosso - {\em Hopf
    algebras with trace and representations.} Invent. Math. 161 (2005), no. 1,
  1--44.
  
\bibitem{EtGHLSVY} P. Etingof, O. Golberg, S. Hensel, T. Liu,
  A. Schwendner, D. Vaintrob, E. Yudovina,- {\em Introduction to
    representation theory,} Stud. Math. Libr., 59, Amer. Math. Soc.,
  Providence, RI, 2011.

\bibitem{FGST1} B.L. Fe{\u\i}gin, A.M.Ga{\u\i}nutdinov,
  A.M. Semikhatov, I.Y. Tipunin - {\em Modular group representations
    and fusion in logarithmic conformal field theories and in the
    quantum group center.}  Comm. Math. Phys. 06 (2006); 265 (1),
  47--93.


\bibitem{FGST2} B.L. Fe{\u\i}gin, A.M.Ga{\u\i}nutdinov,
  A.M. Semikhatov, I.Y. Tipunin - {\em Kazhdan-Lusztig correspondence
    for the representation category of the triplet W-algebra in
    logarithmic CFT}, Theor Math Phys (2006) 148 (3), 1210--1235. 

\bibitem{GKP1} N. Geer, J. Kujawa, B. Patureau-Mirand - {\em
    Generalized trace and modified dimension functions on ribbon
    categories } Selecta Mathematica, New Series 17, Issue 2 (2011)
  453--504.
  
\bibitem{GKP2} N. Geer, J. Kujawa, B. Patureau-Mirand - {\em
    Ambidextrous objects and trace functions for nonsemisimple
    categories,} Proceedings of the American Mathematical Society,
  \textbf{ 141} (2013), no. 9, 2963--2978.

\bibitem{GP3} N. Geer, B. Patureau-Mirand - {\em Topological
    invariants from non-restricted quantum groups.}  Algebraic \&
  Geometric Topology 13 (2013) 3305--3363.

\bibitem{GPT1} N. Geer, B. Patureau-Mirand, V. Turaev - {\em Modified quantum
    dimensions and re-normalized link invariants.} Compos. Math.  \textbf{145}
  (2009), no. 1, 196--212.

\bibitem{GPT2} N. Geer, B. Patureau-Mirand, V. Turaev - {\em Modified
    6j-Symbols and 3--manifold invariants.}  Advances in Mathematics
  Volume 228, Issue 2, (2011), 1163--1202.

\bibitem{GPV} N. Geer, B.  Patureau-Mirand, A. Virelizier, -
  {\em Traces on ideals in pivotal categories. } {Quantum Topology},
  \textbf{4} (2013), no. 1, 91--124.

\bibitem{Ha} N.P. Ha - {\em Topological invariants from quantum group
    $U_q\sll(2|1)$ at root of unity} preprint (2016), arXiv:0903.4512.

\bibitem{Ma} S. Majid - {\em Foundations Of Quantum Group Theory},
  Cambridge, UK. Univ. Pr. (1995).

\bibitem{Mur08} J. Murakami - {\em Colored Alexander invariants and
    cone-manifolds,} Osaka J. Math. \textbf{45} (2008), 541--564.

\bibitem{O} T. Ohtsuki - {\em Quantum invariants. A study of knots,
    3-manifolds, and their sets.} Series on Knots and Everything,
  \textbf{29}. World Scientific Publishing Co., Inc., River Edge, NJ,
  2002.

\bibitem{Tu} V.G. Turaev - {\em Quantum invariants of knots and 3-manifolds.}
  de Gruyter Studies in Mathematics, 18. Walter de Gruyter \& Co., Berlin,
  (1994).
  
\end{thebibliography}
\end{document}